\documentclass[11pt]{article}
\usepackage{booktabs}
\usepackage{longtable}
\usepackage{array}
\usepackage{graphicx, color}
\usepackage[rgb]{xcolor} 
\usepackage[T1]{fontenc}
\usepackage[utf8]{inputenc}
\usepackage{lmodern}
\usepackage{amsmath,amsfonts,amssymb}
\usepackage[unicode=true]{hyperref}
\def\forall{\hbox{for all}~}
\def\L{\mathbf{L}}

\def\ve{\varepsilon}

\def\D{{\cal D}}

\def\T{{\cal T}}

\def\R{{\mathbb R}}

\def\implies{\Longrightarrow}

\def\vs{\vskip 2em}

\def\v{\vskip 1em}
\def\G{{\cal G}}

\def\O{{\cal O}}
\def\C{{\cal C}}
\def\U{{\cal U}}

\def\ov{\overline}
\def\Tilde{\widetilde}
\def\Hat{\widehat}

\def\bega{\begin{array}}
\def\enda{\end{array}}
\def\begi{\begin{itemize}}
\def\endi{\end{itemize}}

\def\ds{\displaystyle}
\def\bel{\begin{equation}\label}
\def\eeq{\end{equation}}
\def\sqr#1#2{\vbox{\hrule height .#2pt
\hbox{\vrule width .#2pt height #1pt \kern #1pt
\vrule width .#2pt}\hrule height .#2pt }}
\def\square{\sqr74}
\def\endproof{\hphantom{MM}\hfill\llap{$\square$}\goodbreak}

    \definecolor{darkred}{RGB}{180,0,0}
    \definecolor{darkblue}{RGB}{0,0,180}
    \definecolor{darkgreen}{RGB}{0,100,0}
    \definecolor{darkorange}{RGB}{200,100,0}
    \definecolor{darkpurple}{RGB}{128,0,128}

\newtheorem{theorem}{Theorem}[section]

\newtheorem{lemma}{Lemma}[section]

\newtheorem{remark}{Remark}[section]
\newtheorem{definition}{Definition}[section]

\newcommand{\abs}[1]{\left| #1 \right|}

\begin{document}

\title{\bf  Local Asymptotic Patterns for  Viscous Approximations of Conservation Laws}
\vs

\author{Alberto Bressan$^{(*)}$, Laura Caravenna$^{(**)}$  and
Wen Shen$^{(*)}$\\~~\\
 {\small $^{(*)}$~Department of Mathematics, Penn State University,}
 \\
 {\small $^{(**)}$~Department of Mathematics 
 ``Tullio Levi-Civita", University of Padova,}\\~~\\
{\small E-mails: axb62@psu.edu,~~laura.caravenna@unipd.it, ~wxs27@psu.edu}
}
\maketitle
{\bf Key Words:} Hyperbolic conservation laws, generic singularity, local asymptotic approximation.

\begin{abstract} Solutions to hyperbolic conservation laws can be approximated in many different ways:
by vanishing viscosity, relaxations, discrete or semi-discrete numerical schemes, 
approximation with a nonlocal flux, etc$\ldots$ 
For some of these methods, general ${\bf L}^1$ convergence results are available. 
Aim of this paper is to understand the local behavior of these approximations,
in a neighborhood of point where the hyperbolic solution has a  singularity.   Specifically: a point along a shock, or where two shocks interact, or where a new shock is formed.  

Given a sequence of $\epsilon$-approximate solutions, 
a general expectation is that, by a suitable local rescaling of coordinates,
as $\epsilon\to 0$ a well defined limit is obtained. This corresponds to a specific ``eternal solution" (globally defined both in space and in time)
to the approximating equation.
Precise results this direction are here given, in the case of vanishing viscosity.
\end{abstract}

\tableofcontents

\section{Introduction}
\label{sec:1}
\setcounter{equation}{0}
The emergence of singularities in solutions to PDEs has been a topic of extensive study.
For various classes of nonlinear hyperbolic equations, solutions can be uniquely extended in time
beyond singularity formation. In such cases, it is of interest to identify ``structurally stable singularities",
which arise from an open set of initial data (in a suitable topology), and are thus more likely to be
observed in a physical context.  For some particular classes of PDEs, such as scalar conservation laws,
variational wave equations, or Camassa-Holm equations, a classification of generic
singularities can be found in \cite{Gk}, \cite{BC17, BHY} and \cite{LZ}, respectively.
A similar approach is of interest also in connection with multidimensional Hamilton-Jacobi equations.

Typically, this classification corresponds to a limit of local rescalings.  
Namely, if a solution $u(t,x)$ has a singularity at a point $(\tau, \xi)$, 
 one can consider
the rescaled functions
\bel{resc1}u^\delta(t,x)~=~u\bigl(\tau+ \delta t, \xi+\delta x\bigr),\eeq
or, depending on the type of singularity,
\bel{resc2}
u^\delta(t,x)~=~\delta^{-\gamma}  \Big[  u\bigl(\tau + \delta^\alpha t , \,\xi + \delta^\beta x\bigr)- u(\tau, \xi)\Big],\eeq
for suitable exponents $\alpha,\beta,\gamma$.   
In a favorable case, taking the limit as $\delta\to 0$,  one obtains a self-similar function, depending on a finite number of parameters, which describes the asymptotic behavior of the solution in a neighborhood of the singular point $(\tau, \xi)$.

Purpose of the present paper is to point out that, in many cases, this classification of generic singularities
can be carried out on a further level, involving various classes of approximate solutions.

As an example,
consider a  scalar conservation law with smooth, convex flux
\bel{1}
u_t+ f(u)_x~=~0,\qquad\qquad t\in [0,T].\eeq
For a generic  initial data
\bel{2} u(0,x)~=~\bar u(x),\eeq
with $\bar u$ in the intersection of countably many open dense subsets of $\C^3(\R)$, 
it is well known \cite{Gk, S} that the unique entropy weak solution is piecewise smooth, with finitely many shock curves.
Structurally stable singular points can be of three types (see Fig.~\ref{f:z200}).
\begi
\item[(I)] points along a shock curve,
\item[(II)]  points where two shock curves merge,
\item[(III)]  points where a new shock is formed.
\endi

Various approximations of (\ref{1})  have been extensively studied in the literature.
In particular:
\begi
\item[(i)] Vanishing viscosity approximations \cite{Dafermos, HR, Kr}, such as
\bel{vclaw}
u_t + f(u)_x~=~\ve\bigl(b(u) u_x\bigr)_x\,.\eeq
\item[(ii)] Relaxation approximations \cite{KT, Liu, VH, Y}. For example
\bel{R1}\left\{\bega{cl} 
v_t + v_x &=\, \ve^{-1} \bigl(w-g(v)\bigr), \\[2mm] w_t&= -\ve^{-1}\bigl(w-g(v)\bigr),\enda
\right. 
\eeq
where $g'(v)>0$ and the function $v=f(u)$ is the inverse of $u=v+g(v)$.
\item[(iii)] Approximations with nonlocal flux. More specifically, motivated by models of traffic flow
\cite{BS2, CCS, KP}, one can consider
\bel{nloc}u_t + F(u,v)_x~=~0,\qquad\qquad v(t,x)\doteq\, \int_0^{+\infty} \ve^{-1}e^{-y/\ve} \,u(t, x+y)\,dy,\eeq
where $F(u,u)=f(u)$.\item[(iv)] Various numerical approximations \cite{HR, LV}.
\endi
Let now $u=u(t,x)$ be a generic solution of (\ref{1}) with  $\C^3$ initial data.
In addition, let $u_{\ve}$ be a family of approximations of  one of the types (i)--(iv),
such that the initial data $u_{\ve}(0,\cdot)$ converge to $u(0,\cdot)$ in $\C^3$, as $\ve\to 0$.

If $(\tau, \xi)$ is a point where $u$ has a structurally stable singularity, as already remarked
the rescaled functions in (\ref{resc1}) or (\ref{resc2}) converge to a well defined limit as $\delta\to 0$.
In this same setting we conjecture that, by suitably choosing 
 points $(\tau_\ve, \xi_\ve)\to (\tau, \xi)$, for suitable exponents $\alpha,\beta,\gamma$ 
(not necessarily the same as in (\ref{resc2})), the rescaled
functions
\bel{Uep1}U^{\ve}(t,x)~\doteq~ u_\ve\bigl(\tau+ \ve^\alpha t,~ \xi+\ve^\beta x\bigr),\eeq
or, depending on the type of singularity,
\bel{Uep2}
U^{\ve}(t,x)~\doteq~ \ve^{-\gamma} \Big[  u_{\ve}\bigl(\tau_\ve+ \ve^\alpha t,\,\xi_\ve+ \ve^\beta x\bigr)- u(\tau, \xi)\Big],\eeq
converge to a unique limit $U=U(t,x)$, uniformly on compact sets.   This limit is defined for all $(t,x)\in \R^2$. It provides an ``eternal
solution" (i.e, with time ranging from $-\infty$ to $+\infty$)  to an equation of the same type as 
(\ref{vclaw}), (\ref{R1}) or (\ref{nloc}), but with $\ve=1$.  The set of all these limits  depends
on a finite number of parameters.

In the remainder of this paper we shall give a proof of the above conjecture in the basic case of 
viscous approximations to a scalar conservation law with strictly convex flux:
\bel{viscep}
u_t + f(u)_x~=~\ve\, u_{xx}\,.\eeq
The following assumptions will be used.
\begi
\item[{\bf (A1)}] 
{\it The flux  function $f$ is smooth and uniformly convex:
\bel{fuc}
0~<~c_1~\leq~f''(u)~\leq~c_2\qquad\qquad\forall u\in\R.\eeq
}
\item[{\bf (A2)}] 
{\it The initial data satisfies $\bar u\in \L^1(\R)$.} 
\endi

We recall that, for a generic initial data $\bar u\in  \U\subset \C^3(\R)$,  the solution of (\ref{1})-(\ref{2})
is piecewise smooth with a locally  finite number of shocks \cite{GS, Gk,S}. 
Moreover,
shocks interact at most two at a time.
Here $ \U$ is a dense $\G_\delta$-set, i.e., the intersection of countably 
many open dense subsets of $\C^3(\R)$.

\begin{figure}[ht]
\centerline{\hbox{\includegraphics[width=5cm]{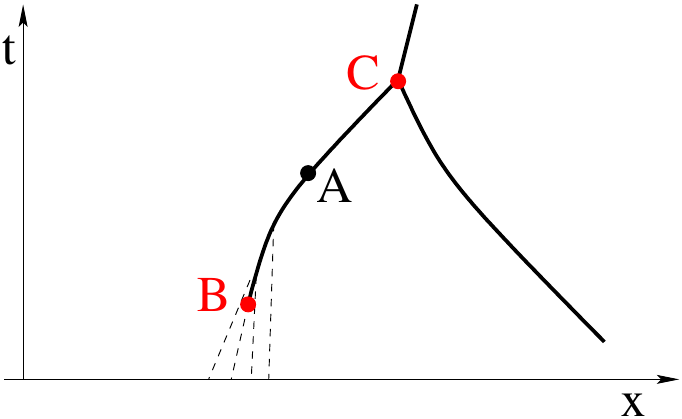}}}
\caption{\small  The three types of generic singularities for a solution to a conservation law.
A: a point along a shock.   B: a point where a new shock forms.  C: a point where two shocks merge. Here the dashed lines denote characteristics, while the solid curves denote shocks. }
\label{f:z200}
\end{figure}

For such generic solutions, singularities can only be of the above types (I)--(III).
For each type, there is a natural asymptotic rescaling that 
characterizes  the singularity up to leading order.
\v
{\bf CASE 1.}
If $(\tau,\xi)$ is a  point  along a shock, we consider the rescalings
\bel{rescd}
u^\delta(t,x)~\doteq~u\bigl(\tau+ \delta t,\, \xi+ \delta x\bigr).\eeq
As  $\delta\to 0+$, the solutions $u^\delta$ converge in $\L^1_{loc}(\R^2)$ to a
piecewise constant solution $U=U(t,x)$ of (\ref{1})
defined for all $(t,x)\in \R^2$ and containing a single shock:
\bel{sshock}U(t,x)~=~\left\{ \bega{rl} u^-\quad &\hbox{if}~~x<\lambda t,\\[1mm]
u^+\quad &\hbox{if}~~x>\lambda t.\enda\right.\eeq
Here
$$u^\pm\doteq u(\tau, \xi\pm),\qquad \qquad\lambda = {f(u^+)- f(u^-)\over u^+-u^-}\,.$$
\v
{\bf CASE 2.}
If $(\tau,\xi)$ is a  point  where two shocks interact,  the rescaled functions  $u^\delta$  defined at (\ref{rescd})
 now converge to a piecewise constant solution with two interacting shocks
\bel{2shock}U(t,x)~=~\left\{ \bega{cl} u^-\quad &\hbox{if}~~x<\lambda_1 t,\\[1mm]
u^*\quad &\hbox{if}~~\lambda_1t<x< \lambda_2 t,
\\[1mm]
u^+\quad &\hbox{if}~~\lambda_2t<x,
\enda\right.\qquad 
\qquad\hbox{for} ~t< 0,\eeq

\bel{3shock}U(t,x)~=~\left\{ \bega{rl} u^-\quad &\hbox{if}~~x<\lambda t,\\[1mm]
u^+\quad &\hbox{if}~~x>\lambda t,\enda\right.\qquad 
\qquad\hbox{for} ~t\geq 0.\eeq

Here $u^-, u^*, u^+$ are the limits of left, middle and right states before the interaction, while
$\lambda_1>\lambda_2$ and $\lambda$ are the Rankine-Hugoniot speeds of the shocks, 
before and after the interaction.
\v
{\bf CASE 3.} At the point $\ov P=(\tau,\xi)$ a new shock is formed. 
Assuming that the initial data $\bar u$ is $\C^3$, call 
$$x_0\,\doteq\,\xi- f'\bigl(u(\tau,\xi)\bigr)\, \tau,$$
 the initial point of the characteristic
through $\ov P$, so that
$$\bar u(x_0)~=~u(\tau,\xi),\qquad\qquad  \tau\,=\, {1\over f''\bigl( \bar u(x_0)\bigr)\bar u_x(x_0)}\,. $$
Following \cite{Gk}, we say that this singularity is {\bf structurally stable} if 
$${\partial\over\partial x}  f'\bigl(\bar u(x)\bigr)\bigg|_{x=x_0} =~0,
\qquad\qquad {\partial^2\over\partial x^2}  f'\bigl(\bar u(x)\bigr)\bigg|_{x=x_0} <~0.$$
In this case, the rescaled functions
\bel{ud3}u^\delta(t,x)~=~\delta^{-1} \Big[ u\bigl(\tau+ \delta^2 t, \,\xi+ \delta^3 x\bigr) - u(\tau,\xi) \Big],
\eeq
as $\delta\to 0$  converge to a global solution of Burgers' equation 
\bel{Bur} u_t+ \left({a u^2\over 2} + bu\right)_x~=~0,\eeq
with
\bel{Bid}u(0,x)~=~-c x^{1/3}.\eeq
\v
Here $a= f''(u(\tau,\xi))$, $b= f'(u(\tau,\xi))$, while $c$ can be recovered in terms of the third derivative $x_{uuu}$ 
of the inverse 
function
$u\mapsto x(\tau,u)$, at the point $u=u(\tau,\xi)$.
The proof is elementary, since for $t<\tau$ in a neighborhood of $(\tau,\xi)$ the solution of (\ref{1})
can be explicitly constructed by characteristics.
\begin{remark}\label{r:11}{\rm
All solutions to (\ref{Bur})-(\ref{Bid}) can be obtained from the single solution where
$a=c=1$ and $b=0$, after an affine change of variables. Indeed, setting
$$u(t,x)~=~\gamma\, w(\alpha t, \, x-\beta t),\qquad\qquad \alpha = ac, \quad\beta = b,\quad \gamma = c,$$
a direct computation shows that $w$ provides a solution to 
$$w_t + \left(w^2\over 2\right)_x\,=\,0,\qquad\qquad w(0,x)~=~-x^{1/3}.$$
}
\end{remark}
Next, we consider smooth functions which are obtained as limits of suitable asymptotic rescalings
of the viscous solutions (\ref{viscep}).
These can be of three types.

{\bf TYPE 1:}  A traveling viscous shock for the viscous equation
\bel{visc1} u_t + f(u)_x~=~u_{xx}\,,\eeq
with asymptotic states $u^-, u^+$ as $x\to\pm \infty$.
As it is well known, this has the form
\bel{UtwS}U(t,x)~=~S(x-\lambda t),\eeq   where $S$ provides a solution to
\bel{SODE}  -\lambda S+ f(S) ~=~S' + C,\eeq
\bel{lc}\lambda~=~{f(u^-)-f(u^+)\over u^--u^+} \,,\qquad\qquad 
C\,=\,f(u^+)-\lambda u^+\,=\, f(u^-)-\lambda u^-.\eeq
Notice that $S$ is determined up to a shift.   To remove this ambiguity, we 
impose the additional condition
\bel{S0}S(0)~=~{u^-+u^+\over 2}\,.\eeq
\v
{\bf TYPE 2:}  Two traveling viscous shocks merging together.
 Let $S_1, S_2$ be two viscous traveling profiles for 
the equation (\ref{visc1}), connecting the states $(u^-, u^*)$ and the states $(u^*, u^+)$, 
respectively (see Fig.~\ref{f:asy4}). These profiles are uniquely determined up to a shift.  
To remove this ambiguity we assume
$$S_1(0)\,=\, {u^-+u^*\over 2}\,,\qquad\qquad S_2(0)\,=\, {u^*+u^+\over 2}\,.$$
The corresponding Rankine-Hugoniot speeds are
$$\lambda_1~=~{f(u^-) - f(u^*)\over u^--u^*}~>~
 {f(u^*) - f(u^+)\over u^*-u^+}~=~\lambda_2\,.$$
 As it will be shown in Section~\ref{sec:8},
there exists a unique solution  $W=W(t,x)$ of  (\ref{visc1}),
defined for all 
$(t,x)\in \R^2$, with the following property. Choosing any intermediate speed 
$\lambda_2<\lambda<\lambda_1$,  as $\tau\to -\infty$ the profile $W(\tau,\cdot)$ satisfies
\bel{W12} \lim_{\tau\to -\infty} \left\{ \int_{-\infty}^{\lambda \tau} \Big|
W(\tau,x) - S_1(x - \lambda_1\tau)\Big| \, dx + \int^{+\infty}_{\lambda \tau} \Big|
W(\tau,x) - S_2(x - \lambda_2\tau)\Big| \, dx 
\right\}~=~0.\eeq
\v
{\bf TYPE 3:}  To describe the appropriate rescaled limit of viscous approximations, in a neighborhood where a new shock is formed, we proceed as follows.   
Let $z=z(t,x)$ be the solution to the inviscid Burgers' equation
\bel{Burv} z_t + zz_x~=~0\,,\qquad\qquad z(0,x)=-x^{1/3}.\eeq
This is well defined for all $(t,x)\in \R^2$, and smooth for $t<0$. Then we
define $Z^{(n)}:[-n, +\infty[\,\times \R\mapsto \R$ to be the solution of 
the Cauchy problem for the viscous Burgers' equation
\bel{CPn}Z_t + Z Z_x~=~Z_{xx}, \qquad \qquad Z(-n,x)\,=\,z(-n,x).\eeq
Finally, we take the limit 
\bel{Zlim}
Z(t,x)~=~\lim_{n\to \infty} Z^{(n)}(t,x).\eeq 
Details about the existence, uniqueness, and some properties of these limit solutions will be given
 in Section~\ref{sec:8}. 
 
\begin{figure}[ht]
\centerline{\hbox{\includegraphics[width=16cm]{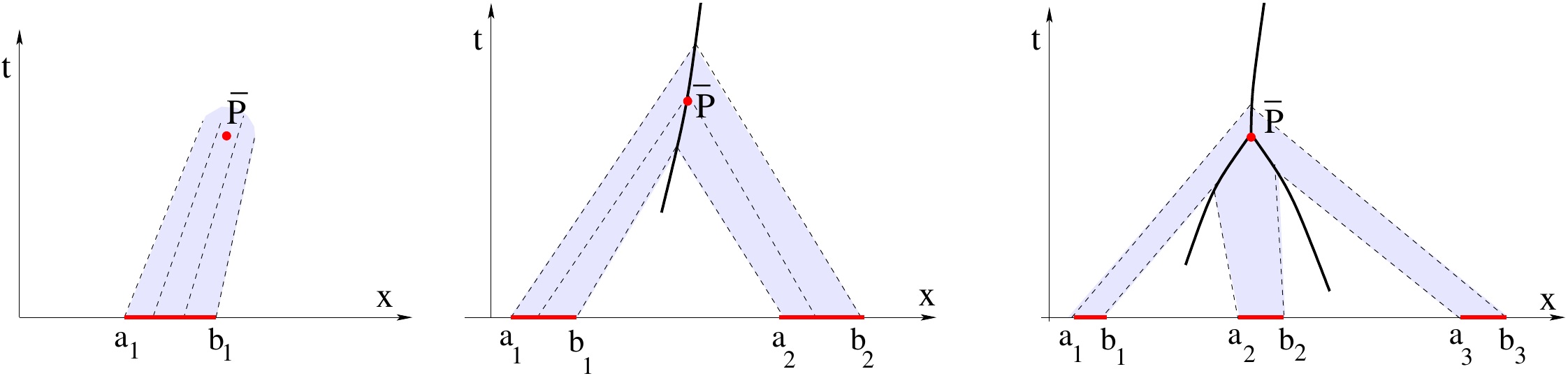}}}
\caption{\small  Three examples of backward domains for a point $\ov P$.  
Left: a point where the solution $u=u(t,x)$
is continuous. Center: a point where $u$ has a shock. Right: a point where two shocks merge.}
\label{f:asy55}
\end{figure}

\begin{definition}\label{backdom} Let $u=u(t,x)$ be a solution to the inviscid equation (\ref{1}).  We say that $\Omega\subset\R^2$
is a {\bf backward domain} for the point $\ov P=(\tau,\xi)$ if $\Omega$ contains all  backward characteristics starting from a neighborhood of $\ov P$.
\end{definition}
Depending on whether the solution $u$ either (i) is 
continuous at $\ov P$, or (ii) has a shock at $\ov P$, or (iii)  has two shocks merging at $\ov P$,
the set of initial points
\bel{Om0}\Omega_0~\doteq~\bigl\{ x\,;~~(0,x)\in \Omega\bigr\}\eeq
may consist of one, two, or three disjoint intervals $[a_i, b_i]$, see Fig.~\ref{f:asy55}.

In the above setting, our first main result is

\begin{theorem}\label{t:1} Let {\bf (A1)-(A2)} hold, and 
let $u=u(t,x)$ be an entropy weak solution of the conservation law (\ref{1})-(\ref{2}) which is piecewise
Lipschitz continuous in a neighborhood of a point $\ov P=(\tau,\xi)$. Let $\Omega$ be a backward domain for 
$\ov P$ and assume $\bar u\in \C^1(\Omega_0)$.

Let $(u_\ve)_{\ve>0}$ be a family of solutions to the viscous equation (\ref{viscep}), whose initial data satisfy
\bel{uelim1}
\bigl\| u_\ve(0,\cdot) - \bar u\bigr\|_{\L^1(\R)}~\to~0,\qquad\quad
\bigl\| u_\ve(0,\cdot) - \bar u\bigr\|_{\C^1(\Omega_0)}~\to~0\qquad\qquad\hbox{as}\quad \ve\to 0.\eeq
Then, as $\ve\to 0$ the following holds.
\begi
\item[(i)] If $(\tau,\xi)$ is a point where $u$ has a single shock, then there exist  points 
$(\tau_\ve,\xi_\ve)\to (\tau,\xi)$ such that the rescaled functions 
 \bel{resc11}U^\ve(t,x)~\doteq~u_\ve\bigl( \tau_\ve + \ve t,~\xi_\ve+ \ve x\bigr)\eeq
 converge to the 
 viscous traveling wave $U(t,x)=S(x-\lambda t)$ in (\ref{UtwS})--(\ref{S0}), uniformly on bounded subsets of $\R^2$.
 
 \item[(ii)] If $(\tau,\xi)$ is a point where $u$ has two interacting shocks, then there exist points 
 $(\tau_\ve,\xi_\ve)\to (\tau,\xi)$ such that the rescaled functions $U^\ve$ in (\ref{resc11}) converge  to the 
globally defined viscous solution $W$ satisfying (\ref{W12}),
uniformly on bounded sets.
\endi
\end{theorem}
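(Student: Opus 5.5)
The proof will rest on an intermediate-scale matching: at a scale $\ve\ll\delta(\ve)\ll1$, the viscous solution should already look like the inviscid solution (a shock or two shocks between nearly constant states), while after rescaling by $\ve$ this becomes an initial condition at very negative rescaled time for the viscous equation (\ref{visc1}) with $\ve=1$. The characterization of the limits as attractors/eternal solutions then fixes the limit.

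\textbf{Step 1 (outer convergence).} Using (A1), (A2), (\ref{uelim1}) and the $\L^1$ contraction for (\ref{viscep}), $u_\ve\to u$ in $\L^1_{loc}$. On the backward domain $\Omega$, the $\C^1$ convergence of the data on $\Omega_0$ combined with comparison and characteristic estimates gives local uniform convergence $u_\ve\to u$ away from the shock curves inside $\Omega$. Here one needs $\ve$-independent bounds on $u_{\ve,x}$ along the smooth regions, which follow from the Oleinik-type one-sided bound $u_{\ve,x}\le 1/(c_1 t)$ together with a maximum principle for $u_x$ on the trapezoids bounded by characteristics. Near a shock, the standard stability of viscous shock profiles for convex scalar laws (Il'in–Oleinik type) then shows that on a strip of width $\delta$ around the shock $x=\gamma(t)$, the solution is close in $\L^1$ to $S\bigl((x-\gamma_\ve(t))/\ve\bigr)$ for some shift $\gamma_\ve(t)$, with $\gamma_\ve\to\gamma$ uniformly.

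\textbf{Step 2 (choice of $(\tau_\ve,\xi_\ve)$ and compactness).} In case (i) choose $\tau_\ve=\tau$ and $\xi_\ve$ the point where $u_\ve(\tau,\cdot)=(u^-_\ve+u^+_\ve)/2$ near $\xi$. This point exists and is unique near $\xi$ because $u_\ve$ is monotone there for small $\ve$, and this choice matches the normalization (\ref{S0}). The rescaled $U^\ve$ solve (\ref{visc1}) with locally bounded data, so parabolic estimates give equicontinuity and a locally uniform subsequential limit $U$ defined on all of $\R^2$. By Step 1, $U$ has limits $u^\pm$ at $x\to\mp\infty$ uniformly on compact time intervals, and its $\L^1$ distance to a shifted traveling wave is small at every time. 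Uniqueness of eternal solutions between two constant states that stay $\L^1$-close to traveling waves (via the $\L^1$ contraction and the asymptotic stability of the profile, running from $t=-T$ with $T\to\infty$) forces $U=S(x-\lambda t-h)$. The normalization then gives $h=0$, so the whole family converges.

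\textbf{Step 3 (case (ii), the main obstacle).} Here choose $(\tau_\ve,\xi_\ve)$ so that the centers of the two viscous profiles, tracked as in Step 1 along the two incoming shocks $\gamma_1,\gamma_2$, match the positions $\lambda_i t$ dictated by $W$. Concretely, pick $\tau_\ve$ and $\xi_\ve$ so that $\gamma_{i,\ve}(\tau_\ve+\ve t)\approx \xi_\ve+\ve\lambda_i t$ for $-T\le t\le -T/2$, which uses the $\mathcal O(\ve)$ shift equations. The rescaled data at time $-T$ are then $\L^1$-close to $S_1(x-\lambda_1(-T))+S_2(x-\lambda_2(-T))-u^*$ up to errors that vanish as $\ve\to0$ and then $T\to\infty$. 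The hard part is that these errors are measured in $\L^1$ on a window of size $\ve T$, while the curvature of the shocks and the slopes of $u$ contribute corrections of order $\ve T^2$ in rescaled variables. These must remain small after rescaling, which requires choosing $T=T(\ve)\to\infty$ slowly. By the $\L^1$ contraction, $U^\ve$ on bounded sets is then within $o(1)$ of the solution starting from two separated profiles at $-T$. By the construction of $W$ in Section~\ref{sec:8}, that solution converges to $W$ as $T\to\infty$ locally uniformly, using (\ref{W12}) and parabolic smoothing to upgrade $\L^1$ to uniform convergence. I expect this quantitative matching in Step 3, with control of the shifts uniform as $T\to\infty$, to be the crux.
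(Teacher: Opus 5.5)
There is a genuine gap. Your overall architecture is the same as the paper's: match the rescaled viscous solution, at a very negative rescaled time, with viscous profiles, then identify the limit by $\L^1$ contraction and, in case (ii), by the construction of Lemma~\ref{l:51}. But the step that carries all the weight is asserted, not proved.

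\textbf{Step 1 does not follow from Il'in--Oleinik.} You invoke Il'in--Oleinik stability to get that near the shock $u_\ve$ is close to $S\bigl((x-\gamma_\ve(t))/\ve\bigr)$. That theorem is a qualitative $t\to+\infty$ statement about integrable perturbations of a fixed profile with constant end states, and it gives no time scale. Here the situation is different in three ways. In rescaled variables the data are close to $u^\pm$ only outside an interval of length $L_\ve\to\infty$; in the paper $L_\ve\sim\ve^{-2/3}$, because the outer bound (\ref{err1}) holds only at distance $\ve^{1/3}$ from the shock. The outer states are not constant. And the available rescaled time is finite, of order $\O(\ve^{-1})$.

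Two ingredients are therefore missing:
\begin{itemize}
\item[(a)] A quantitative formation-time estimate, with a bound on the shift: data within $\delta_0$ of $u^\pm$ outside an interval of length $L$ become $C\delta$-close to a shifted profile after time $\O(1)(1+L)\delta^{-2}$. This is Theorem~\ref{t:41} with Remark~\ref{r:41}, obtained from the $(u,w)$-plane analysis of Lemmas~\ref{l:41}--\ref{l:43}.
\item[(b)] A localization principle that lets you replace $\bar u$ by data constant outside $[A',B']$ without changing the solution near the shock. This rests on the transversality (\ref{trans1}) of characteristics entering the shock and on Lemma~\ref{l:62}.
\end{itemize}

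The closeness you need in Step 1 must also be uniform, since an $\L^1$ error $\eta$ in the original variables becomes $\eta/\ve$ after rescaling. At that precision your Step 1 essentially \emph{is} part (i).

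\textbf{Step 2 cannot substitute for Step 1.} $\L^1$ contraction only propagates closeness forward in time. An eternal solution is pinned down only if its distance to the profile tends to zero as $t\to-\infty$, as in the uniqueness part of Lemma~\ref{l:51}; smallness at each time is not enough. Also, $u_\ve$ need not be monotone near $\xi$, since $u$ is only Lipschitz on each side of the shock; only the existence of the crossing point is needed.

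\textbf{In case (ii) the gap is worse.} At rescaled time $-T$ the two shocks are only $\O(\ve T)$ apart in the original variables. An outer estimate valid at fixed distance (or at distance $\ve^{1/3}$) from each shock says nothing about the shrinking middle region, so Step 1 cannot be applied to the two shocks separately. The paper compares $U^\ve$ with two separately localized solutions $V^\ve_1,V^\ve_2$. It then runs a bootstrap on (\ref{supvu}) up to rescaled time $-\ve^{-1/15}$, using that $f'(u^*)$ lies strictly between $\lambda_2$ and $\lambda_1$ (the lines with speeds $\mu_1,\mu_2$ in (\ref{l123})) to apply Lemma~\ref{l:62} across the middle region.

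Moreover, global $\L^1$ contraction cannot directly compare $U^\ve$ with the solution issuing from two separated profiles at time $-T$, because their difference is not integrable on $\R$. You have to truncate outside the window and control the truncation with a viscous domain-of-dependence estimate, as the paper does with $\Hat U^\ve$ and (\ref{UHUer}). Without (a), (b) and this bootstrap, the choice of $T(\ve)$ you describe cannot actually be carried out.
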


Our second result describes the limit of viscous approximations in a neighborhood of a 
point where a new shock is formed.
\begin{theorem}\label{t:2}
 Let {\bf (A1)-(A2)} hold, and 
let $u=u(t,x)$ be an entropy weak solution of the conservation law (\ref{1})-(\ref{2}).
Let $\ov P=(\tau,\xi)$ be a point where a new shock is formed.  Let $\Omega$ be a backward domain for 
$\ov P$ and assume $\bar u\in \C^4(\Omega_0)$.

Let $(u_\ve)_{\ve>0}$ be a family of solutions to the viscous equation (\ref{viscep}), whose initial data satisfy
\bel{uelim2}
\bigl\| u_\ve(0,\cdot) - \bar u\bigr\|_{\L^1(\R)}~\to~0,\qquad\quad
\bigl\| u_\ve(0,\cdot) - \bar u\bigr\|_{\C^4(\Omega_0)}~\to~0\qquad\qquad\hbox{as}\quad \ve\to 0.\eeq
Then  there exist points 
 $(\tau_\ve,\xi_\ve)\to (\tau,\xi)$ and constants $c,\sigma,\lambda$ such that the rescaled functions
 \bel{resc22}U^\ve(t,x)~\doteq~\ve^{-1/4} \Big[u_\ve\bigl( \tau_\ve +\ve^{1/2} t,~
 \xi_\ve +\ve^{3/4}x\bigr)- u(\tau,\xi)\Big] \eeq
converge, uniformly on bounded sets, 
to the function
\bel{PZdef}\Phi(t,x) ~=~c\, Z(\sigma t, x-\lambda t).\eeq
Here $Z$ is 
the global solution to the viscous Burgers' equation  defined at (\ref{Zlim}).
\end{theorem}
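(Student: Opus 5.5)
Throughout, I write $\bar u_0\doteq u(\tau,\xi)$, $a\doteq f''(\bar u_0)$ and $b\doteq f'(\bar u_0)$.

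The plan is to compare the viscous solution $u_\ve$, on a backward neighborhood of $\ov P$ and up to a time slightly before $\tau$, with the smooth inviscid solution obtained by characteristics. Then I localize and rescale, and finally pass to the limit using the stability of the viscous Burgers' equation together with the construction (\ref{CPn})--(\ref{Zlim}) of $Z$.

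\textbf{Step 1: reduction to Burgers.} Because of the $\ve^{1/4}$ amplitude scaling, the flux near $\bar u_0$ is quadratic to leading order: $f(\bar u_0+\ve^{1/4}v)=f(\bar u_0)+b\,\ve^{1/4}v+\tfrac a2\ve^{1/2}v^2+\O(\ve^{3/4})|v|^3$. Substituting into (\ref{viscep}) with the scaling (\ref{resc22}) gives
$$U^\ve_t+\ve^{-1/4}b\,U^\ve_x+a\,U^\ve U^\ve_x=U^\ve_{xx}+\O(\ve^{1/4}).$$
The drift term is removed by a Galilean shift, i.e.\ by choosing $\xi_\ve$ moving with speed $b$, which produces the $\lambda$ in (\ref{PZdef}). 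The scaling $U=c\,w(\sigma t,\cdot)$ of Remark~\ref{r:11} then reduces the equation to $w_t+ww_x=w_{xx}$, with $c,\sigma$ fixed by $a$ and the cubic coefficient $c$ in (\ref{Bid}). Equivalently, the exponents $(1/2,3/4,1/4)$ are exactly those making (\ref{ud3}) with $\delta=\ve^{1/4}$ compatible with unit viscosity.

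\textbf{Step 2: matching at an intermediate time.} I fix a large $n$ and set $t_\ve\doteq\tau_\ve-n\ve^{1/2}$. For $t\le t_\ve$ the inviscid solution is smooth near the backward characteristics, with $u_x=\O(\ve^{-1/2}\cdot\ve^{1/4})$ at worst. A parabolic energy/maximum-principle estimate on the backward domain $\Omega$ then shows that $u_\ve-u=\O(\ve)\|u_{xx}\|$ in $\L^\infty$. Here I use the $\C^4$ convergence (\ref{uelim2}), which controls $u_{xx}$ and $u_{xxx}$ along characteristics before blow-up, together with finite propagation up to exponentially small tails; the $\L^1$ convergence handles the region far away. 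The key quantitative point is that near time $t_\ve$ one has $|u_{xx}|\lesssim \ve^{-5/4}\cdot\ve^{1/4}$ in the rescaled units. The resulting error, after dividing by $\ve^{1/4}$, is $o(1)$ for each fixed $n$ as $\ve\to0$; this should follow from the cubic structure $-x^{1/3}$ at time $-n$, whose derivatives are bounded by powers of $n$. I choose $\tau_\ve$ as the shock formation time of $u_\ve$'s data (or simply $\tau$) and $\xi_\ve$ accordingly, both converging to $\tau,\xi$. By Step 1 and (\ref{ud3}), $U^\ve(-n,\cdot)$ then converges locally uniformly to $c\,z(-\sigma n,\cdot)$.

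\textbf{Step 3: passing to the limit.} For $t\ge -n$, $L^1_{loc}$/$L^\infty$ stability of viscous Burgers with small source gives $U^\ve\to c\,Z^{(n)}$ on bounded sets, modulo the change of variables. Letting $n\to\infty$ and using (\ref{Zlim}) concludes the proof, provided the convergence $Z^{(n)}\to Z$ is uniform on bounded sets. That uniformity, together with the uniform convergence of the data, yields the result by a diagonal argument.

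\textbf{Main obstacle.} The hardest part is the stability in Step 3 when the data grow like $|x|^{1/3}$ at infinity, which makes the comparison non-compact. I would try a weighted $\L^1$ contraction, or a comparison-principle argument with barriers built from $z(-n\pm\eta,\cdot)$, exploiting that information propagates with speed $\O(|x|^{1/3})$ so that errors from far regions do not reach bounded sets within a bounded time. The same localization is needed to make the error estimate of Step 2 uniform.
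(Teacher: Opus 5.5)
\textbf{Step 2 contains a false claim, and it invalidates Step 3 as written.} You assert that for each fixed $n$ the rescaled viscous and inviscid solutions agree at rescaled time $-n$ up to an error that is $o(1)$ as $\ve\to0$, so that $U^\ve(-n,\cdot)\to c\,z(-\sigma n,\cdot)$. This cannot hold. The exponents $(1/2,3/4,1/4)$ were chosen precisely so that the rescaled equation has \emph{unit} viscosity, so the viscous term does not become small as $\ve\to0$.

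In the normalized setting, near the singular characteristic one has $|u_{xx}(t,\cdot)|\sim|t-\tau|^{-5/2}$ in original units; equivalently $|z_{xx}(s,\cdot)|\sim|s|^{-5/2}$, as in Lemma~\ref{l:81}. The accumulated viscous correction up to time $\tau-n\ve^{1/2}$ is therefore of order $\ve\int^{\tau-n\ve^{1/2}}|t-\tau|^{-5/2}\,dt\sim n^{-3/2}\ve^{1/4}$. After dividing by $\ve^{1/4}$ this is of order $n^{-3/2}$: uniform in $\ve$, but not vanishing as $\ve\to0$. Your intermediate orders of magnitude for $u_x$ and $u_{xx}$ do not match this scaling either.

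The statement itself shows that this discrepancy survives the limit. $U^\ve(-n,\cdot)$ must converge to $Z(-n,\cdot)$, and $Z(-n,x)>z(-n,x)$ for $x>0$. The reason is that $z$ is a strict subsolution of the viscous Burgers equation on $x>0$, since $z_{xx}>0$ there, so every $Z^{(m)}$ with $m>n$ lies strictly above $z$ at time $-n$.

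The same observation breaks your double limit. If $U^\ve\to c\,Z^{(n)}$ held on $t\ge -n$ for every fixed $n$, then $Z^{(n)}$ and $Z^{(m)}$ would have to agree at $t=-n$. But there $Z^{(n)}(-n,\cdot)=z(-n,\cdot)<Z^{(m)}(-n,\cdot)$ on $x>0$.

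\textbf{What is missing is a viscous--inviscid estimate with explicit decay in $|t|$,} rather than an $o_\ve(1)$ statement at a fixed time. The paper gets it from the moving barrier $U^{(\ve)}(t,x-\sigma_\ve(t))+C_3\bigl(\tfrac23|t|^{-3/2}+\ve^{1/4}|t|^{-1}\bigr)$. This barrier is built from $|U^{(\ve)}_{xx}|\le C_3\bigl(|t|^{-5/2}+\ve^{1/4}|t|^{-2}\bigr)$ and yields $|U^\ve-U^{(\ve)}|\le C_4\bigl(|t|^{-3/2}+\ve^{1/4}(\bigl|\ln|t|\bigr|+|\ln\ve|)/|t|\bigr)$. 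The paper pairs this with $|Z-z|\le 2|t|^{-3/2}$ from Remark~\ref{r:82}.

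The paper then couples the matching time to $\ve$:
\begin{itemize}
\item It takes $t_\ve=-\ve^{-\alpha}\to-\infty$ on the window $|x|\le\ve^{-\beta}$, with $2\beta/3<\alpha<\beta$ small, so that all matching errors vanish in $\L^1$ simultaneously.
\item It compares with $Z$ directly rather than with $Z^{(n)}$.
\item It propagates forward by $\L^1$ contraction, with a source term $\O(\ve^{1/4-\beta})$ per unit time coming from $f'_\ve(U)-U$.
\item It localizes on a trapezoid whose lateral sides are crossed outward by characteristics (Lemma~\ref{l:62}), after cutting off flux and data via Lemma~\ref{l:91}.
\end{itemize}

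Your fixed-$n$ scheme could be salvaged, but only in the quantitative form $\limsup_{\ve\to0}\|U^\ve(-n,\cdot)-c\,z(-\sigma n,\cdot)\|_{\L^\infty}\le Cn^{-3/2}$ on the dependence interval at time $-n$. That interval has length roughly $n$, so the $\L^1$ error is of order $n^{-1/2}$. This must be followed by a stability estimate on $[-n,T]$ that does not amplify this error.

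In either version the barrier estimate is the missing ingredient. The localization you list as the ``main obstacle'' also has to be made rigorous by a lemma like Lemma~\ref{l:62}, since the parabolic equation has no finite propagation speed.
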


\begin{remark}{\rm
Since all  the above functions $U^\ve$  are smooth and have derivatives which are uniformly bounded on compact sets,  the convergence in $\L^1_{loc}$ actually implies
convergence in $\C^k(\Omega)$ for every bounded open set $\Omega\subset\R^2$.
}\end{remark}

\begin{remark} {\rm Let $u=u(t,x)$ be a solution of (\ref{1}) which is $\C^3$ for $t\in [0, t_0]$.
For a fixed $\ve>0$,  consider a sequence of solutions $u_n$ to the viscous equation 
$$u_t+f(u)_x~=~\ve\,u_{xx}$$ 
such that 
\bel{C0lim}\bigl\|u_n(0,x) - \bar u\bigr\|_{\C^0}~\to~ 0.\eeq
Since viscous solutions regularize any discontinuous data, this would imply
\bel{conve}\bigl\|u_n(t,\cdot ) - u(t,\cdot)\bigr\|_{\C^3}\,\to\, 0\qquad \hbox{for } ~~0<t<t_0\,.\eeq
However, if the viscosity coefficient $\ve=\ve_n\to 0$ depends on $n$, the convergence (\ref{conve}) 
may no longer hold.     Indeed, the validity of (\ref{conve}) requires that the convergence (\ref{C0lim})
hold much faster than the rate at which $\ve_n\to 0$.   For this reason, the second assumption in (\ref{uelim2})
cannot be weakened to (\ref{C0lim}).
}
\end{remark}

For the basic theory of conservation laws we refer to \cite{Bbook, Dafermos, HR, Kr, Liubook}. 
In the case of a strictly convex flux,  solutions are entropy admissible iff they satisfy the Lax condition, i.e., 
if they only have downward jumps.

The remainder of this paper is organized as follows. Section~\ref{sec:2} contains a few preliminary computations,
motivating the choice of rescalings.  In Section~\ref{sec:3} we recall a classical transformation of variables,
useful in the analysis of traveling profiles for parabolic equations~\cite{BiB02, BD, F}. Section~\ref{sec:4}  
contains a detailed analysis on the formation of a viscous shock profile, from an initial data which is close to 
two limit values $u^-, u^+$ outside a bounded interval. The earlier results proved in \cite{BD}
are here refined in a crucial way, providing an explicit bound on the length of time needed for the solution
to become close to a viscous shock profile.
In Section~\ref{sec:5} we construct an ``eternal solution" which for $t\to-\infty$ contains two viscous shock profiles,
which merge into a single one as $t\to +\infty$. Section~\ref{sec:6} contains some preliminary lemmas.
These will allow us to conveniently modify the initial data outside a bounded interval $[a,b]$, without changing the 
asymptotic properties of rescaled viscous approximations at the point $(\tau, \xi)$.
After all these preliminaries,
the proof of Theorem~\ref{t:1} is given in Section~\ref{sec:7}.  The last two sections are concerned with the formation of a new shock.  In Section~\ref{sec:8} we derive a few properties of the global viscous solution $Z(t,x)$ 
considered at (\ref{Zlim}).    Finally, Sections~\ref{sec:9} and \ref{sec:10} contain a proof of Theorem~\ref{t:2}.

In the literature on hyperbolic systems of conservation laws,  a general theorem on the $\L^1$ convergence of 
vanishing viscosity approximations was proved in \cite{BiB05}. The local construction of viscous approximation
by matched asymptotic expansions was achieved in 
 \cite{GX} in a neighborhood of an isolated  shock, and more recently in 
recently in \cite{ACG, CG} near a point where a new shock is formed, up to the time of singularity formation.
Our  present point of view is somewhat different, since we assume that some
family of viscous approximations is already given, and try to understand their local behavior near a point of generic singularity. We believe this approach can be fruitful also in connection with different kinds of approximations.

\section{Variable rescaling at shock formation}
\label{sec:2}
\setcounter{equation}{0}
Let $u=u(t,x)$ be a solution to (\ref{1})-(\ref{2}), under the assumptions {\bf (A1)-(A2)}.

As long as the solution remains Lipschitz continuous, it is computed by the method of characteristics.
For every initial point $\xi\in \R$, along the characteristic $t\mapsto x(t,\xi)~=~\xi+ t f'(\bar u(\xi))$
one has
\bel{r3}u(t,x(t,\xi))~=~\bar u(\xi), \qquad u_x(t,x(t,\xi))~=~\left[ {1\over \bar u_x(\xi)} + t\,f''(\bar u(\xi)) \right]^{-1}\eeq
where the second identity is obtained by integrating 
\bel{r4} u_{xt} + f'(u)u_{xx} ~=~- f''(u) u_x^2\,,\eeq
along a characteristic.
If $\bar u_x(\xi)<0$, we consider the blow-up time 
\bel{r5} \T(\xi)~=~ \Big[-f''(\bar u(\xi))\cdot \bar u_x(\xi)\Big]^{-1}.\eeq
A new shock is formed at time $\T(\xi^*)$ along the characteristic starting at $\xi^*$ if the map
\bel{r6}\xi\,\mapsto\,\T(\xi) \eeq has a local minimum at $\xi=\xi^*$.
For generic initial data $\bar u\in \C^3(\R)$ with $\bar u_x(x)\to 0$ as $x\to\pm\infty$, there will be finitely many points $\xi_1<\xi_2<\cdots<\xi_N$ where the 
map (\ref{r6}) attains a local minimum, with $\T(\xi_k)\leq T$.  In this case, the solution to (\ref{1})-(\ref{2}) 
will contain finitely many shocks within the time interval $[0,T]$.
\v
On regions where the solution is smooth and monotonically decreasing, we can consider the inverse function 
$x=x(t,u)$.   This satisfies the evolution equation
\bel{r7} x_t(t,u)~=~f'(u).\eeq
New shocks originate at points where $x_u=0$.   Since
\bel{r8} x_{ut}~=~f''(u),\qquad\qquad x_u(0)~=~{1\over \bar u_x}\,,\eeq
the time where $x_u=0$ along a characteristic is again given  by (\ref{r5}).
\v
\subsection{Variable rescaling for an inviscid solution.}
To simplify our notation, we now assume that the solution $u=u(t,x)$ is defined for $(t,x)\in [-1, T]\times\R$,
and a new shock forms at $(t,x)=(0,0)$.   In view of Remark~\ref{r:11}, w.l.o.g.~we assume that 
\bel{u00} u(0,0)\,=\,0, \qquad f(0)\,=\,f'(0)\,=\,0,\qquad f''(0)=1,\eeq
\bel{111} f'(u)~=~u + \psi_1(u) u^2,\eeq
for some bounded function $\psi_1$.
To describe the process of shock formation, up to leading order, we perform a 
family of variable rescalings.   This amounts to ``zooming in" at the singular point, i.e.,
\bel{rs1}
t\,=\,\ve^\alpha \tau,\qquad x \,=\, \ve^\beta y,\qquad u \,=\, \ve^\gamma v.\eeq
Inserting (\ref{rs1}) in (\ref{1}), using (\ref{111}) one obtains
\bel{r9}\ve^{\gamma-\alpha} v_\tau + f'(\ve^\gamma v) \ve^{\gamma-\beta} v_y~=~\ve^{\gamma-\alpha} v_\tau + \Big(\ve^\gamma v + \psi_1 (\ve^\gamma v) \ve^{2\gamma}v^2\Big) \ve^{\gamma-\beta} v_y~=~0.\eeq
The limit $\ve\to 0$ will thus be a solution to
Burgers' equation
$$v_\tau + vv_y~=~0$$
provided that 
$$\gamma-\alpha~=~2\gamma-\beta,\qquad\qquad \gamma \,=\, \beta-\alpha\,>\,0.$$
In the limit, we wish to capture a solution where
$$x~=~-b u^3 + tu.$$
That is, the above solution should be invariant under rescaling.
This requires
$$\ve^\beta y~=~-b \,\ve^{3\gamma} v^3 + \ve^{\alpha+\gamma} \tau v,$$
hence
$\beta=3\gamma=\alpha+\gamma$.  The three exponents should thus satisfy the proportionality relation
$$\alpha : \beta : \gamma~=~2 : 3 : 1.$$
%

\subsection{Rescaling of viscous solutions.}
For solutions to the viscous equation (\ref{viscep}), the choice
 of the constants $\alpha, \beta,\gamma$ in the rescaling (\ref{rs1})  is dictated by two requirements:
\begi
\item[(i)] If $u(t,x)$ is a solution to Burgers' equation with $\ve$-viscosity, then $v(\tau,y)$ is a solution
of the same equation with unit viscosity:
\bel{v1e}
u_t+uu_x~=~\ve\, u_{xx}\qquad\implies\qquad v_\tau + vv_y~=~v_{yy}\,.\eeq
\item[(ii)] If the map $x\mapsto u(t,x)$ is monotone decreasing, and has a smooth inverse $u\mapsto x(t,u)$, then 
the same holds for $v$ and the map $v\mapsto y(\tau,v)$ satisfies
\bel{xuuu}
x_{uuu}~=~y_{vvv}\,.\eeq
\endi
Inserting (\ref{rs1}) in (\ref{v1e}) one obtains
$$\ve^{\gamma-\alpha} v_\tau + \ve^{2\gamma-\beta}v v_y~=~\ve^{1+\gamma-2\beta} v_{yy}\,.$$
Moreover, inserting (\ref{rs1}) in (\ref{xuuu})  we obtain
$$x_{uuu} ~=~\ve^{\beta-3\gamma} y_{vvv}~=~ y_{vvv}\,.$$
The requirements (i)-(ii) thus imply the three identities
$$\gamma-\alpha~=~2\gamma-\beta ~=~1+\gamma-2\beta,\qquad\qquad \beta-3\gamma\,=\,0.$$
This system has the unique solution
\bel{rv4}\alpha\,=\,{1\over 2},\qquad \beta\,=\, {3\over 4}\,,\qquad \gamma\,=\,{1\over 4}\,.\eeq
This indicates that the appropriate rescaling of viscous solutions, in a neighborhood of a point where a new shock is formed, is
\bel{rv5}u^\ve(t,x)~=~\ve^{-1/4} u(\ve^{1/2} t, \, \ve^{3/4} x).\eeq
%

\section{Viscous shock profiles, in transformed variables}
\label{sec:3}
\setcounter{equation}{0}
Starting with the viscous conservation law
\bel{21}
u_t + f(u)_x~=~u_{xx}\,,\eeq
following \cite{BiB02, BD} we
consider the variable 
\bel{22}w\,=\, f(u) -u_x\,.\eeq
Notice that $w$ corresponds to the full flux function in (\ref{21}). Indeed,
\bel{20}u_t + w_x~=~0.\eeq
Differentiating (\ref{22}) w.r.t.~time and using (\ref{20}), one obtains
\bel{23} w_t~=~f'(u) u_t  -  u_{tx}~=~-f'(u)  w_x + w_{xx}\,.
\eeq
In (\ref{23}) the variable $w=w(t,x)$ is regarded as a function of $t,x$.
We now regard  $w=w(\tau,u)$ as a function of the two variables 
$$\tau= t,\qquad\quad u=u(t,x).$$
Using the identities
$$w_t \,=\, w_\tau + w_u u_t,\qquad \quad w_x~=~w_u\, u_x\,,\quad\qquad w_{xx} \,=\, w_{uu} u_x^2 + w_u\, u_{xx}\,,$$
{}from (\ref{23}) one obtains 
$$\bega{rl} w_\tau(\tau,u)&=w_t - w_u u_t\\[4mm]
&\ds =~-f'(u)  u_x w_u+ \left[  w_{uu} u_x^2 + w_u\, u_{xx}
\right] -u_t  w_u \,,\\[4mm]
&=~\bigl[ - f'(u) u_x + u_{xx}\bigl] w_u + u_x^2 w_{uu}- u_t w_u\\[4mm]
&=~ u_x^2 w_{uu}\,.
\enda$$
Recalling (\ref{22}), we finally obtain
\bel{24} w_\tau~=~\bigl( w-f(u)\bigr)^2\, w_{uu}\,.\eeq
From (\ref{22}) it also follows
\bel{25} 
w_u~=~{w_x\over u_x}~=~{f'(u)u_x - u_{xx}\over u_x}~=~-{u_t\over u_x}\,.\eeq
\v
Next, we review some results on the stability of traveling profiles, for the conservation law with viscosity (\ref{21}).
Let
\bel{tv1}u(t,x)~=~ S(x-\lambda t)\eeq be a traveling wave solution of (\ref{21}), with asymptotic limits $S(\pm\infty) = u^\pm$, as in (\ref{SODE})--(\ref{S0}).
In terms of the variable $w\,=\, f(u) -u_x$ introduced at (\ref{22}),
in the plane with coordinates $(u,w)$ a traveling wave $u=S(x-\lambda t)$ 
corresponds to a stationary solution of (\ref{24}).
Its graph is a segment with endpoints $\bigl( u^+, f(u^+)\bigr)$, $\bigl( u^-, f(u^-)\bigr)$ and  slope
$w_u=\lambda$ equal to the Rankine-Hugoniot speed of the shock, as in (\ref{lc}).
 Indeed,
$$u_t~=~-f'(u) u_x +u_{xx}~=~\lambda u_x\,,$$
while
\bel{wu}w_u~=~f'(u) - {u_{xx}\over u_x}~=~-{u_t\over u_x}~=~\lambda.\eeq

The following lemma provides a quantitative converse to this statement. 
Namely, if the curve 
\bel{gamx}x~\mapsto ~\gamma(x)~\doteq~\Big( u(x), ~f\bigl(u(x)\bigr)-u_x(x)\Big)\eeq
remains within a small strip  around a segment 
with endpoints
$$ P^+\,=\, \bigl(u^+, f(u^+)\bigr),   \qquad P^-\,=\, \bigl(u^-, f(u^-)\bigr),$$
then $u(\cdot)$ is close to a viscous shock profile $S(\cdot)$ 
with asymptotic limits $S(\pm\infty) = u^\pm $.
In the following we denote by 
\bel{tilw}\Tilde w(u)~\doteq~f(u^+) + {f(u^-)-f(u^+)\over u^--u^+}\cdot (u-u^-),\qquad\qquad u\in [u^+, u^-],\eeq
the equation of the  straight line through $P^+, P^-$.

\begin{lemma} \label{l:31} Let $f$ be a smooth flux function, and assume that
the states $u^+<u^-$
are connected by a viscous shock profile $S(\cdot)$.
For any  given $\delta >0$, let $x\mapsto u(x)$ be a smooth profile
which satisfies 
\bel{3.3}\Big|  f\big(u(x)\big)-u_x(x) - \Tilde w\big(u(x)\big)\Big|~\leq~\delta
\qquad\qquad\forall x\in\R,
\eeq
 together with
\bel{3.15}\limsup_{x\to -\infty}~ u(x)>{u^++u^-\over 2}\,,\qquad\qquad \liminf_{x\to +\infty}
~u(x)<{u^++u^-\over 2}\,.\eeq
Then there exists a suitable shift $c\in\R$ such that
\bel{3.16}\big\| u-S(\cdot\,-c)\big\|_{\L^\infty(\R)}~\leq ~\O(1)\cdot \delta.
\eeq
Here $\O(1)$ denotes a quantity which is uniformly bounded
 as long as $u^-,u^+$ range within a given compact set and the distance $|u^+-u^-|$ remains bounded away from zero.
\end{lemma}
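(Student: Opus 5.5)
Set $g(u)\doteq f(u)-\Tilde w(u)$. This function is strictly convex, vanishes exactly at $u^\pm$, and is negative on $]u^+,u^-[$. The viscous profile satisfies $S'=g(S)$, by (\ref{SODE})--(\ref{lc}). Hypothesis (\ref{3.3}) says that $u$ satisfies the perturbed ODE
$$u_x(x)~=~g\bigl(u(x)\bigr)+e(x),\qquad\qquad |e(x)|\leq\delta .$$
The plan is to compare $u$ with $S$ by a phase-plane argument, handling separately the region where $u$ is away from $u^\pm$ and the region near the endpoints, where the vector field degenerates.

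\emph{Step 1 (range of $u$).} First I would show that $u(x)\in[u^+-C\delta,\,u^-+C\delta]$ for all $x$. Suppose $u(x_0)>u^-+C\delta$. Since $g'(u^-)>0$, convexity gives $g(u)\geq c(u-u^-)$ for $u>u^-$, so $u_x\geq c(u-u^-)-\delta>0$ there. Hence $u$ is increasing for $x<x_0$ and $u-u^-$ decreases as $x\to-\infty$. But then $u-u^-$ stays above $C\delta$ on the whole half-line, while $u_x\geq c\,C\delta/2>0$, which is impossible for a function bounded below as $x\to-\infty$. The case $u<u^+-C\delta$ is symmetric: there $g(u)\geq c(u^+-u)$, so $u$ decreases and escapes to $-\infty$ as $x\to+\infty$, contradicting the $\liminf$ in (\ref{3.15}). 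Here I use that solutions do not blow up in finite $x$, because $u$ is smooth on all of $\R$.

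\emph{Step 2 (monotone crossing and choice of shift).} Let $m=(u^++u^-)/2$ and $\eta>0$ be a fixed small constant. On $[u^++\eta,\,u^--\eta]$ one has $g\leq-c_\eta<0$, so for $\delta$ small $u_x<0$ wherever $u$ takes values in this band. Consequently $u$ crosses each level in the band exactly once. Combined with (\ref{3.15}), this shows $u$ takes the value $m$ at a unique point $c$, and I set the shift by $S(-c)$-normalization, i.e.\ $u(c)=m=S(0)$. On the stretch where $u$ lies in the band, I compare inverse functions: $dx/du=1/(g(u)+e)$ and $1/g(u)$ differ by $O(\delta)$ uniformly. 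Integrating over a $u$-interval of bounded length shows that $x\mapsto u(x)$ and $S(x-c)$ differ by $O(\delta)$ there, using the Lipschitz bound $|S'|\le C$. Once $u$ enters $[u^-- \eta,\,u^-+C\delta]$ to the left, it cannot return to the band, since the band is crossed only once. The same holds on the right.

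\emph{Step 3 (tails; the main obstacle).} The hard part is to keep the error $O(\delta)$, rather than $O(\delta/\eta)$ or growing linearly in $x$, in the tails where $u$ is near $u^\pm$. The comparison-of-inverses trick degenerates there because $g\to0$. I would instead linearize. Near $u^-$, set $v=u-u^-$ and note $v_x=g'(u^-)v+O(v^2)+e$. Since $g'(u^-)>0$, this is unstable forward in $x$, hence stable backward in $x$ toward $-\infty$. Writing $z(x)=u(x)-S(x-c)$, I get
$$z_x~=~\bigl(g'(\theta)\bigr)z+e,$$
with $g'(\theta)\geq\kappa>0$ on the left tail, once $\eta$ is small. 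Integrating backward from the matching point $x_1$ gives
$$|z(x)|~\leq~e^{-\kappa(x_1-x)}|z(x_1)|+\delta/\kappa .$$
By Step 2, $|z(x_1)|=O(\delta)$, so $|z|=O(\delta)$ on the whole left tail. The right tail is symmetric, because $g'(u^+)<0$ gives decay forward in $x$. The issue to check carefully is that $\eta$ can be chosen depending only on $u^\pm$, so that $g'$ keeps a uniform sign there and all constants depend only on $g$. This yields the uniform $\O(1)$ claimed in (\ref{3.16}).

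Finally, for $\delta$ not small, (\ref{3.16}) follows trivially from Step 1: both $u$ and $S$ are bounded by constants times $(1+\delta)$, so the bound holds after enlarging the constant.
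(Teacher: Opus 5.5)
The paper does not prove this lemma itself; it defers to Lemma~2 of \cite{BD}. Your phase-plane route is a reasonable self-contained alternative, and Steps~2 and~3 are sound. In Step~2 you use the unique downward crossing of the band and compare inverse functions. In Step~3 you use Gronwall in the stable direction: backward near $u^-$, forward near $u^+$. Step~1, however, is argued in the wrong direction in both cases. As written it does not prove the range bound that Step~3 and your large-$\delta$ remark rely on.

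\emph{First case.} Since $g'(u^-)>0$, the state $u^-$ is attracting as $x\to-\infty$. Starting from $u(x_0)>u^-+C\delta$ and moving left, $u$ decreases toward $u^-$ and may fall below $u^-+C\delta$. So your claim that $u-u^-$ ``stays above $C\delta$ on the whole half-line'' does not follow. An orbit of $u_x=g(u)$ leaving $u^-$ upward violates it.

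\emph{Second case.} The sign is wrong. For $u<u^+-C\delta$ one has $g(u)\geq |g'(u^+)|(u^+-u)>\delta$, so $u_x>0$ and $u$ \emph{increases}. Moving right, it returns toward the attracting state $u^+$, which is perfectly compatible with $\liminf_{x\to+\infty}u<m$.

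The repair is to reverse the directions and swap the roles of the two conditions in (\ref{3.15}). Take $C>1/\min\{g'(u^-),\,|g'(u^+)|\}$, so that $u_x>0$ whenever $u>u^-+C\delta$ or $u<u^+-C\delta$. Then any level above $u^-+C\delta$ or below $u^+-C\delta$ can only be crossed upward.
\begin{itemize}
\item If $u(x_0)>u^-+C\delta$, this gives $u(x)>u(x_0)>m$ for all $x>x_0$. That contradicts the $\liminf$ at $+\infty$.
\item If $u(x_0)<u^+-C\delta$, it gives $u(x)<u(x_0)<m$ for all $x<x_0$. That contradicts the $\limsup$ at $-\infty$.
\end{itemize}

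This argument works for every $\delta>0$, uses only the convexity of $g$, and needs no blow-up discussion. It supplies the confinement $u\leq u^-+C\delta$ on the left tail and $u\geq u^+-C\delta$ on the right tail that Step~3 needs. With it, Steps~2 and~3 and your final remark yield (\ref{3.16}). The constants depend only on $g'(u^\pm)$, $\|S'\|_\infty$ and $\eta$, and these are uniform under (\ref{fuc}) as long as $u^--u^+$ stays bounded away from zero. Your Step~3 already uses the correct picture of which endpoint is stable in which direction; Step~1 is simply inconsistent with it.
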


For a proof, see Lemma~2 in \cite{BD}.  
Notice that the assumption (\ref{3.15}) is needed to rule out trivial situations such as
$u(x)= u^+$  (or $u(x)= u^-$) for all $x\in\R$.   
Indeed, in this trivial case we would have 
\[ \Big|  f\big(u(x)\big)-u_x(x) - \Tilde w\big(u(x)\big)\Big|~=~\bigl| f(u^+) - \Tilde w(u^+)\bigr|~=~0\qquad\qquad\forall x\in\R.\]
However, $u(x)=u^+$ is a constant function, and cannot satisfy (\ref{3.16}) for any shift $c$.

%
%
%

\section{Review of viscous shock formation}
\label{sec:4}
\setcounter{equation}{0}
In this section we review the analysis in \cite{BD} on the formation of a viscous shock profile, refining some
of the results in the case where the flux $f$ is uniformly convex.
The setting is as follows. Let $u=u(t,x)$ be a solution to the viscous conservation law (\ref{21}).
As shown in Fig~\ref{f:d53}, assume that the initial data satisfy
\bel{idbox}\left\{\bega{cl}\big|\bar u(x)-u^-\big|\leq\delta_0\quad \qquad &\hbox{if}~~ x\leq a\,,
\\[2mm]
\big|\bar u(x)-u^+\big|\leq\delta_0\quad \qquad &\hbox{if}~~x\geq b\,,
\\[2mm]
\bar u(x)\in [m,M]\quad\qquad &\hbox{for all}~~x\in\R\,.\enda\right.\eeq
We seek an estimate of how long it takes for the solution to become close to a traveling profile.
Notice that (\ref{idbox}) does not require the existence of the limits $\lim_{x\to \pm\infty} \bar u(x)$.
Therefore, as $t\to +\infty$, the solution may not approach asymptotically any viscous shock profile. 
Differently from most literature on the topic, rather than on the asymptotic limit here we focus on the transient behavior of the solution. 

\begin{figure}[ht]
\centerline{\hbox{\includegraphics[width=8cm]{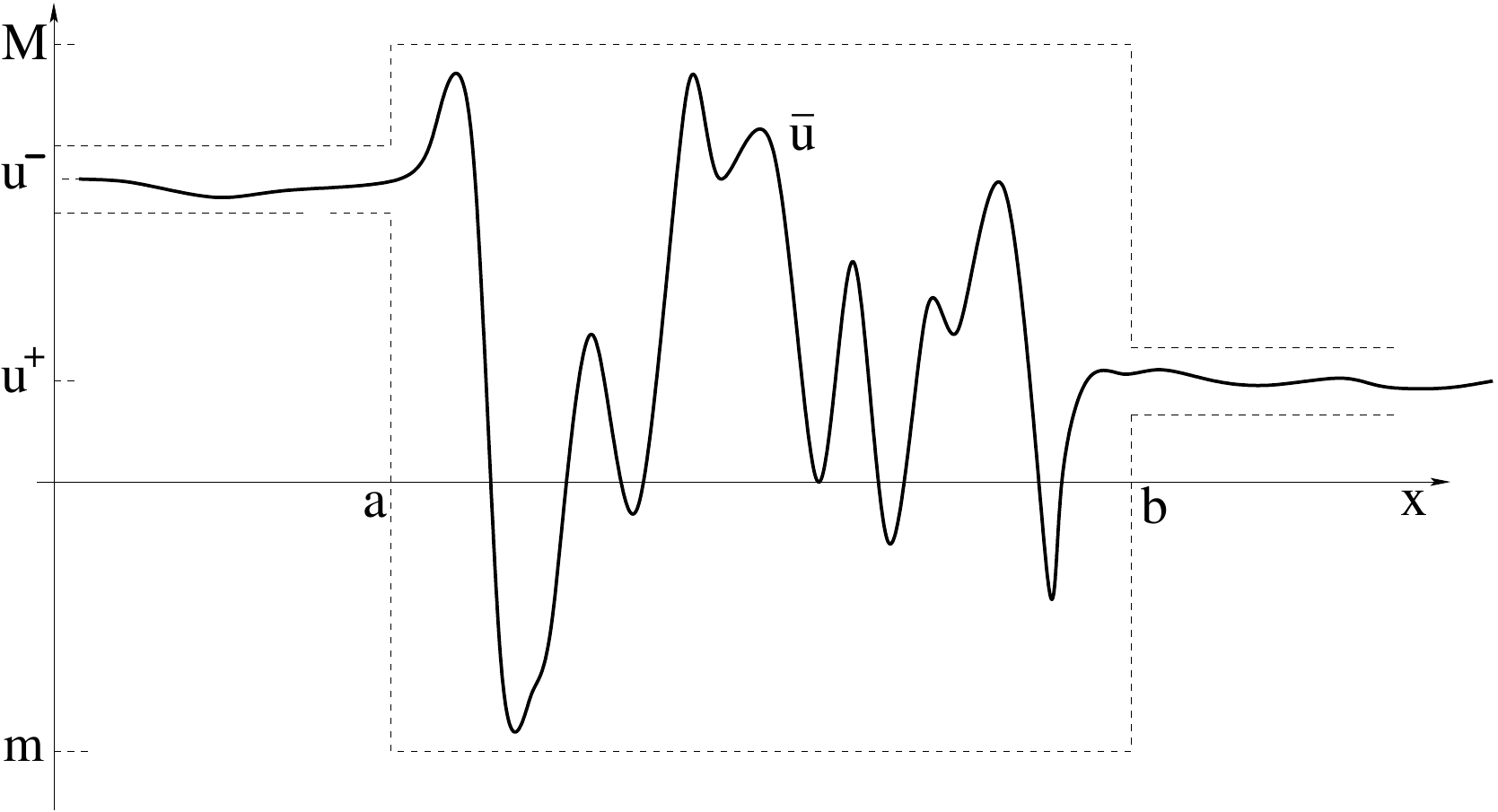}}}
\caption{\small  An initial data satisfying the conditions (\ref{idbox}).}
\label{f:d53}
\end{figure}

As time progresses, the evolution of the corresponding curve $x\mapsto \gamma(t,x)$ in (\ref{gamx})
is illustrated in Figures \ref{f:asy36} and \ref{f:asy37}. 
\begi
\item Given $\delta_1\geq 2\delta_0$, after a certain time $T_1$ the function $u(t,\cdot)$ takes values
inside the interval $[u^+-\delta_1,\, u^-+\delta_1]$. Hence the curve $\gamma$ 
lies inside the vertical strip where $u^+-\delta_1\leq u\leq \, u^-+\delta_1$, see Fig.~\ref{f:asy36}, right.
\item After a  time $T_2$, the curve $\gamma$ is entirely contained in a neighborhood 
$\Omega^{\delta_1} $  of the convex domain
\bel{Omdef} \Omega~\doteq~\Big\{ (u,w)\,;~~u\in [u^+, u^-],~f(u)\leq w\leq \Tilde w(u)\Big\},\eeq
where $\Tilde w$ is the affine function at (\ref{tilw}), see Fig.~\ref{f:asy37}, left.
\item After a time $T_3$, the curve $\gamma$ is
entirely contained in a neighborhood of the segment with endpoints $P^+, P^-$,
see Fig.~\ref{f:asy37}, right.
By Lemma~\ref{l:31}, this implies that for $t\geq T_3$ the function $u(t,\cdot)$ is well approximated
by a viscous traveling profile.
\endi

\begin{figure}[ht]
\centerline{\hbox{\includegraphics[width=0.60\textwidth]{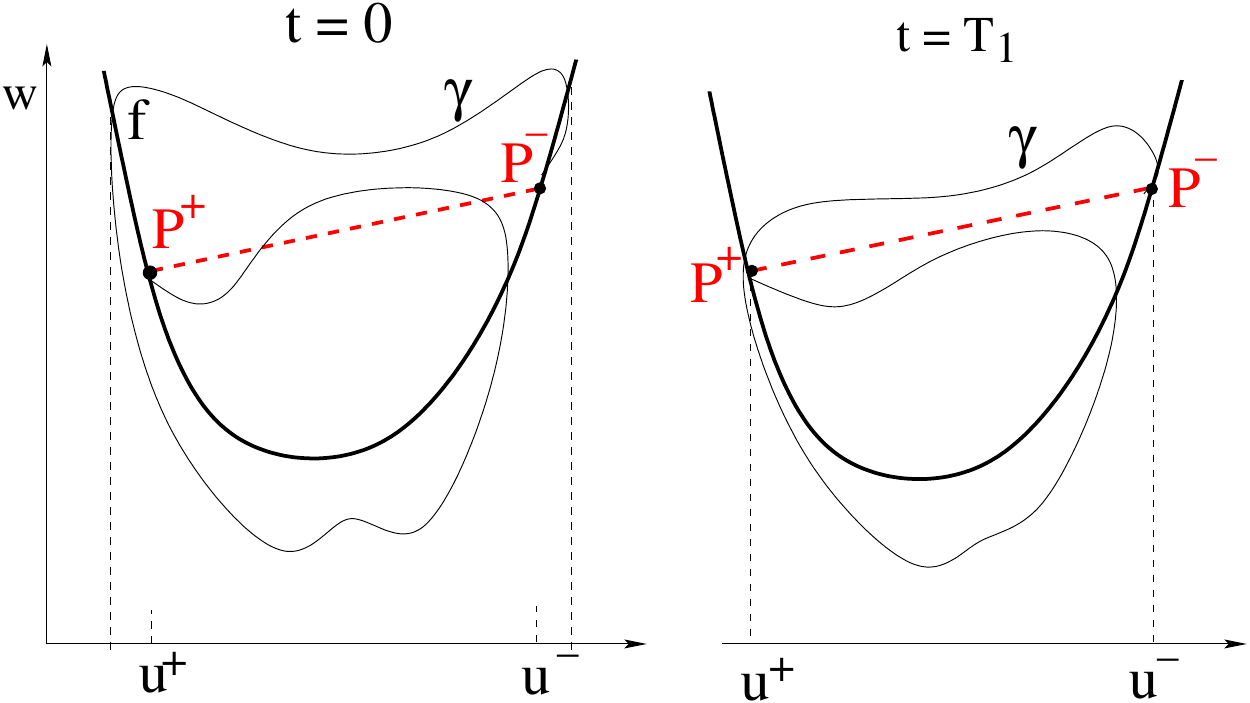}}}
\caption{\small The initial phase in which a viscous shock profile is approached. Left: a curve $\gamma$
corresponding to general initial data, as in 
(\ref{idbox}). Right: after a first phase, the curve $\gamma$ is entirely contained in a vertical strip, where
$u\in [u^+-\delta_1, ~u^-+\delta_1]$.
}
\label{f:asy36}
\end{figure}

\begin{figure}[ht]
\centerline{\hbox{\includegraphics[width=0.60\textwidth]{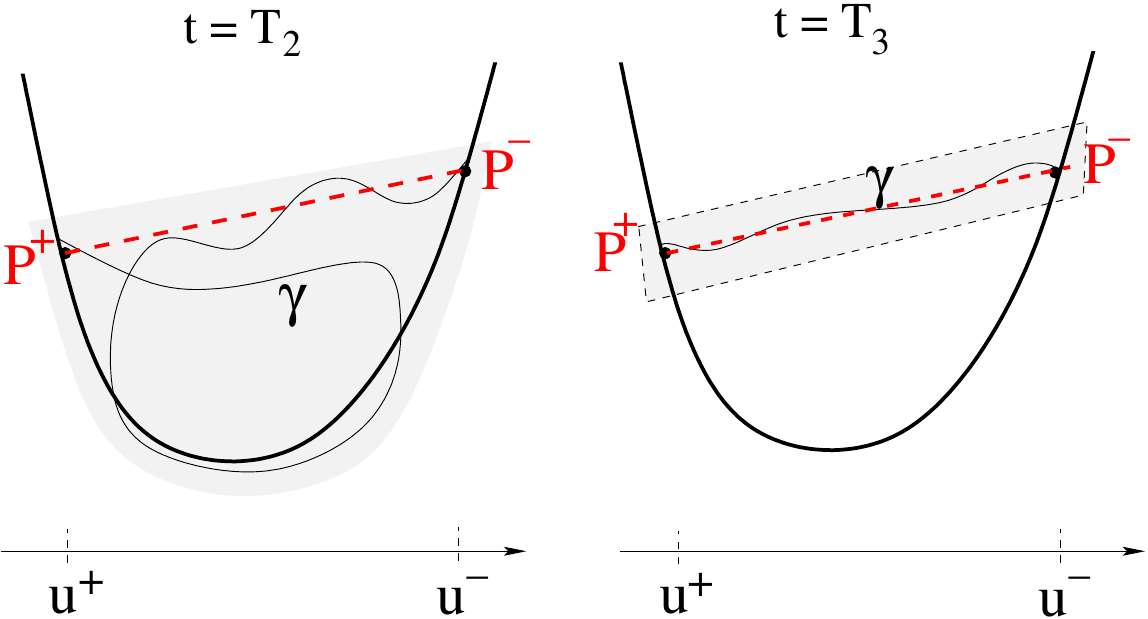}}}
\caption{\small The second and third phase in which a viscous shock profile is approached. Left: 
for $t\geq T_2$, the curve $\gamma$ is entirely contained in a neighborhood of
the convex closure of the graph of $f$, between $u^+$ and $u^-$. Right: at the end of a third phase,
the curve $\gamma$ is contained in a thin strip around the segment with endpoints $P^+, P^-$.
This implies that for $t\geq T_3$ the function $u(t,\cdot)$ is well approximated by a viscous 
traveling profile.}
\label{f:asy37}
\end{figure}

The next lemmas provide precise results in this direction. Together with the domain $\Omega$, for any $\delta_1>0$
we consider the convex neighborhood 
\bel{omd}
\Omega^{\delta_1}~\doteq~\ov{co} \Big\{ \bigl(u,f(u)+\xi\bigr)\,;~~u\in [u^+-\delta_1,~ u^-+\delta_1],~~
|\xi|\leq \delta_1\Big\},
\eeq
where $\ov{co}(S)$ denotes the closed convex hull of a set $S\subset\R^2$.
\begin{lemma}\label{l:41}
Let $u=u(t,x)$ be a solution of (\ref{21}), for a smooth flux function $f$ satisfying (\ref{fuc}) and   with initial data as in (\ref{idbox}).
Then
\begi
\item[(i)] Given $\delta_1\in [2\delta_0, 1]$, for every
\bel{T1}
t~\geq~T_1~\doteq~{8(M-m)(b-a)\over c_1 \delta_1^{2}}\,,
\qquad c_1\,\doteq\, \min_{u\in [m,M]} f''(u),\eeq
one has 
\bel{uin}
u(t,x)~\in~[ u^+-\delta_1\,,~u^-+\delta_1].\eeq
\item[(ii)] 
For all $x\in\R$ and
\bel{T2}
t~>~T_2~\doteq~\max\left\{{1\over c_{1}\delta_{1}}\,,\ {8(M-m)(b-a )+8c_{1}+\bigl(u^{-}-u^{+}+2\delta_{1}\bigr)^2c_{1}\over 2c_{1}\delta_{1}^{2}}\right\},
\eeq

 there holds
\bel{uomd} \Big( u(t,x),~ f\bigl(u(t,x)\bigr)-u_x(t,x)\Big)\,\in\,\Omega^{\delta_1}
.\eeq
\item[(iii)] In addition, for $t\geq T_1$ the solution $u(t,\cdot)$ satisfies the property 
\bel{adec}
x_1< x_2\qquad\implies\qquad u(t,x_1)~\geq~u(t, x_2) - 2\delta_1\,.\eeq
\endi
\end{lemma}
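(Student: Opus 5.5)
The plan is to combine three tools: the Oleinik-type one-sided bound, $L^1$-contraction against a comparison solution trapped between the asymptotic states, and the maximum principle for $w=f(u)-u_x$ in the $(\tau,u)$-variables of Section~\ref{sec:3}.

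\textbf{Oleinik-type bound.} Since $f''\geq c_1$ on $[m,M]$ and $u(t,\cdot)\in[m,M]$ by the maximum principle, the quantity $q=f'(u)_x$ satisfies
$$q_t+f'(u)q_x\leq q_{xx}-q^2 .$$
Comparison with the ODE $\dot q=-q^2$ gives $f'(u)_x\leq 1/t$, hence
\bel{oleplan} u_x(t,x)~\leq~{1\over c_1 t}\,.\eeq

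\textbf{Part (i).} I would introduce the comparison datum $\bar v$ obtained by clipping $\bar u$ to the interval $[u^+-\delta_0,\,u^-+\delta_0]$, and let $v$ be the corresponding solution. By (\ref{idbox}), $\bar v=\bar u$ outside $[a,b]$, so
$$\|\bar u-\bar v\|_{\L^1}\leq (M-m)(b-a).$$
By $\L^1$-contraction, $\|u(t)-v(t)\|_{\L^1}\leq (M-m)(b-a)$ for all $t$, and by the maximum principle $v(t,x)\in[u^+-\delta_0,u^-+\delta_0]$.

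Now suppose $u(t,x_0)>u^-+\delta_1$. By (\ref{oleplan}), moving leftward from $x_0$ the function $u$ can decrease at rate at most $1/(c_1t)$. Hence $u>u^-+\delta_1/2$ on an interval of length $c_1t\delta_1/2$ to the left of $x_0$. There, since $\delta_0\leq\delta_1/2$, the excess $u-v$ is bounded below by a fixed fraction of $\delta_1$ on a sub-interval of comparable length (the constants must be tuned to produce the factor $8$). This gives
$$\int (u-v)^+\,dx~\gtrsim~ c_1 t\,\delta_1^2/8,$$
which contradicts the $\L^1$ bound once $t\geq T_1$. The case $u(t,x_0)<u^+-\delta_1$ is symmetric, using (\ref{oleplan}) to the right of $x_0$.

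\textbf{Part (iii).} The same area argument applies. Assume $x_1<x_2$ with $u(t,x_1)<u(t,x_2)-2\delta_1$. By (\ref{oleplan}), to the right of $x_1$ the solution stays below $u(t,x_1)+\delta_1$ on a long interval, and to the left of $x_2$ it stays above $u(t,x_2)-\delta_1$. Together with part (i) and the fact that $v$ is trapped near the asymptotic states, at least one of these two long plateaus must differ from $v$ by order $\delta_1$. If $x_2-x_1$ is short, the plateaus overlap, which directly contradicts (\ref{oleplan}). I expect the bookkeeping to work with the same constant as in $T_1$.

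\textbf{Part (ii).} The convex set $\Omega^{\delta_1}$ is bounded below by the graph $w=f(u)-\delta_1$ and above by the chord-type line through the shifted endpoints. The lower bound is immediate from (\ref{oleplan}): for $t\geq 1/(c_1\delta_1)$,
$$w=f(u)-u_x\geq f(u)-\delta_1 .$$
This explains the first entry in $T_2$.

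For the upper bound, I would use that every affine function $\ell(u)$ is a stationary solution of (\ref{24}), so $w-\ell(u)$ satisfies
$$\partial_\tau(w-\ell)=u_x^2\,\partial_{uu}(w-\ell).$$
Taking $\ell$ to be a supporting line of $\Omega^{\delta_1}$ from above, the goal is to show $w\leq\ell(u)$ on the range $u\in[u^+-\delta_1,u^-+\delta_1]$, which holds for $t\geq T_1$ by part (i). The main obstacle is that $u_x$ may degenerate, so that the equation is not uniformly parabolic, and that no a priori control of the boundary values exists at $t=T_1$.

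To handle this, I would first try an integral (energy-type) estimate. Integrating $(w-\ell(u))^+$ against $u_x$, that is, tracking $\int\bigl(-u_x-(\ell(u)-f(u))\bigr)^+dx$ in time, the dissipation should be controlled by $c_1\delta_1^2$. The initial size should be bounded by the total variation budget, of order
$$(M-m)(b-a)+\bigl(u^--u^++2\delta_1\bigr)^2 .$$
This would produce the second entry of $T_2$. If this fails, the fallback is a barrier argument in the $(\tau,u)$ plane, using the endpoint values $w\approx f(u^\pm)$, which part (iii) and the tails force at the boundary of the $u$-range.
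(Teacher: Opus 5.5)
Your part (i) is sound and is essentially the paper's argument. $\L^1$-contraction against the clipped solution $v\le u^-+\delta_0$ bounds $\int (u-u^--\delta_0)_+\,dx$ by $(M-m)(b-a)$, which plays the role of the paper's non-increasing area above the level $u^-+\delta_0$. The Oleinik triangle of height $\delta_1/2$ and base $c_1t\delta_1/2$ then gives exactly the constant in $T_1$, and your lower bound in (ii) also matches. One small slip: the quantity $f'(u)_x$ does not satisfy the inequality you state unless $f$ is quadratic, because unsigned terms $-2f'''(u)u_xu_{xx}-f''''(u)u_x^3$ appear. Work instead with $u_x$, which satisfies $(u_x)_t+f'(u)(u_x)_x=(u_x)_{xx}-f''(u)u_x^2$, to get (\ref{Ole}).

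Part (iii), however, has a genuine gap. The clipped solution $v$ carries no information about oscillations of $u$ inside $[u^+-\delta_0,\,u^-+\delta_0]$. If $\bar u$ already takes values there, then $\bar v=\bar u$ and $v\equiv u$, so nothing of the form ``one of the two plateaus differs from $v$ by order $\delta_1$'' can follow; $v$ is trapped in the range, not pointwise near $u^\pm$. What is missing is a quantity that measures upward jumps and does not increase in time. The paper uses the area of the region enclosed between the graph of $u(t,\cdot)$ and a horizontal line $u=k$, with $k$ strictly between $u^++\delta_0$ and $u^--\delta_0$. Since $\bar u>k$ for $x\le a$ and $\bar u<k$ for $x\ge b$, this region is bounded at $t=0$ with area at most $(M-m)(b-a)$. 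The area cannot grow: on a component $(y_1,y_2)$ of $\{u<k\}$ one finds $\frac{d}{dt}\int_{y_1}^{y_2}(k-u)\,dx=u_x(t,y_1)-u_x(t,y_2)\le 0$. Take $k$ slightly below $\min\{u(t,x_2),\,u^--\delta_0\}$; part (i) gives $k-u(t,x_1)>\delta_1/2$. Then (\ref{Ole}) applied to the right of $x_1$ places inside this region a triangle of area larger than $c_1t\delta_1^2/8\ge (M-m)(b-a)$, a contradiction.

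The upper bound in (ii) is also not established. Your energy estimate for $\int (w-\ell(u))_+\,dx$ is asserted rather than derived: both the dissipation rate $c_1\delta_1^2$ and the initial size are read off from the formula for $T_2$. The ``total variation budget'' you invoke is not available either, since (\ref{idbox}) lets the tails oscillate within $\delta_0$ indefinitely, so $u(T_1,\cdot)$ need not have finite total variation. Your fallback barrier is left unspecified, and the two obstacles you correctly identify (no control of $w$ at $t=T_1$, and degeneracy of $u_x^2$) are exactly what it must overcome.

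The paper's key device is the explicit upper solution (\ref{w+def}), $w^+(t,u)=p(u)/(t-T_1)+q(u)+\delta_1/2$. Here $q$ is the chord of $f$ over $[u^+-\delta_1,\,u^-+\delta_1]$, and $p$ is the concave quadratic with $p''=-1$ and $p=1$ at both endpoints, so $p\ge 1$.
\begin{itemize}
\item Because $w^+\to+\infty$ as $t\downarrow T_1$, no initial comparison is needed.
\item Because $w^+-f\ge\delta_1/2$, at any point where the curve $\gamma(t)$ could first touch the graph of $w^+$ one has $u_x^2=(w-f)^2\ge\delta_1^2/4$. So the degeneracy is harmless there, and touching at points with $u_x=0$ (where $w=f(u)$) is impossible.
\item The supersolution inequality for (\ref{24}) reduces to $p(u)/(t-T_1)\le\delta_1^2/4$. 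This is what produces the second entry of $T_2$, and for such $t$ it gives $w\le q(u)+\delta_1$, the upper boundary of $\Omega^{\delta_1}$.
\end{itemize}
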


{\bf Proof.} 
{\bf 1.} Observe  that the area below  the graph of $u(t,\cdot)$ and above the line $u= u^-+\delta_0$.
 is non-increasing in time, by the entropy condition.  Hence, by the assumption~\eqref{idbox} on the initial datum $\overline u$, one has
$$A(t)~\doteq~\int \bigl[u(t,x)- u^- - \delta_0\bigr]_+\, dx~\leq~\int \bigl[\bar u(x)- u^- - \delta_0\bigr]_+\, dx
~\leq~(M-u^-)(b-a).$$

Moreover, by genuine nonlinearity we have Oleinik's inequality
\bel{Ole}u_x(t,x)~\leq~{1\over c_1 t}\qquad\forall t>0,~ x\in\R.\eeq
Set
$$h(t)~\doteq~\sup_x \bigl[ u(t,x) - u^- - \delta_0\bigr]_+\,.$$
For any $0<\eta<h(t)$, there is a point $x_{\eta}$ such that that a triangle with vertices $(x_{\eta},h(t)-\eta)$, $(x_{\eta}+h(t)c_{1}t,0)$, $(x_{\eta},0)$ lies entirely below the graph of $u$. This implies
$${1\over 2} h^2(t)\cdot c_1 t~\leq~\int \bigl[\bar u(x)- u^- - \delta_0\bigr]_+\, dx~\leq~(M-u^-)(b-a),$$
and hence
$$h(t)~\leq~\sqrt{2(M-u^-)(b-a)\over c_1 t}\,.$$
Similarly, calling 
$$k(t)~=~\sup_x \bigl[  u^+ - \delta_0-u(t,x) \bigr]_+\,,$$
one finds
$$k(t)~\leq~\sqrt{2(u^+-m)(b-a)\over c_1 t}\,.$$
We thus conclude that for all times $t>0$ and all $x\in\R$ one has
\bel{umax}
u^+-\delta_0 -\sqrt{2(u^+-m)(b-a)\over c_1 t}~\leq~
u(t,x)~\leq~u^- + \delta_0 + \sqrt{2(M-u^-)(b-a)\over c_1 t}\,.\eeq
The statement (i) is an immediate consequence of the estimate, for $t~\geq~T_1$, 
\[
\delta_{1}~\geq~ \delta_0 +\sqrt{2(M-m)(b-a)\over c_1 T_{1}}
~\geq~ \delta_0 +\sqrt{2\max\{u^{+}-m,M-u^{-}\}(b-a)\over c_1 t}\,,
\]
since by assumption $\delta_{1}/2\leq \delta_{1}-\delta_{0}$.
\v
{\bf 2.} To prove (ii), observe that, by Oleinik's inequality,
\bel{decay}u_x(t,x)~\leq~{1\over c_1 t}\,,\qquad\qquad w(t,u)~\geq~f(u) - {1\over c_1 t}\,.\eeq
This yields the lower bound on $w$, when $T_2> {1\over c_1\delta_1}$.
To prove an upper bound,
consider the polynomials 
$$p(u)=A+Bu - (u^2/2),\qquad\qquad q(u) = A'+B'u,$$ choosing $A,B,A',B'$
so that
$$p(u^+-\delta_1) ~=~p(u^-+\delta_1)~=~1, \qquad\qquad q(u^+-\delta_1) = f(u^+-\delta_1), \quad q(u^-+\delta_1) = f(u^-+\delta_1).$$
More explicitly:
\[
A~=~1- { (u^{-}+\delta_{1}) (u^{+}-\delta_{1})\over 2}\,, \qquad B~=~\frac{u^{-}+u^{+}}{2}\,,
\]
  \[A'~=~ -\frac{f(u^-+\delta_1)\left( u^+-\delta_1\right)-f(u^+-\delta_1)  \left( u^-+\delta_1\right) }{(u^{-}+\delta_1)-({u^{+}-\delta_1)}}\,,\qquad
   B'~=~-\frac{f(u^+-\delta_1)-f(u^-+\delta_1)}{(u^{-}+\delta_1)-({u^{+}-\delta_1)}}\,.
  \]

Notice that both $p$ and $q-f$ are nonnegative and
\[\max_{u\in [ u^+-\delta_1,\,u^-+\delta_1]}p(u)~=~p(B)~=~ 1+{(u^{-}-u^{+}+2\delta_{1})^{2}\over 8}~\geq~ 1+{\delta_{1}^{2}\over 2}\,.\]

By the definition of $p$ and $q$, the function
\bel{w+def}w^+(t,u)~=~{p(u)\over  t-T_1}+ q(u)+{\delta_{1}\over 2}\eeq
satisfies $w^+_{uu}= {-1\over t-T_1}$ with  boundary values
\[
w^+(t,u^+-\delta_1)~=~{\delta_{1}\over 2}+{1\over  t-T_1}+ f(u^+-\delta_1)\,, \qquad w^+(t,u^-+\delta_1)={\delta_1\over 2}+{1\over  t-T_1}+ f(u^-+\delta_1)\,.
\] 
Furthermore, for $t>T_1+{8+ (u^{-}-u^{+}+2\delta_{1})^2\over 2\delta_{1}^{2}}$ one has  $w^+_{uu}(t,u)<0$ and
\[
{w^+_t(t,u)\over w^+_{uu}(t,u)}~=~ {p(u)\over  t-T_1}
~\leq~ {\delta_{1}^{2}\over 4}~\leq~\left({p(u)\over  t-T_1}+q(u)-f(u)+{\delta_1\over 2}\right)^{2}.
\]
This shows that $w^{+}$ is an upper solution to (\ref{24}) (see Fig.~\ref{f:asy38}, left), 
and we thus obtain (\ref{uomd}). 

\v

{\bf 3.} To prove  (iii) assume, by contradiction, that $x_1<x_2$ but 
\bel{e:absurdLemma}u(t, x_1)+2\delta_1 ~<~u(t, x_2) .\eeq
 By part (i) there holds $$u(t, x_{1})< (u^{-}+\delta_{1})-2\delta_{1}=u^--\delta_1\,,$$ therefore $u(t, x_{1})<  u^--\delta_1$.

As shown in Fig.~\ref{f:asy29}, right,
consider the bounded region between the graph of $u(t,\cdot)$ and the horizontal line
$u= u(t, x_2)$.  
The fact that we are cutting the epigraph (or the ipograph, in another case)
with a horizontal line at level  $u\in [ u^++\delta_1,~u^--\delta_1]$ guarantees 
that at time $t=0$ the enclosed region is bounded, with finite area.
 This area  decreases in time, and cannot be greater than
$(M-m)(b-a)$.    On the other hand, by (\ref{Ole}) this region contains 
a triangle with 
height 
$h=u(t, {x_2})-u(t, x_1)>2\delta_{1}$ and area
$${1\over 2} h^2 c_1T_1~\leq~(M-m)(b-a).$$
The choice (\ref{T1}) thus implies  $h\leq \delta_1$, reaching a contradiction with 
(\ref{e:absurdLemma}).
\endproof

\begin{figure}[ht]
\centerline{\hbox{\includegraphics[width=0.6\textwidth]{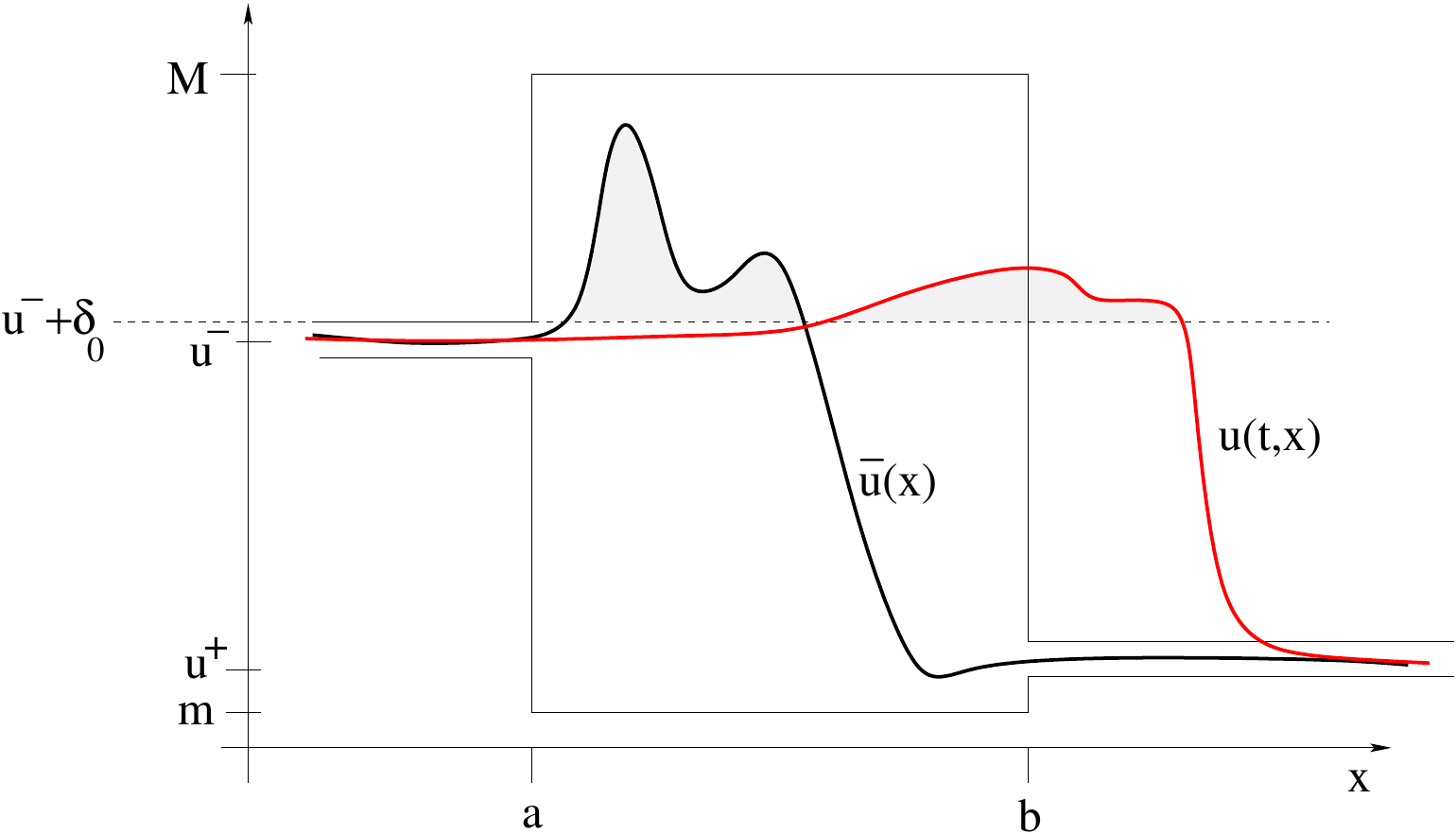}\qquad \includegraphics[width=0.45\textwidth]{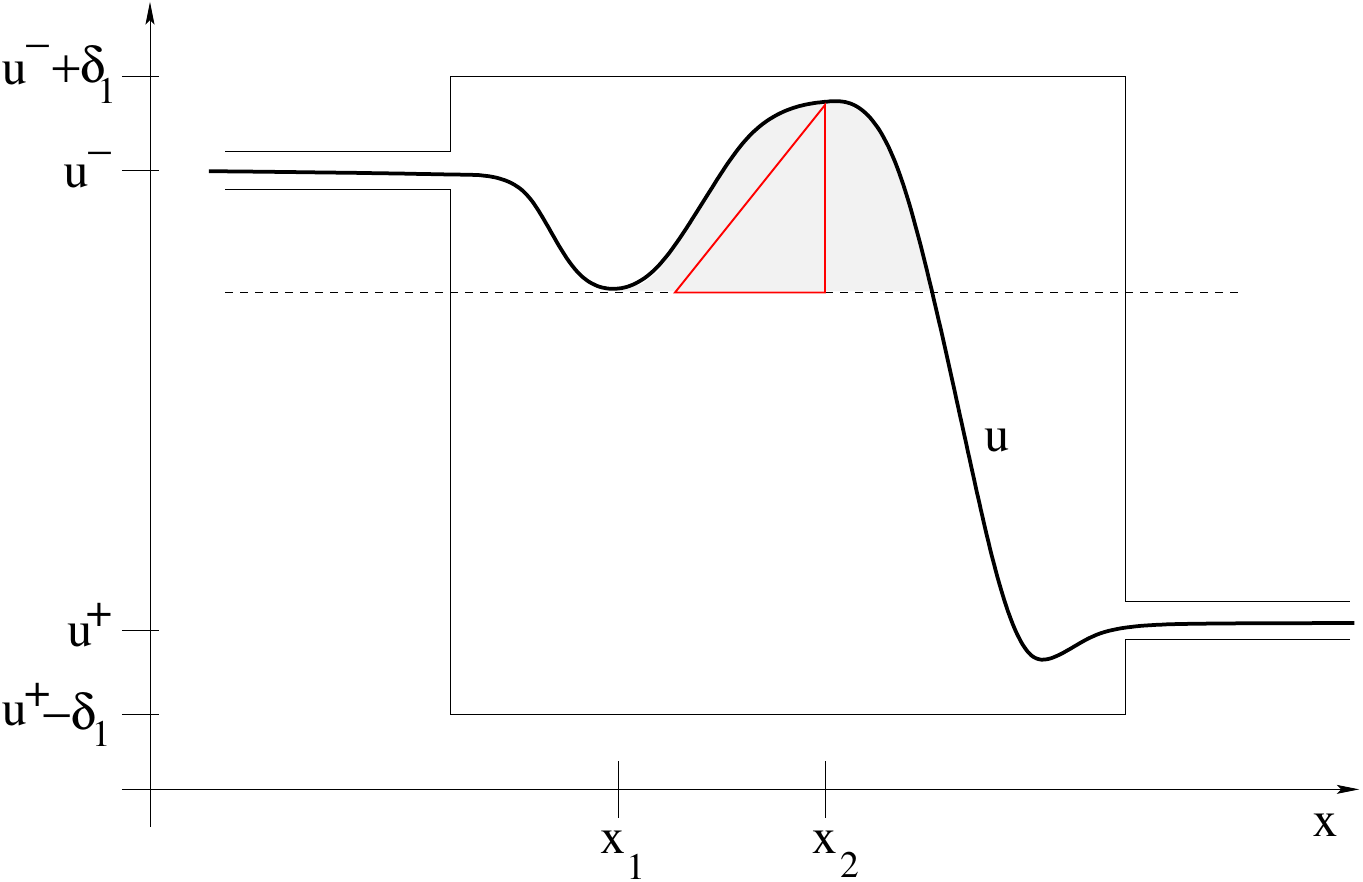}}}
\caption{\small Left: proof of Lemma~\ref{l:31}, part (i).   Right: the construction to prove part (iii).}
\label{f:asy29}
\end{figure}

\begin{figure}[ht]
\centerline{\hbox{\includegraphics[width=0.9\textwidth]{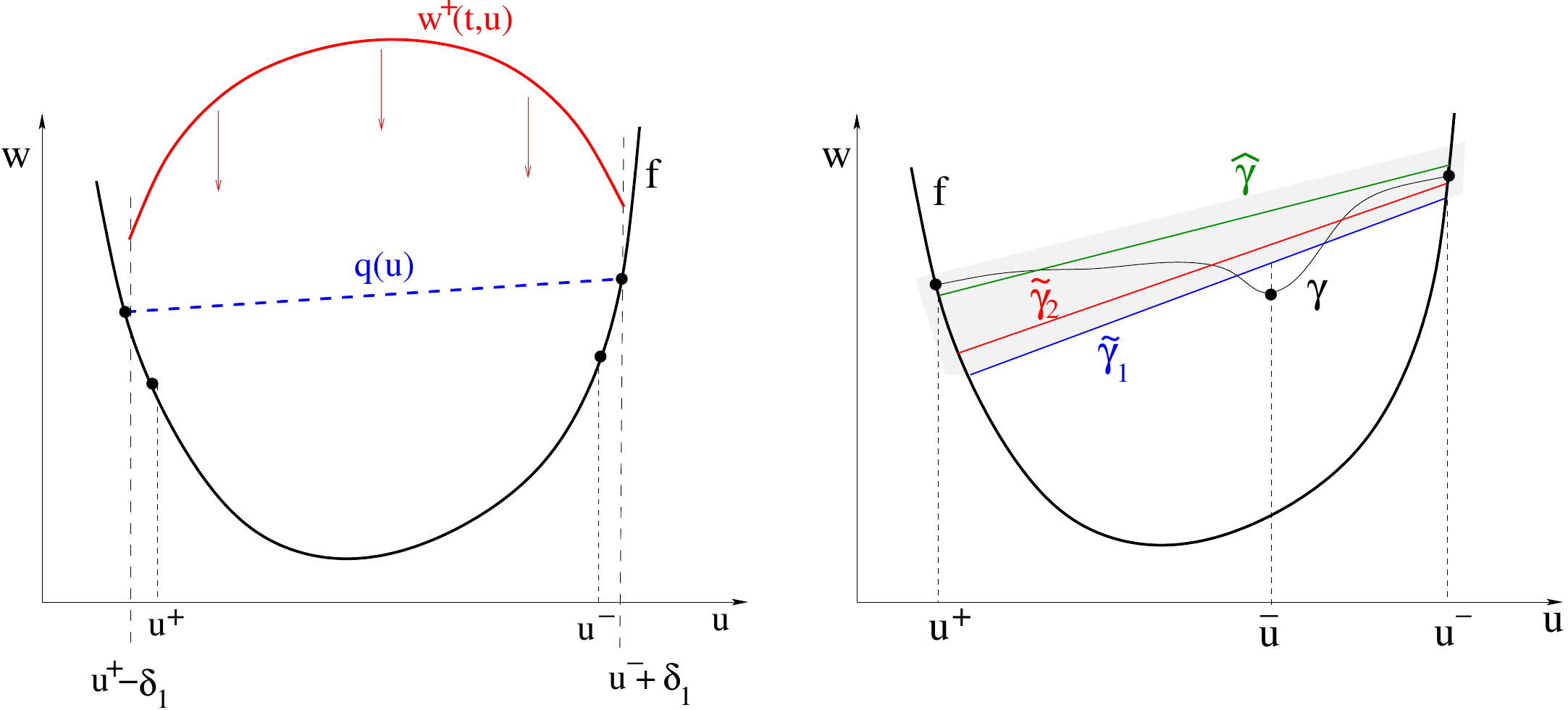}}}
\caption{\small Left: the upper solution $w^+(t,u)$ constructed at (\ref{w+def}).
Right: the construction used in the proof of Lemma~\ref{l:43}.
The curve $\gamma$ corresponding to the graph of $u(T_2,\cdot)$, as defined at (\ref{gamx}), and the 
segments $ \Tilde \gamma_1, \Tilde \gamma_2, \Hat\gamma$, corresponding to the traveling profiles 
$\Tilde \psi_1, \Tilde\psi_2, \Hat \psi$.  In (\ref{Tppmm}), the $u$-coordinates of the endpoints of the segments 
$ \Tilde \gamma_1, \Tilde \gamma_2, \Hat\gamma$ are denoted by $\Tilde \psi_1^\pm, \Tilde\psi_2^\pm, \Hat\psi^\pm$ respectively.}
\label{f:asy38}
\end{figure}

In the setting considered in Lemma~\ref{l:41}, for $t\geq T_2$ the  solution $u(t,\cdot)$ satisfies
all properties (\ref{uin}), (\ref{uomd})-(\ref{adec}).   However, it will no longer satisfy (\ref{idbox}), in general.
For a given $\delta^*>0$, we shall seek an interval $\bigl[a^*(t), b^*(t)\bigr]$ such that 
\bel{ab'}\left\{\bega{rl}&\big|\bar u(x)-u^-\big|\leq\delta_0+\delta^*\qquad \qquad \qquad\qquad x\leq a^*(t),
\\[1mm]
&\big|\bar u(x)-u^+\big|\leq\delta_0+\delta^*\qquad \qquad \qquad\qquad x\geq b^*(t).
\enda
\right.
\eeq
This will be achieved by constructing upper and lower solutions.

Consider an initial data $\bar u$ satisfying (\ref{idbox}).   An upper solution $\Hat u$ is obtained
by solving the linear Cauchy problem with constant coefficients
\bel{Usol}\Hat u_t + f'(M) \Hat u_x~=~\Hat u_{xx}\,,\qquad \qquad 
\Hat u(0,x)\,=\,\left\{ \bega{cl} M \qquad &\hbox{if}~~x<b,\\[1mm]
u^++\delta_0\qquad &\hbox{if}~~x>b.\enda\right.\eeq

Since $m\leq u\leq M$ and $f'$ increasing, while $\Hat u$ is decreasing, we have
$$\Hat u_t ~=~ -f'(M) \Hat u_x+\Hat u_{xx}~\geq~- f'(\Hat u) \Hat u_x+\Hat u_{xx} \,,$$
showing that $\Hat u$ is indeed an upper solution.  A direct computation yields
\bel{u2+}
\Hat u\bigl(t, x-f'(M) t\bigr)~=~u^+ + \delta_0 + 
{M-u^+-\delta_0\over 2\pi} \int_{-\infty}^{x/ \sqrt t} e^{-y^2}\, dy.\eeq
As proved in  \cite{CDS}, the complementary error function satisfies the bound
\bel{cerf}{2\over \sqrt \pi} \int_x^{+\infty}  e^{-y^2}\, dy~\leq~{e^{-x^2}+ e^{-2x^2}\over 2} ~\leq~  e^{-x^2} \qquad\qquad \forall x\geq 0.\eeq
Hence, for any given $\delta^*>0$, the last term in (\ref{u2+}) will be $\leq \delta^*$
provided that
$$e^{-x^2/t} ~\leq~{4\sqrt \pi \delta^*\over M-u^+-\delta_0}\,,\qquad\qquad x^2~\geq ~ - t \cdot \ln \left({4\sqrt \pi \delta^*\over M-u^+-\delta_0}\right).$$
In conclusion, by choosing
$$b^*(t)~=~b + f'(M) \, t  +\sqrt{t \cdot \left|\ln \left({4\sqrt \pi \delta^*\over M-u^+-\delta_0}\right)\right|} $$
we achieve
$$\Hat u(t,x)~\leq~u^+ + \delta_0  + \delta^*\qquad\forall x<a^*(t).$$
Similarly, a lower solution  can be constructed by solving
\bel{Lsol}u_t + f'(m) u_x~=~u_{xx}\,,\qquad u(0,x)\,=\,\left\{ \bega{cl} m \qquad &\hbox{if}~~x>a,\\[1mm]
u^--\delta_0\qquad &\hbox{if}~~x<a.\enda\right.\eeq
The above analysis can be summarized as

\begin{lemma}\label{l:42} Let $u=u(t,x)$ be a solution of (\ref{21}), for a smooth flux function $f$ satisfying (\ref{fuc}) and with initial data as in (\ref{idbox}).    Then for any $t>0$ and $\delta^*>0$ one has
\bel{ab2}\left\{\bega{rl}u(t,x) &\geq ~u^--\delta_0-\delta^* \qquad\qquad \forall x\leq a^*(t),
\\[1mm]
u(t,x)&\leq~ u^++\delta_0+\delta^*\qquad \qquad \forall x\geq b^*(t),
\enda
\right.\eeq
where
\bel{apbp}\bega{l}
\ds a^*(t)~\doteq~
a + f'(m) \, t  -\sqrt{t \cdot \left|\ln \left({4\sqrt \pi \delta^*\over M-m}\right)\right|}\,,
\\[6mm]
 \ds b^*(t)~\doteq~b + f'(M) \, t  +\sqrt{t \cdot \left|\ln \left({4\sqrt \pi \delta^*\over M-m}\right)\right|}\,.
\enda
\eeq
\end{lemma}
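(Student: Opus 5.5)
The plan is to use comparison with explicit upper and lower solutions, as in the computation preceding the statement. I treat the bound at $b^*(t)$ in detail; the bound at $a^*(t)$ follows by the symmetric argument.

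\emph{Step 1: the upper solution.} Let $\Hat u$ be the solution of the linear problem (\ref{Usol}). Its initial datum is $M$ for $x<b$ and $u^++\delta_0$ for $x>b$, so it dominates $\bar u$ by (\ref{idbox}). Since this initial datum is nonincreasing, $\Hat u(t,\cdot)$ stays nonincreasing for all $t>0$, so $\Hat u_x\le 0$. By the maximum principle $\Hat u$ takes values in $[m,M]$. Convexity of $f$ then gives $f'(\Hat u)\le f'(M)$, hence $-f'(M)\Hat u_x\ge -f'(\Hat u)\Hat u_x$. This shows that $\Hat u$ is a supersolution of the quasilinear equation (\ref{21}).

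\emph{Step 2: comparison.} By the comparison principle for (\ref{21}) we get $u\le \Hat u$. The hypotheses are met: bounded data, $u$ taking values in $[m,M]$ by the maximum principle, and $f'$ locally Lipschitz on $[m,M]$, which makes the difference of the two solutions satisfy a linear parabolic inequality with bounded coefficients.

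\emph{Step 3: explicit formula and tail bound.} Shifting to the moving frame $x\mapsto x-f'(M)t$ turns (\ref{Usol}) into the heat equation. Its solution is $u^++\delta_0$ plus $(M-u^+-\delta_0)$ times a Gaussian tail integral in the variable $(x-b-f'(M)t)/\sqrt{t}$, up to the normalisation constant $\sqrt{4t}$ versus $\sqrt{t}$. I will keep the paper's convention (\ref{u2+}) and track constants only up to harmless factors. Using the complementary error function bound (\ref{cerf}), the excess $\Hat u-(u^++\delta_0)$ is at most $(M-u^+-\delta_0)\,e^{-y^2}/(4\sqrt\pi)$ (or comparable) at $y=(x-b-f'(M)t)/\sqrt t$. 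Requiring this to be $\le\delta^*$ gives
\[
y^2\;\ge\;\Bigl|\ln\Bigl(\tfrac{4\sqrt\pi\,\delta^*}{M-u^+-\delta_0}\Bigr)\Bigr| .
\]
Since $M-u^+-\delta_0\le M-m$, the choice in (\ref{apbp}) with $M-m$ suffices whenever $4\sqrt\pi\delta^*<M-m$. In the opposite case the bound $u\le M\le u^++\delta_0+\delta^*$ is trivial anyway. Monotonicity of $\Hat u$ in $x$ then extends the estimate to all $x\ge b^*(t)$.

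\emph{Step 4: the lower bound at $a^*(t)$.} This is symmetric. Use the subsolution (\ref{Lsol}), whose datum is $u^--\delta_0$ for $x<a$ and $m$ for $x>a$. It is nonincreasing, so $\check u_x\le0$ and $f'(\check u)\ge f'(m)$, which gives $-f'(m)\check u_x\le -f'(\check u)\check u_x$. So $\check u$ is a subsolution, and the same tail estimate in the frame moving with speed $f'(m)$ yields $u\ge u^--\delta_0-\delta^*$ for $x\le a^*(t)$.

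The main obstacle is bookkeeping rather than ideas: getting the normalisation of the Gaussian correct so that the constant $4\sqrt\pi$ inside the logarithm is valid. If the constants do not match exactly, I would enlarge the square-root term by a fixed factor, e.g. use $\sqrt{4t\,|\ln(\cdot)|}$.
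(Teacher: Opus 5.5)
Your strategy is the paper's own. The computation preceding the lemma uses the same supersolution (\ref{Usol}), the same check via $\Hat u_x\le 0$ and $f'(\Hat u)\le f'(M)$, and the symmetric subsolution (\ref{Lsol}), and reads $a^*,b^*$ off (\ref{u2+}) and (\ref{cerf}). Your Steps~1, 2 and~4 are fine.

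The gap is in Step~3, where you ``keep the paper's convention'' (\ref{u2+}) and treat the discrepancy as a harmless factor. It is not harmless. Formula (\ref{u2+}) is misnormalized: it has $1/(2\pi)$ instead of $1/\sqrt\pi$, the variable $x/\sqrt t$ instead of $x/(2\sqrt t)$, and it integrates over the wrong half-line. The constants $4\sqrt\pi$ and $\sqrt{t\,|\ln(\cdot)|}$ in (\ref{apbp}) come only from it. The actual solution of (\ref{Usol}) is
\[
\Hat u(t,x)~=~u^++\delta_0+\frac{M-u^+-\delta_0}{2}\,\mathrm{erfc}\Bigl(\frac{X}{2\sqrt t}\Bigr),\qquad X\doteq x-b-f'(M)t,
\]
so (\ref{cerf}) yields $\Hat u-u^+-\delta_0\le\frac{M-u^+-\delta_0}{2}\,e^{-X^2/(4t)}$, not $\frac{M-u^+-\delta_0}{4\sqrt\pi}\,e^{-X^2/t}$.

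In fact no argument can give (\ref{apbp}) as written, because the statement with these constants fails for small $t$. Take $f(u)=u^2/2$ and $\bar u=M$ for $x<b$, $\bar u=u^++\delta_0$ for $x\ge b$; this satisfies (\ref{idbox}) with $u^-=M$, $m=u^+$ and any $a<b$. Duhamel's formula gives $\|u(t,\cdot)-G(t)*\bar u\|_{\L^\infty}=\O(1)\cdot\sqrt t$, with $G$ the heat kernel, hence
\[
\lim_{t\to 0+}\Bigl[u\bigl(t,b^*(t)\bigr)-u^+-\delta_0\Bigr]~=~\frac{M-u^+-\delta_0}{2}\,\mathrm{erfc}\Bigl(\tfrac12\sqrt{L}\Bigr),\qquad L\doteq\Bigl|\ln\frac{4\sqrt\pi\,\delta^*}{M-m}\Bigr|.
\]
For $M-m=1$, $\delta_0=10^{-2}$, $\delta^*=10^{-6}$ this limit is about $7\cdot 10^{-3}\gg\delta^*$. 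So your Step~3 and the paper's derivation break at the same place.

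Your fallback of enlarging the square-root term is the right repair, but as stated it is not enough. With $\sqrt{4t\,|\ln(4\sqrt\pi\delta^*/(M-m))|}$ you only get an excess $\le 2\sqrt\pi\,\delta^*$. No fixed multiplicative enlargement works when $4\sqrt\pi\,\delta^*$ is close to $M-m$: the square root then vanishes, while the excess at $X=0$ is $\frac{M-u^+-\delta_0}{2}$. You must change the logarithm too:
\[
a^*(t)~\doteq~a+f'(m)\,t-2\sqrt{t\,\bigl|\ln\bigl(2\delta^*/(M-m)\bigr)\bigr|}\,,\qquad b^*(t)~\doteq~b+f'(M)\,t+2\sqrt{t\,\bigl|\ln\bigl(2\delta^*/(M-m)\bigr)\bigr|}\,.
\]
With these, the bound holds for all $t,\delta^*>0$:
\begin{itemize}
\item if $2\delta^*<M-m$, the tail estimate gives an excess $\le\frac{M-u^+-\delta_0}{M-m}\,\delta^*\le\delta^*$, since $u^++\delta_0\ge m$ by (\ref{idbox});
\item if $2\delta^*\ge M-m$, use $\mathrm{erfc}\le 1$ for $X\ge 0$ to get an excess $\le\frac{M-m}{2}\le\delta^*$.
\end{itemize}
This also replaces your treatment of the ``opposite case'', which is wrong as written: $4\sqrt\pi\,\delta^*\ge M-m$ does not imply $M\le u^++\delta_0+\delta^*$.

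The correction costs nothing downstream. The lemma is only used at $t=T_2$ with $\delta^*=e^{-T_2}$ to get $b^*-a^*=\O(T_2)$, and the corrected formula still gives that.
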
 

When $t= T_2$ is the time in (\ref{T2}),
choosing $\delta^* = e^{-T_2}$, from (\ref{apbp}) it follows
$$a^*(T_2)~\geq~a- C_2T_2,\qquad b^*(t)\leq b+C_2T_2,$$
for a suitable constant $C_2$.   Notice that, without loss of generality,
by a coordinate shift we can assume $a^*= -b^*$.

To study what happens during the third phase, we consider an initial data $\bar u(x) = u(T_2, x)$ satisfying
\bel{5.1}\left\{\bega{cl}\big|\bar u(x)-u^-\big|\leq\delta_0+\delta^*\qquad  \qquad & x\leq -b^*\,,
\\[1mm]
\big|\bar u(x)-u^+\big|\leq\delta_0+\delta^*\qquad \qquad  & x\geq b^*\,,
\\[1mm]
\bar u(x)\in \big[u^+-\delta_1\,,~u^-+\delta_1\big]\qquad\qquad & x\in\R\,.\enda
\right.
\eeq
together with
\bel{5.2} 
 \bigl( \bar u, f(\bar u)-\bar u_x\bigr)\,\in\,\Omega^{\delta_1}\qquad\forall x\in\R,\eeq
\bel{5.3}
x_1< x_2\qquad\implies\qquad \bar u(x_1)~\geq~\bar u(x_2) - 2\delta_1\,.\eeq

\begin{lemma}\label{l:43} There exist constants $C, C'$ such that the following holds.
 Let $u=u(t,x)$ be a solution of (\ref{21}) whose initial data at time $T=T_2$ 
 satisfy (\ref{5.1})--(\ref{5.3}),
 with $\delta_0+\delta^*<\delta_1/2$. 
 Then for any 
$\delta\geq C' \delta_1$, the solution $u(t,\cdot)$ satisfies the inequality 
(\ref{3.3}) for every $x\in\R$, provided that
\bel{5.18}t~\geq~T_3~\doteq~T_2+ {C |\ln\delta_1|+2 b^*\over \delta}\,.\eeq
\end{lemma}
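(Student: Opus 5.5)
We work throughout in the $(u,w)$ plane, with $w=f(u)-u_x$ regarded as a function $w(t,u)$ that solves (\ref{24}) along the monotone pieces of the graph. The idea is to trap the curve $\gamma(t,\cdot)$ from (\ref{gamx}) between an upper and a lower barrier.

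\textbf{Barriers.} By (\ref{5.2}) the curve already lies in $\Omega^{\delta_1}$ at time $T_2$, so it stays below the chord $\Tilde w+\O(1)\delta_1$. Hence the upper half of (\ref{3.3}) holds from the start, provided $C'$ is large. The work is the lower bound $w\ge \Tilde w(u)-\delta$.

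For the lower barrier we use a family of "partial chords". These are the traveling profiles $\Tilde\psi_1,\Tilde\psi_2,\Hat\psi$ of Fig.~\ref{f:asy38}: straight segments in the $(u,w)$ plane joining two points of the graph of $f$. Each one is a stationary solution of (\ref{24}) by (\ref{wu}), and by convexity of $f$ each lies below $\Tilde w$. The comparison principle for (\ref{24}) on monotone arcs (equivalently, for (\ref{21}) in the original variables) will be used to show that $\gamma$ stays above such a segment once it starts above it. The segments are then moved upward until they reach the chord $P^-P^+$.

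\textbf{Bootstrap in $x$-space.} We argue through the evolution of $u$ itself.
\begin{itemize}
\item[1.] At time $T_2$, the data (\ref{5.1}) and (\ref{5.3}) say $u$ is within $\delta_0+\delta^*$ of $u^\mp$ outside $[-b^*,b^*]$, and is almost decreasing.
\item[2.] Compare $u$ with traveling viscous shocks $\Tilde\psi_1$ (states $u^--\delta_1$ and an intermediate state) and $\Tilde\psi_2$, placed to the left and right of the transition region.
\item[3.] Because of the speed gaps, the front connecting $u^-$ to intermediate values travels faster than the one connecting intermediate values to $u^+$. The two fronts therefore compress together at a relative speed of order $\delta$ whenever $\gamma$ lies a distance $\ge\delta$ below $\Tilde w$.
\end{itemize}
In $(u,w)$ terms, a gap of size $\delta$ below the chord forces the slope $w_u=-u_t/u_x$ of some sub-arc to differ from $\lambda$ by at least $c\delta$. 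That sub-arc then moves relative to the main front at speed $\ge c\delta$. It must cross a region of width $2b^*$, plus the width of the viscous tails, which is $\O(|\ln\delta_1|)$ because the profiles decay exponentially to within $\delta_1$ of their end states. This accounts for the time bound $\bigl(C|\ln\delta_1|+2b^*\bigr)/\delta$ in (\ref{5.18}).

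Concretely, we construct $\Hat\psi$ by connecting $u^-$ to $u^+$ through two successive segments. One segment has slope $\lambda-\delta'$ and lies above $\gamma$'s lower envelope; the other is placed symmetrically. We then show that $\gamma(t)$ stays above the convex combination of these segments. In transformed variables, the vertex of this piecewise-linear subsolution moves up at rate $\gtrsim\delta$ until it is within $\delta$ of $\Tilde w$.

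\textbf{Expected obstacle.} The hardest step is the quantitative transit-time estimate: showing that a lower barrier sitting $\delta$ below the chord is pushed up within the prescribed time. Because (\ref{24}) degenerates where $w=f(u)$, and $u(t,\cdot)$ is only almost monotone, the comparison must be carried out in $x$-variables. There I would compare $u$ with translated viscous profiles $\Tilde\psi_i(x-\lambda_i t-x_i)$ whose intermediate states are chosen $\delta/C$ inside the chord. The shifts $x_i$ are fixed from the initial bounds, with $|x_i|\le b^*+C|\ln\delta_1|$. One then notes that the two barriers collide by time $T_3$. After the collision, a single comparison with $\Hat\psi$ gives the lower half of (\ref{3.3}) everywhere. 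The $2\delta_1$ non-monotonicity from (\ref{5.3}) and the errors $\delta_0+\delta^*<\delta_1/2$ are absorbed by taking $\delta\ge C'\delta_1$.
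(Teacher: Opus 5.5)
Your upper half of (\ref{3.3}) matches the paper, which uses the invariance of the convex set $\Omega^{\delta_1}$ from (\ref{5.2}). The lower half, however, is the whole content of the lemma, and it is not proved.

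\textbf{The constructions you give do not work.} The one explicit construction is a broken line in the $(u,w)$ plane from $P^-$ to $P^+$ whose vertex rises at rate $\gtrsim\delta$. This is not a subsolution of (\ref{24}). On the two linear pieces $w_{uu}=0$, while $w_\tau>0$ as soon as the vertex moves up, and the convex kink does not help. As you note yourself, (\ref{24}) also degenerates at the endpoints, and $\gamma$ is not a graph over $u$.

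Your fallback in $x$-variables does not close the gap either:
\begin{itemize}
\item Pointwise ordering of $u$ against translated profiles only bounds $u$ itself. The lower half of (\ref{3.3}) is a one-sided bound on the slope, namely $u_x\le f(u)-\Tilde w(u)+\delta$. ``Comparison for (\ref{21})'' is therefore not equivalent to ``$\gamma$ stays above the segment.''
\item A profile joining $u^--\delta_1$ to an intermediate state cannot stay below $u$ globally, since its right state exceeds $u^+$.
\item ``The two barriers collide, then a single comparison with $\Hat\psi$ gives the lower bound'' has no mechanism behind it.
\item Two fronts compressing at relative speed $\sim\delta$ is a heuristic, not an estimate.
\end{itemize}

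\textbf{The missing idea} is to turn the slope bound into an intersection count, and then get the time bound by tracing backward.

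If $f(u)-u_x<\Tilde\gamma_1(u)$ at $(t,\bar x)$, then the translate of $\Tilde\psi_1$ through $(\bar x,u(t,\bar x))$ crosses $u(t,\cdot)$ upward there. Because of the signs of $u-\Tilde\psi_1$ at $\pm\infty$, it must then cross at least three times. The task is to rule out this configuration.

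Take $\Tilde\psi_1,\Tilde\psi_2$ with the same speed $\Tilde\lambda=\Hat\lambda+\delta$. This is strictly larger than the speed of an upper barrier satisfying $u\le\Hat\psi(x-\Hat\lambda t)$ for all $t\ge T_2$. Going back to time $T_2$, the offending translates lie at least $\delta(t-T_2)$ to the left of $\Hat\psi$. So once $\delta(t-T_2)\ge 2b^*+C|\ln\delta_1|$, they are within $\delta_1$ of their right states wherever $u(T_2,\cdot)$ is not already close to $u^-$.

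The right states of $\Tilde\psi_1$ and $\Tilde\psi_2$ differ by $3\delta_1$. Together with the almost-monotonicity (\ref{5.3}), this forces the set where $u(T_2,\cdot)>\Tilde\psi_1$ to lie entirely to the left of the set where $u(T_2,\cdot)<\Tilde\psi_2$. A Sturm-type comparison propagates this ordering forward in time. It yields a single crossing with $\Tilde\psi_1$ at time $t$, which is a contradiction.

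So the $\delta$ entering (\ref{5.18}) is the speed offset between the test profiles and the upper barrier, not a speed gap between two merging fronts. Your transit-time intuition is right, but this crossing-count and backward-tracing argument is what your proposal lacks.
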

\v

\begin{figure}[ht]
\centerline{\hbox{\includegraphics[width=1.0\textwidth]{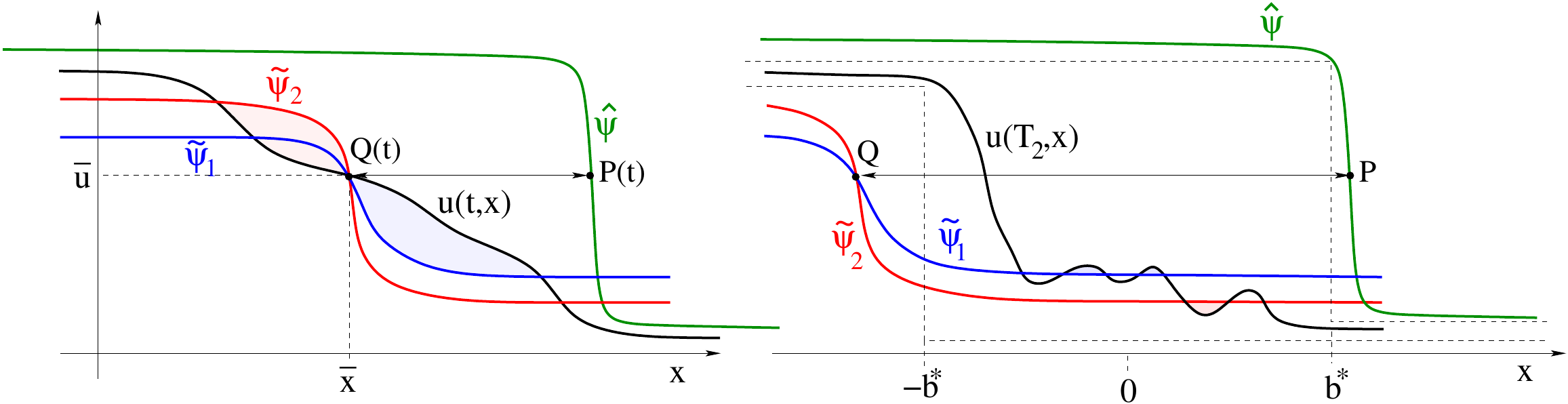}}}
\caption{\small  Left: in the setting shown in Fig.~\ref{f:asy38}, right, there exists a bounded region
where $u(t, x)<\Tilde\psi_2(x-\Tilde \lambda t)$, $x<\bar x$, and a region where
$u(T, x)>\Tilde \psi_1(x-\Tilde \lambda t)$, $x>\bar x$.
Right: if $t-T_2$ is sufficiently large, at the earlier  time $T_2$ the region where $u(T_2,x) >\Tilde\psi_1(x-\Tilde \lambda T_2)$
lies entirely to the left of the region where  $u(T_2,x) <\Tilde\psi_2(x-\Tilde \lambda T_2)$. 
This leads to a contradiction.
Here $\Hat \psi$ is a traveling profile that yields an upper bound for $u$ at all times $t\geq T_2$.
 }
\label{f:asy48}
\end{figure}

{\bf Proof.} 
{\bf 1.} The assumption (\ref{5.2}) implies that at the initial time 
the upper estimate \bel{33}  f\big(u(x)\big)-u_x(x) - \Tilde w\big(u(x)\big)~\leq~\delta
\qquad\qquad\forall x\in\R,
\eeq
is already achieved, with $\delta=\delta_1$.   
Since the domain $\Omega^{\delta_1}$ is convex, hence positively invariant, this upper estimate remains valid for all 
$t\geq T_2$.
%

{\bf 2.} 
To provide a lower bound, we shall compare the solution $u=u(t,s)$ with  three traveling wave profiles:
$\Hat \psi, \Tilde \psi_1,\Tilde\psi_2$.
(see Fig.~\ref{f:asy48}).
The asymptotic limits of these traveling profiles will be denoted by
\bel{limpsi}\Hat \psi^\pm~\doteq~\lim_{s\to\pm\infty} \Hat\psi(s), \qquad\qquad \Tilde \psi_i^\pm~\doteq~\lim_{s\to\pm\infty} \Tilde \psi_i(s),\quad i=1,2.\eeq
We denote by $\Hat \lambda$ the speed of the traveling profile $\Hat \psi$, and  by $\Tilde\lambda$ the common speed of $\Tilde\psi_1,\Tilde\psi_2$.
These values will be chosen so that 
\bel{Tpm} \Hat \psi^-~=~u^- + 3\delta_1, \qquad  \Hat \psi^+~=~u^+ + 3\delta_1,
\qquad\qquad     \Tilde \psi_2^-~=~u^- - 3\delta_1\,, \qquad \Tilde \psi_1^+ = \Tilde\psi_2^++ 3\delta_1\,.
\eeq
Calling $\lambda\doteq {f(u^-)-f(u^+)\over u^--u^+}$ the Rankine-Hugoniot speed of a shock with left and right states $u^->u^+$,  the first two equalities in (\ref{Tpm}) imply that the speed of the  first traveling profile satisfies
$$\Hat\lambda~\doteq~{f(\Hat \psi^-) - f(\Hat\psi^+)\over \Hat \psi^- - \Hat\psi^+}~>~\lambda.$$
We choose the common speed of the two remaining traveling profiles to be
\bel{Tspeed}\Tilde \lambda~=~\Hat\lambda +\delta\,.\eeq
Notice that $\Hat \lambda $ is the slope of the segment $\Hat \gamma$ in Fig.~\ref{f:asy38}, right, while
$\Tilde\lambda$ is the slope of the two parallel segments $\Tilde\gamma_1, \Tilde\gamma_2$.
Notice that, having assigned $\Tilde\psi_2^-$ as in (\ref{Tpm}), the limit $\Tilde\psi_2^+$ is uniquely determined by 
the second of the Rankine-Hugoniot conditions 
\bel{TRH}  {f(\Tilde \psi_1^-) - f(\Tilde \psi_1^+)\over \Tilde  \psi_1^- - \Tilde \psi_1^+}~=~\Tilde\lambda~\doteq~{f(\Tilde \psi_2^-) - f(\Tilde \psi_2^+)\over \Tilde  \psi_2^- - \Tilde \psi_2^+}\,.\eeq
In turn, the last identity in (\ref{Tpm}) determines $\Tilde \psi_1^+$. At last, $\Tilde\psi_1^-$
is uniquely determine by the second of the Rankine-Hugoniot conditions.
Summarizing the above construction, we have (see Fig.~\ref{f:asy38}, right)
\bel{Tppmm}
u^{+} 
~<~
\Hat \psi^+
~<~
\Tilde \psi_2^+
~<~
\Tilde \psi_1^+
~<~
\Tilde \psi_1^-
~<~
\Tilde \psi_2^-
~<~
u^{-}
~<~
\Hat \psi^-.
\eeq

\v
{\bf 3.} Next,
referring to Fig.~\ref{f:asy38}, right, assume that there exists a point $\bar u = u( t, \bar x)$ such that
\bel{cross1}f(u(t, \bar x)) - u_x ( t, \bar x)~<~\Tilde\gamma_1(\bar u).
\eeq
By performing a horizontal shift of the graphs of $\Tilde \psi_1,\Tilde \psi_2$, we can assume 
\bel{cross2}
\Tilde  \psi_1(\bar x-\lambda  t)~=~\Tilde  \psi_2(\bar x-\lambda  t)~=~u( t, \bar x)~=~\bar u\,,
\eeq
As shown in Fig.~\ref{f:asy48}, left, the inequality (\ref{cross1})  means that the graph of $\Tilde\psi_1$ crosses the graph of $u(t,\cdot)$ 
downwards, at the point $\bar x$. Hence the two graphs must also intersect at two additional  points, 
because $u^->\Tilde\psi_1^-> \Tilde\psi_1^+>u^+$.
\v
{\bf 4.}
We now go back in time, and look at the relative position of the graphs of $u, \Tilde\psi_1,\Tilde\psi_2,\Hat \psi$
at time $T_2$ (see Fig.~\ref{f:asy48}, right).  We choose the smallest possible 
shift in the profile $\Hat \psi$ so that
\bel{Hatp} u(T_2,x)~\leq~\Hat\psi(x-\Hat\lambda T_2)\qquad\qquad\forall x\in \R.\eeq
 By assumption, $u(T_2,\cdot)$ satisfies the bounds in (\ref{5.1}).
Consider the points $Q$ and  $P$ where 
$$\Tilde \psi_1(Q-\Tilde\lambda T_2)~=\Tilde \psi_2(Q-\Tilde\lambda T_2)~=~\bar u,
\qquad\qquad \Hat \psi(P-\Hat \lambda T_2)=\bar u.$$
Since the speeds of the traveling profiles satisfy $\Tilde \lambda =\Hat\lambda+\delta$, we have
$|P-Q|\geq (t-T_2)\delta$.   If $t$ is large enough (to be determined later), this implies that only possible 
intersections of the graph of $u(T_2,\cdot)$ with the graphs of $\Tilde\psi_1,\Tilde\psi_2$ occur in the region where  
\bel{nearlim}|\Tilde\psi_1- \Tilde \psi_1^+|<\delta_1, \qquad\qquad |\Tilde\psi_2- \Tilde \psi_1^+|<\delta_1\,.\eeq
By the second identity in (\ref{Tpm}), we have the implication
\bel{impl}\left\{ \bega{l} u(T_2,x)~\geq~ \Tilde \psi_1(x-\Tilde \lambda T_2),\\[1mm]
 u(T_2,y)~\leq ~ \Tilde\psi_2(y-\Tilde \lambda T_2),\enda\right.\qquad\implies\qquad x<y.\eeq
Indeed, since $\Tilde \psi_1^+ - \Tilde\psi_2^+ = 3\delta_1$, by (\ref{Tpm})  this follows from the fact that (\ref{nearlim}) implies $\bar u(x)\geq \bar u(y) -2\delta_1$ and the  ``almost monotonicity" property (\ref{5.3}).

\begin{figure}[ht]
\centerline{\hbox{\includegraphics[width=1.0\textwidth]{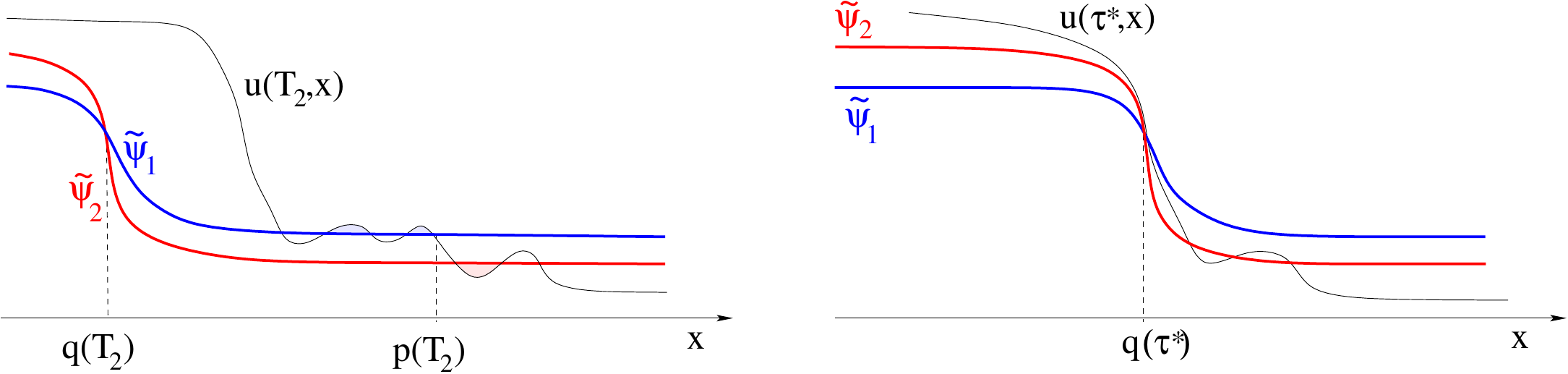}}}
\caption{\small  Left: at time $t= T_2$, the crossings between the graphs of $u$ and $\Tilde\psi_1$ remain to the left of the crossings
between the graphs of $u$ and $\Tilde \psi_2$.   Right: for $t-T_2$ large enough, the graphs of $u(t,\cdot)$ and 
$\Tilde\psi_1$ cross at a single point.}
\label{f:asy35}
\end{figure}

{\bf 5.} By \eqref{impl} 
at time $T_2$  the region where 
$u(T_2,x)> \Tilde \psi_1(x-\Tilde \lambda T_2)$
lies entirely to the left of the region where $u(T_2,x)<  \Tilde \psi_2(x-\Tilde \lambda T_2)$
(see Fig.~\ref{f:asy35}, left).
We will  show that this property is retained at all subsequent times. 
As a consequence, if (\ref{cross2}) holds,
this will lead to a contradiction.

Toward our goal, for $\tau\in [T_2, t]$, consider the 
 right-most intersection (see  Fig.~\ref{f:asy35}, left)
\bel{e:comparisonp}p(\tau)~\doteq~\sup\bigl\{ x\,;~u(\tau,x)>\Tilde\psi_1(x-\Tilde \lambda \tau )\bigr\}
\qquad \hbox{so that}\qquad u\bigl(\tau, p(\tau)\bigr) = \Tilde \psi_1(x-\Tilde\lambda \tau).\eeq

At time $T_{2}$, (\ref{impl}) implies
\bel{inCond}
u(T_2,y)~\geq~\Tilde\psi_2(y-\lambda T_2)\qquad\text{for} \qquad y\leq x=p(T_2).
\eeq
Moreover, let $q(\tau)$ be the point where the two traveling profiles cross, so that
$$\Tilde \psi_1\bigl(q(\tau)- \Tilde \lambda \tau\bigr)~=~\Tilde \psi_2\bigl(q(\tau)-\Tilde \lambda \tau\bigr),\qquad\qquad q(T_{2})=Q.$$
Notice that  the choice of the shift at (\ref{cross2}) yields $u\bigl(t,q(t)\bigr)=
\Tilde \psi\bigl(q(t)-\Tilde \lambda t \bigr)$.

By (\ref{inCond}), in view of \eqref{Tppmm}, \eqref{nearlim} and \eqref{e:comparisonp},  as long as $p(\tau)>q(\tau)$ a comparison argument on the domain 
$$\bigl\{ (\tau,x)\,;~\tau\geq T_2, ~x\leq p(\tau)\bigr\}$$
yields
\bel{ine2}u(\tau,x)~\geq~\Tilde\psi_2(x-\Tilde \lambda \tau)\qquad\forall x\leq p(\tau).\eeq

Next, consider the time
$$\tau^*~=~\sup\bigl\{ \tau\in [T_2,t]\,;~p(\tau)>q(\tau)\bigr\}.$$
At time $\tau^*$, since $p(\tau^{*})=q(\tau^{*})$, by~\eqref{e:comparisonp} and \eqref{ine2} we have
$$u(\tau^*,x) \,\geq\, \Tilde \psi_2(x-\Tilde \lambda \tau^*)\,\geq\,\Hat \psi_1(x-\Tilde \lambda \tau^*)\qquad\forall x<q(\tau^*),$$
$$u(\tau^*,x) \,\leq\, \Tilde \psi_1(x-\Tilde \lambda \tau^*-\xi)\qquad\forall x>q(\tau^*).$$
Therefore, the graph of $u(\tau^*,\cdot)$ crosses the graph of $\Tilde \psi_1(\cdot -\Tilde\lambda \tau^*)$ at the single point $q(\tau^*)$.
As a consequence, for all times $\tau\geq \tau^*$ only one crossing of these two graphs can occur.
In particular, a single crossing occurs at time $t$ . This rules out the situation shown in Fig.~\ref{f:asy48}, left,
reaching the desired contradiction.
\v
{\bf 6.} To complete the proof, it remains to estimate more precisely the time interval which is needed to 
carry out the above construction.

Since  the traveling profiles converge to their asymptotic limits (\ref{limpsi}) at exponential rate,
the condition (\ref{Hatp}) will be satisfied if $P-b^*= C_1| \log \delta_1|$ for some constant $C_1$ 
Similarly, the conditions 
\bel{nlim}\Big|\Tilde\psi_1(x-\Tilde\lambda T_2)- \Tilde \psi_1^+\Big|\,<\,\delta_1, \qquad 
\Big|\Tilde\psi_2(x-\Tilde\lambda T_2)- \Tilde \psi_2^+\Big|\,<\,\delta_1
\qquad\qquad\forall~ x\geq -b^*,\eeq
will be achieved provided that  $-b^*-Q\geq C_2| \log \delta_1|$, for some constant $C_2$.

Referring to Fig.~\ref{f:asy48}, left, call $Q(t), P(t)$ the points such that
$$\Tilde \psi_1\bigl(Q(t)-\Tilde\lambda t\bigr)~=\Tilde \psi_2\bigl(Q(t)-\Tilde\lambda t\bigr)~=~\bar u,
\qquad\qquad \Hat \psi\bigl(P(t)-\Hat \lambda t\bigr)=\bar u.$$
Since $u(t,x)\leq \Hat\psi(x-\Hat\lambda t)$, we trivially have $P(t)\geq Q(t)$.
This implies
$$P-Q~=~P(t)-Q(t) + \delta(t-T_2)~\geq~(C_1+C_2) |\log\delta_1 | + 2b^*$$
provided that 
$$t-T_2~\geq~{(C_1+C_2) |\log\delta_1 | + 2b^*\over\delta}\,.$$
This yields (\ref{5.18}) with $C=C_1+C_2$, completing the proof.
\endproof

\v

Combining the three Lemmas \ref{l:41}, \ref{l:42}, and \ref{l:43}, one obtains
\begin{theorem}\label{t:41}
Let $f$ be a smooth flux function satisfying (\ref{fuc}), and assume that
the states $u^+<u^-$
are connected by a viscous shock profile $\phi(\cdot)$.
Then there exists a constant $C$ such that the following holds.
For every $\delta_0>0$ and $\delta>4\delta_0$ sufficiently small, if $\bar u$ is an initial
condition satisfying (\ref{idbox}), then the corresponding solution
$u=u(t,x)$ of (\ref{21}) satisfies
\bel{1.11}\Big\|
u(t,\cdot)-\phi\bigl(t, \cdot-c(t)\bigr)\Big\|_{\L^\infty}~\leq ~C\delta\,,\eeq
for a suitable shift $c(t)$ and all times $t>T = \O(1)\cdot (1+b-a) \,\delta^{-2}$.

Here $\O(1)$ denotes a quantity depending only on $f, u^-, u^+, m,M$, 
which remains uniformly bounded as $u^-, u^+$ range over a compact set and $u^--u^+$ remains uniformly positive.
\end{theorem}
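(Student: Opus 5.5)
The plan is to chain Lemmas~\ref{l:41}, \ref{l:42} and \ref{l:43} in sequence, and then convert the resulting estimate in the $(u,w)$ plane into an $\L^\infty$ estimate using Lemma~\ref{l:31}.

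\textbf{Choice of parameters.} Given $\delta>4\delta_0$ small, I set $\delta_1\doteq\delta/C'$, where $C'$ is the constant of Lemma~\ref{l:43}. Then $\delta=C'\delta_1$, which is the threshold allowed there. Enlarging $C'$ if necessary, I also get $\delta_1\geq 2\delta_0$, so Lemma~\ref{l:41} applies.
\begin{itemize}
\item[(a)] Lemma~\ref{l:41}(i)--(iii) gives, for every $t\geq T_2$, the properties (\ref{uin}), (\ref{uomd}) and (\ref{adec}). Both $T_1$ and $T_2$ in (\ref{T1})--(\ref{T2}) are $\O(1)\cdot(1+b-a)\,\delta_1^{-2}=\O(1)\cdot(1+b-a)\,\delta^{-2}$, with constants depending only on $f,u^\pm,m,M$.
\item[(b)] Lemma~\ref{l:42} with $\delta^*=e^{-T_2}$ gives the tail bounds (\ref{ab2}) on $[a^*(T_2),b^*(T_2)]$. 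For $\delta$ small we have $\delta_0+\delta^*<\delta_1/2$, since $\delta_0<\delta/4$ and $\delta^*$ is exponentially small. After the coordinate shift making $a^*=-b^*$, the profile $u(T_2,\cdot)$ satisfies (\ref{5.1})--(\ref{5.3}).
\end{itemize}

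\textbf{Third phase and conversion.} Lemma~\ref{l:43} then yields (\ref{3.3}) for all $t\geq T_3$, with $T_3$ as in (\ref{5.18}). To apply Lemma~\ref{l:31} at each such $t$, I still need the nondegeneracy condition (\ref{3.15}). Lemma~\ref{l:42} gives it directly, for every $t$: it gives $u(t,x)\geq u^--\delta_0-\delta^*$ for $x\leq a^*(t)$ and $u(t,x)\leq u^++\delta_0+\delta^*$ for $x\geq b^*(t)$. Since $\delta_0+\delta^*$ is much smaller than $(u^--u^+)/2$, this gives $\limsup_{x\to-\infty}u>(u^-+u^+)/2$ and $\liminf_{x\to+\infty}u<(u^-+u^+)/2$. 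Lemma~\ref{l:31} then provides a shift $c(t)$ with $\|u(t,\cdot)-\phi(\cdot-c(t))\|_{\L^\infty}\leq\O(1)\cdot\delta$, which is (\ref{1.11}). The constant $C$ collects the $\O(1)$ of Lemma~\ref{l:31} and $C'$. All constants stay uniform while $u^\pm$ range over a compact set with $u^--u^+$ bounded below, because each lemma has that uniformity.

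\textbf{Main obstacle: the time bound.} It remains to check that $T_3\leq\O(1)\cdot(1+b-a)\,\delta^{-2}$.
\begin{itemize}
\item The term $C|\ln\delta_1|/\delta$ in (\ref{5.18}) is harmless.
\item The term $2b^*/\delta$ is the difficulty. By Lemma~\ref{l:42}, $b^*\approx (b-a)/2+C_2T_2$, which is of order $(1+b-a)\delta^{-2}$. Dividing by $\delta$ would naively give $\delta^{-3}$.
\end{itemize}
I would try first to shrink the effective width entering Lemma~\ref{l:43}.
\begin{itemize}
\item After time $T_1$, re-apply Lemma~\ref{l:42} with $m,M$ replaced by $u^+-\delta_1$ and $u^-+\delta_1$, as allowed by (\ref{uin}).
\item Work in the frame moving with the Rankine--Hugoniot speed $\lambda$. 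The spreading rates $f'(M)-\lambda$ and $\lambda-f'(m)$ are then replaced by characteristic-speed differences at the actual end states, and by the Lax condition the tails of the data are compressive toward the shock.
\item In step~6 of the proof of Lemma~\ref{l:43}, only the distance from the relevant crossing points to the transition region matters, not the full width $2b^*$. The aim is to show that this distance is $\O(1)\cdot(1+b-a)\,\delta^{-1}$.
\end{itemize}
If that refinement fails, the straightforward chaining still proves (\ref{1.11}), but for $t>\O(1)\cdot(1+b-a)\,\delta^{-3}$; this weaker rate would then need to be checked against what the later sections actually use.
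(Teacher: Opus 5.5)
Your skeleton (Lemmas~\ref{l:41}, \ref{l:42}, \ref{l:43}, then Lemma~\ref{l:31}) is the paper's, and verifying (\ref{3.15}) through Lemma~\ref{l:42} fills in a detail the paper leaves implicit. But the proposal does not prove the statement.

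\textbf{Where your argument stops.} With $\delta_1\sim\delta$, the term $2b^*/\delta$ in (\ref{5.18}) only gives $t\geq \O(1)\cdot(1+b-a)\delta^{-3}$. The refinement that should restore $\delta^{-2}$ is stated as an aim and not carried out, and its first concrete step cannot work. The barriers behind Lemma~\ref{l:42} solve linear equations with speeds $f'(M)$ and $f'(m)$. Even after replacing $M,m$ by $u^-+\delta_1$ and $u^+-\delta_1$, the Lax inequalities $f'(u^-+\delta_1)>\lambda>f'(u^+-\delta_1)$ make $b^*(t)-\lambda t$ and $\lambda t-a^*(t)$ grow at rates bounded away from zero. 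So the bound these barriers give on $b^*-a^*$ still grows linearly, and is of order $T_2\sim(1+b-a)\delta^{-2}$ in whatever frame you work. The solution is compressed toward the shock, but these barriers are not.

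There is also a smaller slip. Enlarging $C'$ makes $\delta_1=\delta/C'$ smaller, which works against $\delta_1\geq 2\delta_0$ and $\delta_0+\delta^*<\delta_1/2$. Take $\delta_1=\delta$ instead, let Lemma~\ref{l:43} deliver accuracy $C'\delta$, and absorb $C'$ into $C$.

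\textbf{What the paper does, and what is really missing.} The paper reaches $\delta^{-2}$ by decoupling the two accuracies. It takes $\delta_1=\sqrt\delta$ in Lemmas~\ref{l:41}--\ref{l:42}, so that $T_2$ and $b^*-a^*$ are only $\O(1)\cdot(1+b-a)/\delta$. It then asks for accuracy $\delta$ in Lemma~\ref{l:43}, so that $T_3-T_2=\O(1)\cdot(1+b-a)\delta^{-2}$.

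This choice, however, violates the hypothesis $\delta\geq C'\delta_1$ of Lemma~\ref{l:43}, and that hypothesis is genuine. The upper bound inherited from $\Omega^{\delta_1}$ and the $3\delta_1$ offsets of $\Hat\psi,\Tilde\psi_1,\Tilde\psi_2$ cap the output accuracy at $\O(\delta_1)=\O(\sqrt\delta)$. So your suspicion is justified: the lemmas as stated give $\delta^{-3}$. What is missing, for your argument and for the paper's choice $\delta_1=\sqrt\delta$ alike, is a width bound $b^*-a^*=\O(1)\cdot(1+b-a)/\delta$ at a time $T_2\sim(1+b-a)\delta^{-2}$.

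A route that should give it is to replace the linear barriers by traveling-profile barriers, restarted on dyadic intervals $[t_k,2t_k]$. Give them left state just above $u^-+\delta_0+h(t_k)$, with $h(t)=\O(1)\cdot\sqrt{(b-a)/t}$ as in step~1 of the proof of Lemma~\ref{l:41}, and right state just above $u^++\delta_0$. Their speeds exceed $\lambda$ only by $\O(h(t_k)+\delta_0)$. The front therefore drifts from $x=\lambda t$ by $\O(1)\cdot\bigl(1+b-a+\sqrt{(b-a)T_2}+\delta_0T_2\bigr)$ plus logarithmic repositioning costs, which is $\O(1)\cdot(1+b-a)/\delta$. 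Lower barriers handle $a^*$ symmetrically.

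If you settle for $\delta^{-3}$, Section~\ref{sec:7} appears to survive after retuning exponents. The bound (\ref{neartv}) becomes $\O(1)\cdot\ve^{1/18}$. The window $\ve^{-1/13}$ in (\ref{int0}) and the time $t_\ve=-\ve^{-1/14}$ must then be replaced by smaller powers, e.g.\ $\ve^{-1/20}$ and $-\ve^{-1/21}$.
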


{\bf Proof.} {\bf 1.} 
Given $\delta>0$ we set $\delta_1 = \sqrt \delta$, $\delta_2=\delta$.    Applying Lemmas~\ref{l:41} and \ref{l:42}
we conclude that, at  a suitable time 
\[T_2 ~
\doteq~\max\left\{{1\over c_{1}\sqrt{\delta}}\,,\ {8(M-m)(b-a )+8c_{1}+\bigl(u^{-}-u^{+}+2\sqrt{\delta}\bigr)^2c_{1}\over 2c_{1}\delta }\right\}
~\leq ~ \O(1)\cdot {(b-a+1) \over\delta }
\]
 the solution 
satisfies 
\bel{td1}\left\{\bega{rl}&\big|\bar u(T_2,x)-u^-\big|\leq\delta_0+\delta^*\qquad  \qquad\qquad x\leq a^*\,,
\\[1mm]
&\big|\bar u(T_2,x)-u^+\big|\leq\delta_0+\delta^*\qquad \qquad \qquad x\geq b^*\,,
\\[1mm]
&\bar u(T_2,x)\in \big[u^+-\delta_1\,,~u^-+\delta_1\big]\qquad\qquad x\in\R\,.\enda
\right.
\eeq
where  $\delta^*=e^{-T_2}$ and $[a^*,b^*]$ is an interval such that $b^*-a^*= \O(1)\cdot T_2~=~\O(1) \cdot {1+b-a\over\delta}$.
\v
{\bf 2.} Next, for 
$$t~\geq~T_3 ~=~T_2 + \O(1)\cdot { |\ln \delta_1| + (1+b^*-a^*)\over\delta_2}
~=~ \O(1)\cdot { 1+b-a\over \delta^2}\,,$$
by Lemma~\ref{l:43}  the solution $u(t,\cdot)$ satisfies the inequality (\ref{3.3}).
The conclusion thus follows from Lemma~\ref{l:31}.
\endproof

\begin{remark}\label{r:41}{\rm Lemma~\ref{l:31} does not give information about the shift $c(t)$. 
For this purpose, one can use the inequality
$$\left| u\bigl(t, c(t)\bigr)- {u^-+u^+\over 2}\right|~\leq~C\delta.$$
If we choose
\bel{dsmall}\delta_0 +\delta^* + C\delta~<~{u^--u^+\over 2}\,,\eeq
by Lemma~\ref{l:42} the estimates in (\ref{ab2})-(\ref{apbp}) hold. 
This implies $c(t)\in \bigl[ a^*(t), b^*(t)\bigr]$.  In particular, for $t>\!>1$ we have
\bel{cest}
c(t)~\in~\bigl[ a-\hat\lambda t, ~b+\hat \lambda t\bigr],\eeq
where $\hat \lambda> \max\big\{ - f'(m), f'(M)\bigr\}$ is an upper bound for all characteristic speeds.
}
\end{remark}

\section{Interaction of viscous shocks}
\label{sec:5}
\setcounter{equation}{0}

Aim of this section is to construct a viscous solution $u^\ve$ of (\ref{viscep}) corresponding to the interaction 
of two shocks.
Toward this goal, consider three states
$u^->u^*>u^+$. Let $S_1, S_2$ be two viscous traveling profiles for 
the equation 
\bel{W1} u_t + f(u)_x~=~u_{xx}\,,\eeq
 connecting the states $(u^-, u^*)$ and the states $(u^*, u^+)$, 
respectively (see Fig.~\ref{f:asy4}). These profiles are uniquely determined up to a shift.  
To remove this ambiguity we assume
$$S_1(0)\,=\, {u^-+u^*\over 2}\,,\qquad\qquad S_2(0)\,=\, {u^*+u^+\over 2}\,.$$
The corresponding Rankine-Hugoniot speeds are
$$\lambda_1~=~{f(u^-) - f(u^*)\over u^--u^*}~>~
 {f(u^*) - f(u^+)\over u^*-u^+}~=~\lambda_2\,.$$

We will construct a solution $W=W(t,x)$ of  (\ref{W1}),
defined for all 
$(t,x)\in \R^2$, with the following property. As $\tau\to -\infty$, the profile $W(\tau,\cdot)$ 
approaches the superposition of
two traveling wave profiles, centered at the points
$$\lambda_1\tau~<~\lambda_2 \tau.$$
Moreover, as $\tau\to +\infty$, $W(\tau,\cdot)$  approaches a viscous traveling profile $S$
connecting the states $u^-, u^+$, with Rankine-Hugoniot speed
\bel{RHL}\lambda~=~{f(u^-) - f(u^+)\over u^--u^+}\,.\eeq

\begin{figure}[ht]
\centerline{\hbox{\includegraphics[width=16cm]{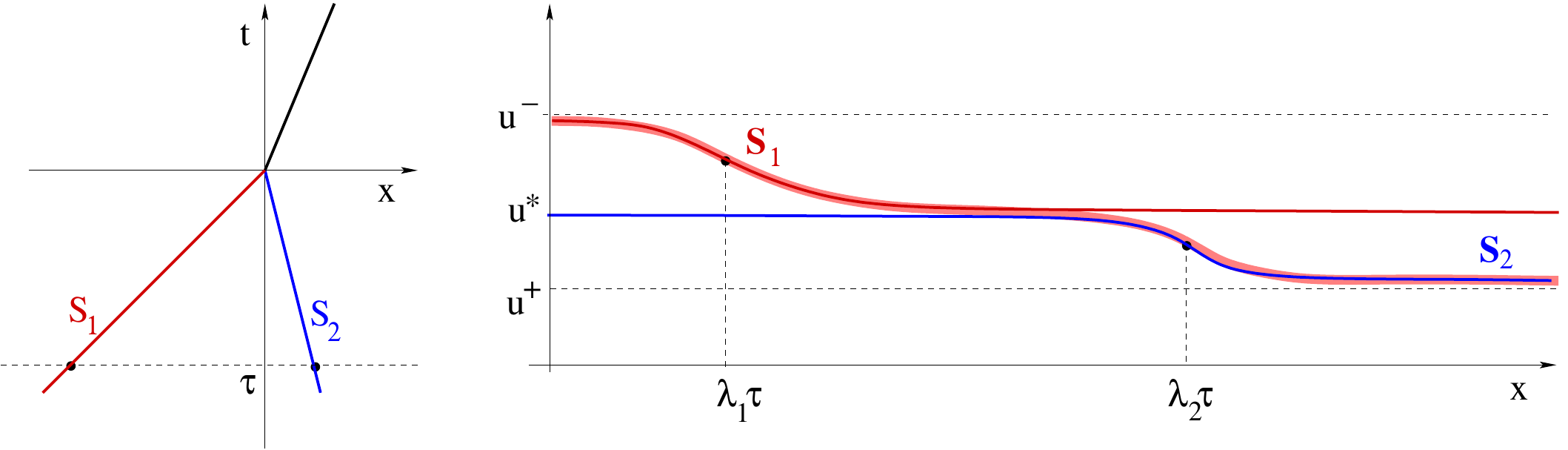}}}
\caption{\small An initial data at $\tau<\!< 0$, obtained by 
interpolating two viscous traveling profiles.}
\label{f:asy4}
\end{figure}

\begin{lemma}\label{l:51}
There exists a unique solution $W=W(t,x)$ of (\ref{W1}), defined for all $(t,x)\in\R^2$,
such that 
\bel{45} \lim_{\tau\to -\infty} \left\{ \int_{-\infty}^{\lambda^* \tau} \Big|
W(\tau,x) - \phi_1(x - \lambda_1\tau)\Big| \, dx + \int^{+\infty}_{\lambda^* \tau} \Big|
W(\tau,x) - \phi_2(x - \lambda_2\tau)\Big| \, dx 
\right\}~=~0\eeq
for any  $\lambda^*$ with $\lambda_2<\lambda^*<\lambda_1$.
\end{lemma}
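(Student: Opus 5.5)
Existence will come from a compactness argument over a sequence of Cauchy problems started further and further in the past, and uniqueness from $\L^1$-contraction. (Here $\phi_1,\phi_2$ denote the profiles $S_1,S_2$.)

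\emph{Approximating problems.} For each $n\in\mathbb{N}$, I set
$$\bar w_n(x)~\doteq~\left\{\bega{ll} \phi_1(x+\lambda_1 n)\qquad & x\leq -\lambda^* n,\\[1mm] \phi_2(x+\lambda_2 n)\qquad & x> -\lambda^* n,\enda\right.$$
with a fixed $\lambda^*\in\,]\lambda_2,\lambda_1[\,$, or a smooth interpolation of the two pieces near $-\lambda^*n$. Let $W_n$ be the solution of (\ref{W1}) on $[-n,+\infty[\,\times\R$ with $W_n(-n,\cdot)=\bar w_n$. Since the profiles converge exponentially to their end states, the interpolation error satisfies
$$\bigl\|\bar w_n-\phi_1(\cdot+\lambda_1 n)-\phi_2(\cdot+\lambda_2 n)+u^*\bigr\|_{\L^1}~\leq~ C e^{-\kappa n},$$
with $\kappa>0$ depending on the distances $(\lambda_1-\lambda^*)$ and $(\lambda^*-\lambda_2)$.

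\emph{Cauchy property.} The key step is to show that $(W_n)$ is Cauchy in $\L^1$ on each time slice. For $m>n$ I compare $W_m(-n,\cdot)$ with $\bar w_n$, using the $\L^1$-contraction of (\ref{W1}). On the interval $[-m,-n]$, the superposition $V=\phi_1(x-\lambda_1 t)+\phi_2(x-\lambda_2 t)-u^*$ is an approximate solution. Its residual is
$$f(V)_x-f'(\phi_1)\phi_1'-f'(\phi_2)\phi_2'.$$
This residual is supported, up to exponentially small tails, where both profiles are away from $u^*$ simultaneously, and its $\L^1$ norm is $\O(1)e^{-\kappa|t|}$ because the two centers are $(\lambda_1-\lambda_2)|t|$ apart. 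Duhamel plus contraction then gives
$$\|W_m(-n)-\bar w_n\|_{\L^1}~\leq~ C e^{-\kappa n} + \int_{-m}^{-n} C e^{\kappa t}\,dt~\leq~ C' e^{-\kappa n}.$$
Propagating forward, $\|W_m(t)-W_n(t)\|_{\L^1}\leq C'e^{-\kappa n}$ for all $t\geq -n$.

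\emph{Passing to the limit.} The sequence therefore converges in $C([T,\infty);\L^1)$ for every $T$. Uniform $\L^\infty$ bounds in $[u^+,u^-]$ and parabolic regularity give convergence in $\C^k_{loc}$, so the limit $W$ is a classical solution defined on all of $\R^2$. Property (\ref{45}) follows from the same estimate: at time $\tau$, $\|W(\tau)-V(\tau)\|_{\L^1}\leq C'e^{-\kappa|\tau|}$, and $V$ agrees with $\phi_1$ (resp.\ $\phi_2$) up to exponentially small $\L^1$ error on $x<\lambda^*\tau$ (resp.\ $x>\lambda^*\tau$). Independence of the choice of $\lambda^*$ follows for the same reason.

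\emph{Uniqueness.} Suppose $W,\tilde W$ both satisfy (\ref{45}). Then $\|W(\tau)-\tilde W(\tau)\|_{\L^1}\to0$ as $\tau\to-\infty$, and the difference is integrable because both equal the same profiles up to $\L^1$-small errors. By $\L^1$ contraction, for every fixed $t$,
$$\|W(t)-\tilde W(t)\|_{\L^1}~\leq~\|W(\tau)-\tilde W(\tau)\|_{\L^1}\qquad\forall \tau<t,$$
and letting $\tau\to-\infty$ yields $W=\tilde W$.

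\emph{Main obstacle.} The delicate point is the exponential residual estimate for the superposition $V$. The interaction term $f(V)-f(\phi_1)-f(\phi_2)+f(u^*)$ must be bounded by
$$\O(1)\,|\phi_1-u^*|\,|\phi_2-u^*|,$$
and integrating this product requires the exponential decay of each profile toward $u^*$, which holds because the shocks are strictly Lax, $f'(u^*)$ lying strictly between the speeds. I would first check this decay via linearization of (\ref{SODE}) at $u^*$.
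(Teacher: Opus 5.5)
Your proposal is correct and follows essentially the paper's argument: solve Cauchy problems from glued two-profile data at times $\tau\to-\infty$, get the Cauchy property from $\L^1$-contraction plus an exponentially decaying error rate, and prove uniqueness by contraction as $\tau\to-\infty$. The differences are minor. You measure the error against the superposition $V=\phi_1+\phi_2-u^*$, whereas the paper measures it against the glued data themselves, whose residual it confines to the unit gluing interval. Since your residual is the $x$-derivative of the interaction term, the bound you actually need is $\O(1)\bigl(|\phi_2-u^*|\,|\phi_1'|+|\phi_1-u^*|\,|\phi_2'|\bigr)$, not the product bound on the interaction term itself. On the other hand, you explicitly verify (\ref{45}) for the limit, for every admissible $\lambda^*$, which the paper leaves implicit.
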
 
%
\v
{\bf Proof.} {\bf 1.} Let $\theta:\R\mapsto [0,1]$ be a smooth, nondecreasing function
such that 
$$\theta(s)~=~\left\{ \bega{rl} 0\quad &\hbox{if}~~x\leq 0,\cr
1\quad &\hbox{if}~~x\geq  1.\enda\right.$$

To construct the solution $W$, 
for a fixed $\tau<\!<0$ we let  $W^{(\tau)}(t,x)  $ be the solution 
to (\ref{W1}) defined for  $t\in [\tau, +\infty[\,$, with initial data at $t=\tau$ given by 
\bel{47}  W^{(\tau)}(\tau, x) ~=~\theta(x-\lambda^* \tau) S_2(x-\lambda^*\tau) + 
\bigl(1-\theta(x-\lambda^* \tau)\bigr)  S_1(x-\lambda^*\tau) .\eeq
\v
{\bf 2.} We claim that, as $\tau\to -\infty$, the solutions $W^{(\tau)}$  defined on $[\tau, +\infty[\,\times \R$
converge to a unique limit solution of (\ref{W1}), defined on the whole plane $\R^2$.
Indeed, consider any two times $\tau'<\tau<\!< 0$.
Since the $\L^1$ distance between any two solutions to (\ref{W1}) 
does not increase in time, we have the error estimate
\bel{48}
\bigl\|  W^{(\tau)}(\tau, \cdot) - W^{(\tau')}(\tau, \cdot) \bigr\|_{\L^1}~
\leq~\int_{\tau'}^\tau E(s)\, ds,\eeq
where the instantaneous error rate is
\bel{49}\bega{rl}E(s)&\ds=~\int_{-\infty}^{+\infty} \left| 
{d\over ds} \Big[W^{(s)} (s, x) \Big]+ f\bigl( W^{(s)}\bigr)_x - W^{(s)}_{xx} 
\right|\, dx\\[4mm]
&\ds =~\int_{\lambda^* s}^{\lambda^* s + 1} \left| 
{d\over ds}\Big[ W^{(s)} (s, x)\Big] + f\bigl( W^{(s)}\bigr)_x - W^{(s)}_{xx} 
\right|\, dx\\[4mm]
&=~\O(1)\cdot e^{ cs},
\enda\eeq
for some constant $c>0$.   Indeed, the  profiles $S_1, S_2$ converge exponentially to the same limit $u^*$ as $x\to +\infty$ and $x\to -\infty$, respectively.

This proves that  any sequence $W^{(\tau_n)}$ is Cauchy, as $\tau_n\to -\infty$.
Hence a solution exists.   
%
\v
{\bf 3.} 
To prove uniqueness, let $W,\Tilde W$ be any two solutions. 
Fix any time $t\in\R$ and let $\tau<t$.  Then
$$\bigl\| W(t)-\Tilde W(t)\bigr\|_{\L^1} ~\leq~\bigl\| W(\tau)-\Tilde W(\tau)\bigr\|_{\L^1} .$$
Letting $\tau\to -\infty$, by (\ref{45}) the right hand side approaches zero.
Hence the left hand side vanishes as well.
\endproof

\section{Viscous approximations on the smooth region}
\label{sec:6}
\setcounter{equation}{0}

As a preliminary,
we recall that, by Kuznetsov's error estimate (see \cite{HR}), the $\L^1$ distance between the 
 solution $u_\ve$ of the viscous equation  (\ref{viscep}) and the original solution $u$ of (\ref{1}), with the same initial data (\ref{2}), is bounded by
 \bel{kerr}
 \bigl\| u_\ve(t,\cdot) - u(t,\cdot)\bigr\|_{\L^1}~\leq~C_*\,\ve^{1/2},\qquad\qquad t\in [0,T],\eeq
for some constant $C_*$ depending only on the flux $f$ and on the total variation of the initial data $\bar u$.
{}From (\ref{kerr}) an elementary argument yields a pointwise error bound on  regions where 
the inviscid solution $u$ is 
Lipschitz continuous.
%

\begin{lemma} \label{l:61}  Let the assumption {\bf (A1)} hold, and consider any initial data $\bar u\in BV$.
Assume that, for some $t>0$, on the interval $[x_1, x_2]$ the solution $u(t,\cdot)$ of (\ref{1})-(\ref{2}) 
is Lipschitz continuous. 
Then,  for  all $\ve>0$ sufficiently small, the solution $u_\ve$ of (\ref{viscep}) 
with the same initial data satisfies
\bel{err2}\bigl| u_\ve(t,x)-u(t,x)\bigr|~\leq~2C_*\ve^{1/6}\quad\forall \quad x\in \left[x_1+\ve^{1/3}, \, x_2-\ve^{1/3}\right].\eeq
\end{lemma}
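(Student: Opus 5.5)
The plan is to turn the $\L^1$ estimate (\ref{kerr}) into a pointwise one, using two ingredients. The first is the Lipschitz bound on $u(t,\cdot)$ over $[x_1,x_2]$. The second is a one-sided Lipschitz bound on $u_\ve(t,\cdot)$, which is Oleinik's inequality (\ref{Ole}) for the viscous equation (\ref{viscep}). By (A1), and since the viscous scaling $t\mapsto t/\ve,\ x\mapsto x/\ve$ leaves the bound invariant, this inequality reads $u_{\ve,x}(t,x)\le 1/(c_1 t)$ uniformly in $\ve$. Fix $t>0$ and let $L$ be the Lipschitz constant of $u(t,\cdot)$ on $[x_1,x_2]$. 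Then $v(x)\doteq u_\ve(t,x)-u(t,x)$ satisfies
$$v(y)-v(x)\,\le\,K\,(y-x)\qquad\hbox{for}\quad x_1\le x<y\le x_2,$$
with $K\doteq 1/(c_1t)+L$ independent of $\ve$.

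Now argue by contradiction. Suppose that at some $x\in[x_1+\ve^{1/3},\,x_2-\ve^{1/3}]$ we have $v(x)>2C_*\ve^{1/6}$. The one-sided bound, read backwards, gives for $y\in[x-\ve^{1/3},x]$
$$v(y)\,\ge\, v(x)-K(x-y)\,\ge\, 2C_*\ve^{1/6}-K\ve^{1/3}.$$
For $\ve$ small we have $K\ve^{1/3}\le C_*\ve^{1/6}$, so $v(y)\ge C_*\ve^{1/6}$ on an interval of length $\ve^{1/3}$, and this interval lies inside $[x_1,x_2]$ because $x\ge x_1+\ve^{1/3}$. Integrating,
$$\|v\|_{\L^1}\,\ge\, C_*\ve^{1/6}\cdot\ve^{1/3}\,=\,C_*\ve^{1/2},$$
with strict inequality since $v$ is continuous and $v(x)$ is strictly larger than the threshold. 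This contradicts (\ref{kerr}).

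The case $v(x)<-2C_*\ve^{1/6}$ is symmetric: use the same inequality forward in space, $v(y)\le v(x)+K(y-x)$ for $y\in[x,x+\ve^{1/3}]$, which is why the margin $x\le x_2-\ve^{1/3}$ is needed. The exponents $1/6$ and $1/3$ are chosen exactly to balance height times width against $\ve^{1/2}$.

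The main technical point is justifying the uniform one-sided bound on $u_\ve$. I would obtain it from the maximum principle applied to $p=u_{\ve,x}$, which satisfies $p_t+f'(u)p_x+f''(u)p^2=\ve p_{xx}$. Since $f''\ge c_1$, the function $1/(c_1t)$ is a supersolution, so $p\le 1/(c_1 t)$. If one prefers to avoid this, the two-sided Lipschitz bound on $u$ alone is not enough, because $u_\ve$ could have a narrow spike; so some such regularity estimate on $u_\ve$ is genuinely needed.
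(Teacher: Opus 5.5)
Your proposal is correct and follows essentially the same argument as the paper. Oleinik's bound $u_{\ve,x}\le 1/(c_1t)$ together with the Lipschitz bound on $u(t,\cdot)$ gives a one-sided Lipschitz bound on $u_\ve-u$, so a large deviation at one point forces an $\L^1$ mass on an interval of length $\ve^{1/3}$ that exceeds Kuznetsov's bound $C_*\ve^{1/2}$. Your direct contradiction at the threshold $2C_*\ve^{1/6}$ replaces the paper's $\min\bigl\{\frac{\delta}{2}\ve^{1/3},\,\frac{\delta^2}{2K}\bigr\}$ bookkeeping, and your maximum-principle sketch for the viscous Oleinik bound fills in a step the paper only asserts.
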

{\bf Proof.}
By the convexity assumption (\ref{fuc}), the gradient $u_{\ve,x}$ satisfies the  bound 
\bel{supgrad} u_{\ve,x}(t,x)~\leq ~{1\over c_1 t}\,.\eeq
Let $L$ be a Lipschitz constant for $u(t,\cdot)$ on the interval $[x_1, x_2]$.
Assume that, at some point $y\in [x_1+\ve^{1/3}, x_2]$ one has
$u_\ve(t,y)-u(t,y)\doteq\delta>0$.
Then, for $x\in [y- \ve^{1/3}, y]$ one has
$$u_\ve(t,x)-u(t,x)~\geq ~\delta - \left({1\over c_1 t} + L\right) (y - x).$$
Integrating, and recalling (\ref{kerr}), we obtain
$$C_* \ve^{1/2} ~\ge~\int_{y-\ve^{1/3}}^y \max\bigl\{ 0, u_\ve(t,x)- u(t,x)\bigr\} \, dx~\geq~\min
\left\{ {\delta\over 2} \,\ve^{1/3},~{\delta^2\over 2} \left({1\over c_1 t} + L\right)^{-1}\right\}.$$
For $\ve>0$ small, this implies $C_* \ve^{1/2}\geq \delta \ve^{1/3}/2$, hence
 the inequality in (\ref{err2}).
The same inequality is obtained if $u_\ve(t,y)-u(t,y)<0$
at some point $y\in \left[x_1, ~x_2-\ve^{1/3}\right]$.
\endproof

The next lemma yields a kind of ``localization principle".
For a solution of the conservation law (\ref{1}), assume that all backward characteristics
from a neighborhood of the point $(\tau,\xi)$ start inside an interval $[a,b]$.  
Then (assuming that shocks are not present), by perturbing the initial data outside
$[a,b]$ the values of the solution near $(\tau,\xi)$ are not affected.
Of course, the same is no longer true for solutions of the viscous equation (\ref{viscep}).
Yet, as the viscosity coefficient $\ve\to 0$, in a neighborhood of $(\tau,\xi)$ these perturbations vanish to high order.  Therefore, they do not affect the convergence 
properties of the local asymptotic rescalings.  Results of this kind are well known in the literature, see for example \cite{SX}.

For convenience, the following lemma refers to solutions obtained after rescaling, which satisfy the equation (\ref{visc1}) with unit viscosity.

\begin{lemma}\label{l:62} 
Let {\bf (A1)} hold and let $U,V$ be two bounded solutions of the viscous conservation law (\ref{visc1}), 
defined for $t\in [0,T]$, whose initial data
satisfy 
\bel{UV0}U(0,x) = V(0,x)\qquad\forall x\geq 0.\eeq
\begi
\item[(i)]
Assume that, for some speed $\lambda\in\R$ and some constants $\kappa>0$ and $r>0$, the following transversality condition holds:
\bel{trasv0}\max \Big\{ f'\bigl(U(t,x)\bigr), ~f'\bigl(V(t,x)\bigr)\Big\}\,\leq\,
\lambda - \kappa\qquad
\forall t\in [0,T], ~~\lambda t\leq x\leq \lambda t + \ve^{-r}.\eeq
Then, for any $q>0$ and $n\geq 1$,  there exists a constant $C$ independent of $\ve$ such that, 
\bel{uvec}
\bigl|U(t,x) -V(t,x)\bigr| \,\leq\,C \ve^q\qquad\qquad\forall t\in [0,T], ~~x\geq \lambda t + \ve^{-r},\eeq
as long as $T\leq \ve^{-n}$.
\item[(ii)]
The same bound (\ref{uvec}) holds if (\ref{trasv0}) is replaced by
\bel{trasv}\max \Big\{ f'\bigl(U(t,x)\bigr), ~f'\bigl(V(t,x)\bigr)\Big\}\,\leq\,\lambda - \ve^\sigma\qquad
\forall t\in [0,T], ~~\lambda t\leq x\leq \lambda t + \ve^{-r},\eeq
for some constant $0<\sigma<r$.
\item[(iii)] The same conclusion (\ref{uvec}) remains valid if the boundedness of $U,V$ is replaced by the assumption
\bel{boUV} \bigl\| U(t,\cdot)\bigr\|_{\C^1([\lambda t + \ve^{-r}, +\infty[)}~\leq~C_0\ve^{-m},
\qquad\qquad \bigl\| V(t,\cdot)\bigr\|_{\C^1([\lambda t + \ve^{-r}, +\infty[)}~\leq~C_0\ve^{-m},
\eeq
for some constant $C_0$ and any $ m\geq 1$.
\endi
\end{lemma}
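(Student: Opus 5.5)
The difference $Z\doteq U-V$ satisfies a linear parabolic equation
$$Z_t + \bigl(a(t,x)\,Z\bigr)_x~=~Z_{xx},\qquad a(t,x)\,\doteq\,\int_0^1 f'\bigl(\theta U+(1-\theta)V\bigr)\,d\theta,$$
with $Z(0,x)=0$ for $x\geq 0$. By (\ref{trasv0}) and convexity, $a\leq \lambda-\kappa$ on the strip $\lambda t\leq x\leq \lambda t+\ve^{-r}$. I would change to the moving coordinate $y=x-\lambda t$, so the drift becomes $b\doteq a-\lambda\leq -\kappa$ on $0\le y\le \ve^{-r}$. Information from the region $y<0$, where the data differ, must then travel to the right against a drift pointing left, and can only do so by diffusion. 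This suppresses it by a factor of order $e^{-\kappa\ve^{-r}}$, which beats any power $\ve^q$.

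To make this quantitative I would use a comparison (barrier) argument. Since $U,V$ are bounded, $|Z|\leq 2K$ everywhere, with $K\doteq\max\{\|U\|_\infty,\|V\|_\infty\}$. It is convenient to work with the equation in nonconservative form for $Z$: $Z_t+a Z_x + a_x Z = Z_{xx}$. Because $a_x$ need not have a sign, I would instead compare $U$ and $V$ separately with barriers built from the original nonlinear equation. The comparison principle for (\ref{visc1}) is available, and $L^1$-contraction also suggests working with $(U-V)_+$. So I would build a supersolution
$$\Psi(t,y)\,=\,2K\exp\bigl(\mu (y - \ve^{-r}/2) ... \bigr)$$
of exponential type $\Psi=2K\,e^{-\mu(y-y_0(t))}$ on the strip, with $\mu$ small. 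The linear operator $\Psi_t + b\Psi_y -\Psi_{yy}$, applied to $e^{-\mu y}$, gives $(-\mu b-\mu^2)\Psi\ge (\mu\kappa-\mu^2)\Psi>0$ for $\mu<\kappa$. The term $a_x Z$ is what forces this barrier to be handled on the nonlinear level. The cleanest route is Kato's inequality for $(U-V)_+$ in conservation form, $\partial_t (U-V)_+ + \partial_x\bigl(\mathrm{sgn}_+\,(f(U)-f(V))\bigr)\le \partial_{xx}(U-V)_+$. I would then test against weights $\varphi(y)=e^{\mu y}$ localized to the strip. The drift bound $a\le\lambda-\kappa$ produces a negative term $-\mu\kappa\int \varphi\,(U-V)_+$ that absorbs the $\mu^2$ diffusion term. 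Boundary fluxes at $y=0$ are bounded by $C(K)$, and those at $y=\ve^{-r}$ are controlled by the weight.

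Carrying this out, I expect a weighted estimate
$$\int_{\ve^{-r}/2}^{\ve^{-r}} |Z(t,y)|\,dy~\leq~ C\,T\,e^{-\mu\ve^{-r}/2}.$$
Bounded derivatives then convert the $L^1$ bound into a pointwise one. Parabolic smoothing gives uniform $\C^1$ bounds for bounded solutions at times $t\ge1$, and for $t<1$ the bound follows directly because $Z(0,\cdot)=0$ for $x\ge0$. Finally, for $y\geq\ve^{-r}$ there is no drift information, but there is nothing to transport either. Beyond the strip the data agree, and any influence must first cross the strip. So I would apply a second exponential barrier with a rightward-moving front of speed at most $\|f'\|_\infty$, starting from the small value already obtained at $y=\ve^{-r}$. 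Since $T\le \ve^{-n}$, all prefactors are polynomial in $\ve^{-1}$, while the gain $e^{-c\ve^{-r}}$ is superpolynomial. This yields (\ref{uvec}) for every $q$.

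For part (ii), the same argument with $\kappa=\ve^\sigma$ and $\mu=\ve^\sigma/2$ gives decay $e^{-c\ve^{\sigma-r}}$, which is still superpolynomial because $\sigma<r$. For part (iii), the $L^\infty$ and $\C^1$ constants become $C_0\ve^{-m}$, which again enters only polynomially. The main obstacle is the second step: handling the term $a_xZ$ and the boundary fluxes rigorously. I would try first the weighted $L^1$ Kato argument, since it avoids needing a sign on $a_x$.
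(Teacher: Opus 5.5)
Your route differs from the paper's, and the underlying idea is legitimate. The paper removes the term $a_xZ$ by passing to the antiderivative $\Phi(t,x)=\int_x^{+\infty}(U-V)(t,y)\,dy$, which solves a drift--diffusion equation with no zero-order term. It then applies the maximum principle on $\{x\ge\lambda t\}$ with the barrier $\phi^+(x-\lambda t)+c\,t$, where $\phi^+$ equals $C_0e^{-\kappa s}$ across the strip, then a quadratic, then a \emph{constant}. Testing Kato's inequality $w_t+(bw)_y\le w_{yy}$ (with $w=|U-V|$, $b=a-\lambda$) against an increasing exponential weight is the dual of this, and it controls $|U-V|$ directly rather than a signed integral.

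However, as you set it up the argument does not close at the right edge of the strip. If $\varphi=e^{\mu y}$ is localized to $[0,\ve^{-r}]$, integration by parts leaves the term $e^{\mu\ve^{-r}}\bigl[(\lambda-a-\mu)w+w_y\bigr]$ evaluated at $y=\ve^{-r}$.
\begin{itemize}
\item This term carries the \emph{largest} value of the weight, so it is not ``controlled by the weight''.
\item Since $\lambda-a\ge\kappa>\mu$, it is an influx: the leftward drift carries $|U-V|$ back into the strip from $y>\ve^{-r}$.
\item Bounding it requires $w,w_y=o(e^{-\mu\ve^{-r}})$ at $y=\ve^{-r}$, which is the conclusion. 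Your second step for $y\ge\ve^{-r}$ starts from exactly that smallness, so the two steps are circular.
\end{itemize}

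The second step also has a defect of its own. A barrier beyond the strip meets the zero-order term again: $a_xZ$ for $Z$, or roughly $f''(V)V_x\Psi$ if you compare $U$ with $V+\Psi$. There is no transversality there, and absorbing that term costs a factor $e^{CT}$. For $T$ up to $\ve^{-n}$ with $n\ge 1>r$ (e.g.\ $r=1/6$ in Section~\ref{sec:7}), this overwhelms the gain $e^{-c\ve^{-r}}$. So your claim that all prefactors are polynomial fails for that step.

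The missing idea is the analogue of the constant tail of $\phi^+$: do not localize and do not split into two steps.
\begin{itemize}
\item Take $\varphi=e^{\mu y}$ for $y\le\ve^{-r}-\mu^{-1}$, continue it by the concave quadratic with $\varphi'(\ve^{-r})=0$, and set $\varphi\equiv\varphi(\ve^{-r})$ for $y\ge\ve^{-r}$. Test on the whole line.
\item Then $\frac{d}{dt}\int\varphi w\le\int(b\varphi'+\varphi'')w$.
\item The integrand is $\le 0$ on $[0,\ve^{-r}]$. On the exponential piece this uses $\mu\le\kappa$; on the quadratic piece it holds because $\varphi'\ge0$, $b<0$ and $\varphi''<0$.
\item The integrand vanishes for $y>\ve^{-r}$, and contributes at most $2K(\|b\|_\infty+\mu)$ from $y<0$.
\end{itemize}
Since $\int\varphi w(0)\le 2K/\mu$, you get
\[
\int_{\ve^{-r}-1/\mu}^{+\infty}|U-V|(t,y)\,dy~\le~e^{1-\mu\ve^{-r}}\Bigl(\frac{2K}{\mu}+2K\bigl(\|b\|_\infty+\mu\bigr)\,t\Bigr)
\]
in a single step. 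The region beyond the strip, and with it the term $a_xZ$, never has to be treated separately. With $\mu=\ve^{\sigma}$ the transition has length $\ve^{-\sigma}<\ve^{-r}$, so part (ii) follows the same way.
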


\v
\begin{remark} {\rm Referring to Fig.~\ref{f:asy53}, right, the key assumption (\ref{trasv}) requires that, as $\ve\to 0$,
$$\hbox{[width of the boundary layer] $\times$ [difference in speed] }~\geq ~\ve^{-r}\cdot \ve^\sigma~\to~+\infty$$
}
\end{remark}
 {\bf Proof of the lemma.} 
 
{\bf 1.} 
The difference $W=U-V$ satisfies the viscous conservation law 
\bel{weq1}
W_t + \bigl[ A(U, V) W\bigr]_x~=~W_{xx}\,,\eeq
with
\bel{Auv}  A(U,V)\,\doteq\,\int_0^1 f'(\theta U + (1-\theta) V\bigr)\, d\theta~\leq~\lambda-\kappa
\qquad\qquad \hbox{for}~~x\in [\lambda t,~ \lambda t+ \ve^{-r}].\eeq
Moreover, the initial data satisfies
\bel{W0}W(0,x)~=~U(0,x)-V(0,x) ~=~0\qquad \hbox{for}~~x\geq 0.\eeq
\v
\begin{figure}[ht]
\centerline{\hbox{\includegraphics[width=8cm]{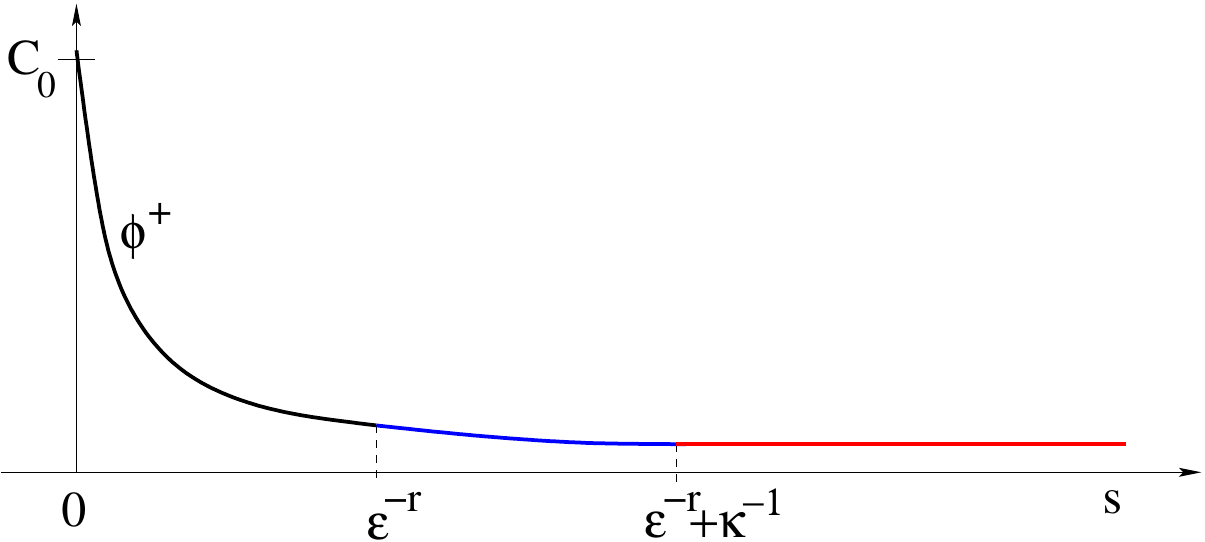}}
\qquad \hbox{\includegraphics[width=6cm]{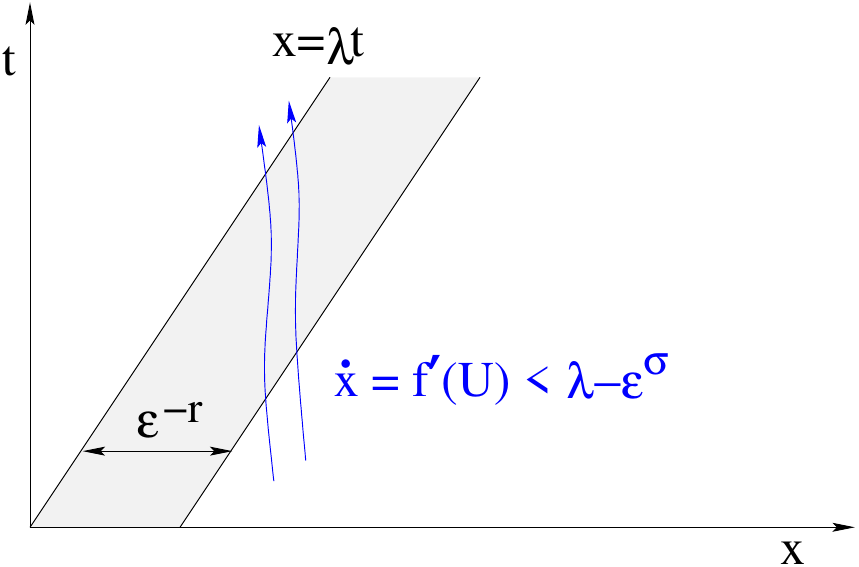}}}
\caption{\small Left: the function $\phi^+(s)$ defined as (\ref{phi+}). Right: the assumption (\ref{trasv})  requires that characteristic curves cross the shaded domain transversally. }
\label{f:asy53}
\end{figure}

{\bf 2.}
We shall estimate $W$ on the region where $x>\lambda t$ by establishing a bound on the integrated function
$$\Phi(t,x)~\doteq~\int_{x}^{+\infty} W(t,y)\, dy.$$
By (\ref{weq1}), this function satisfies
\bel{we2}
\Phi_t - A(U,V) \Phi_x~=~\Phi_{xx}\,,\eeq
with initial and boundary conditions
\bel{Pib}\Phi(0,x)\,=\,0\quad\hbox{for }~~x\geq 0,\qquad\quad \Phi_x(t,  \lambda t)~<~
C_0~\doteq~1+\|U\|_{\L^\infty} + \|V\|_{\L^\infty}\,.\eeq
An upper solution to (\ref{we2})-(\ref{Pib}) will be constructed by piecing together an exponential and a quadratic function, as shown in Fig.~\ref{f:asy53}.
We first define
\bel{phi+}\phi^+(s)~\doteq~\left\{ \bega{cl} C_0e^{-\kappa s} \quad &\hbox{if}\quad s\in [0, \ve^{-r}],\\[2mm]
P(s)\quad &\hbox{if}\quad s\in [\ve^{-r}, \ve^{-r}+\kappa^{-1}],\\[2mm]
P(\ve^{-r}+\kappa^{-1})\quad &\hbox{if}\quad s > \ve^{-r}+\kappa^{-1}.\enda\right.\eeq
Here $P(s)$ is the second order Taylor polynomial of the function $C_0e^{-\kappa s}$ computed at the point
$s_\ve=\ve^{-r}$.   This quadratic function attains its global minimum at $s= \ve^{-r}+\kappa^{-1}$.
We then construct an upper solution by setting
\bel{Phidef}\Phi^+(t,x)~=~\phi^+(x-\lambda t) + C_0\kappa^2 \exp\{ -\kappa \ve^{-r}\} t.\eeq
A lower solution can be constructed in the same way.

\begin{figure}[ht]
\centerline{\hbox{\includegraphics[width=10cm]{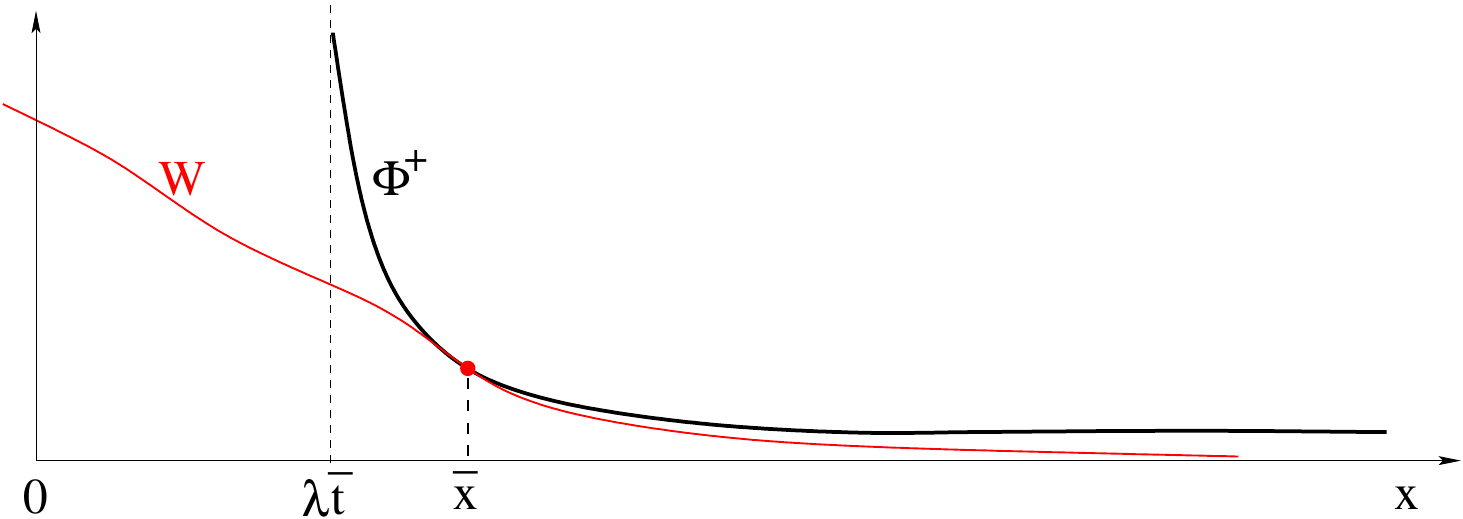}}}
\caption{\small The comparison argument used in the proof of (\ref{Intineq}). }
\label{f:asy54}
\end{figure}
\v
{\bf 3.} 
By a comparison argument, we claim that 
\bel{Intineq}
-\Phi^+(t,x)~\leq~\int_x^{+\infty} \bigl[U(t,y) - V(t,y)\bigr]\, dy~\leq~\Phi^+(t,x)\qquad 
\forall x\geq \lambda t, ~~t\in [0, T].\eeq
Indeed,  
consider a first time $\bar t$ such that, for some $\bar x\geq \lambda \bar t$, one has
$$W(\bar t, \bar x)= \Phi^+(\bar t,\bar x).$$
As shown in Fig.~\ref{f:asy54}, the boundary condition (\ref{Pib}) rules out the possibility that $\bar x= \lambda \bar t$.
Hence $\bar x>\lambda\bar t$ and at the point $(\bar t, \bar x)$ the functions $W$ and $\Phi^+$ satisfy
$$W_x = \Phi^+_x,\qquad W_{xx} \leq~\Phi^+_{xx}\,.$$
This implies $W_t<\Phi^+_t$, reaching a contradiction. 
In a similar way, a contradiction is also reached  if $W(\bar t, \bar x)= -\Phi^+(\bar t,\bar x)$.
\v
{\bf 4.} Assume that $T\leq \ve^{-n}$ for some $n\geq 1$.  On the region 
$$\Gamma\,\doteq\,\Big\{ (t,x)\,;~~x\geq \lambda t + \ve^{-r},\quad t\in [0, T]\Big\},$$
the previous analysis yields
\bel{ineq3} \bega{l}\ds  \left|\int_x^{+\infty} \bigl[U(t,y) - V(t,y)\bigr]\, dy\right|~\leq~C_0\exp\{ - \kappa \ve^{-r}\} +C_0
\kappa^2 \exp\{ -\kappa \ve^{-r}\} \ve^{-n}~
\leq~C' \ve^{2q},\enda
\eeq
for some constant $C'$ and all $\ve>0$ small enough.  
Notice that this constant $C'$ depends on $\kappa, r, n$, but not on $\ve$.
Since $U, V$ are uniformly Lipschitz continuous, from the integral bound (\ref{ineq3}) we recover a pointwise bound on $U-V$, namely
$$\bigl|U(t,x)- V(t,x)\bigr| ~\leq~C \ve^q \qquad \forall  (t,x)\in \Gamma.$$
This yields (\ref{uvec}). 
\v
{\bf 5.} To prove (ii),  assume that (\ref{trasv}) holds. Replacing $\kappa$ with $\ve^\sigma$ in (\ref{ineq3}) we  obtain 
\bel{ineq33} \bega{l}\ds  \left|\int_x^{+\infty} \bigl[U(t,y) - V(t,y)\bigr]\, dy\right|~\leq~C_0\exp\{ - \ve^\sigma \ve^{-r}\} +
\ve^{2\sigma} \exp\{ -\ve^\sigma \ve^{-r}\} \ve^{-n}~
\leq~C' \ve^{2q},\enda
\eeq
for some constant $C'$ and all $\ve>0$ small enough. Hence the same conclusion holds, provided that $0<\sigma<r$.
\v
{\bf 6.} Finally, to prove (iii) it suffices to replace $C_0$ with $\ve^{-m} C_0$ in the previous computations.
The inequality (\ref{ineq33}) can now be replaced by
\bel{ineq34} \bega{l}\ds  \left|\int_x^{+\infty} \bigl[U(t,y) - V(t,y)\bigr]\, dy\right|~\leq~C_0\ve^{-m}\exp\{ - \ve^\sigma \ve^{-r}\} +
\ve^{2\sigma} \exp\{ -\ve^\sigma \ve^{-r}\} \ve^{-n}~
\leq~C' \ve^{2q+m},\enda
\eeq
for all $\ve>0$ small enough. Since by assumption both $U$ and $V$ are Lipschitz continuous
with constant $C_0 \ve^{-m}$, this implies (\ref{uvec}), for a suitable constant $C$.
\endproof

\begin{remark}\label{r:62} {\rm By the same arguments, assuming that 
$U(0,x)=V(0,x)$ for $x\in [a,b]$, one can obtain an estimate of the form
\bel{uvec2}\bigl|U(t,x) -V(t,x)\bigr| \,\leq\,C \ve^q\quad\qquad\forall t\in [0,T], ~~x\notin \bigl[ a+\lambda t + \ve^{-r},~b-\lambda l - \ve^{-r}\bigr].\eeq
Indeed, one can introduce the intermediate function
$$\Tilde U(0,x)~=~\left\{ \bega{rl} U(0,x)\quad &\hbox{if}~~x<b,\\[1mm]
V(0,x)\quad &\hbox{if}~~x\geq b,\enda\right.$$
and compare the corresponding solutions $U(t,x)$ with $\Tilde U(t,x)$, 
then $\Tilde U(t,x)$ with $V(t,x)$.
}
\end{remark}

\section{Proof of Theorem~\ref{t:1}}
\label{sec:7}
\setcounter{equation}{0}
We observe that all the rescaled solutions $U^\ve$ in (\ref{resc11}) are globally bounded and satisfy the parabolic equation (\ref{visc1}) with unit viscosity.
Hence their Lipschitz constant is also uniformly bounded. The 
existence of a subsequence that converges uniformly on compact sets of $\R^2$ 
is thus trivial.   The main issue is to  identify the nontrivial limit which is obtained after a suitable shift.

For any $\ve>0$, let us denote by $u^{(\ve)}(t,x)$ the solution to the conservation law (\ref{1}) without
viscosity, and with initial data
\bel{epind}u^{(\ve)}(0,x)~=~\bar u_\ve(x).\eeq
As $\ve\to 0$, the convergence (\ref{uelim1})
 implies that
\begi
\item In case (i) the solution $u^{(\ve)}$ contains a shock at a point $(\tau, x^\ve)$, with 
$x^\ve\to \xi$ and left and right states $u_\ve^\pm\to u^\pm$.
\item In case (ii), the solution  $u^{(\ve)}$ contains two shocks which merge together at a point
$(t_\ve, x_\ve)\to (\tau,\xi)$, with left, middle and right states $u^-_\ve\to u^-$, $u^*_\ve\to u^*$, 
$u^+_\ve\to u^+$.
\endi
Based on the above remarks,  we shall first work out a  proof in the case where
the initial conditions satisfy
\bel{inv}u_\ve(0,x)~=~\bar u(x)\eeq
for all $\ve>0$.
The general case, where the initial data also depend on $\ve$, will then follow.
%
\v

 In both cases (i)--(ii), we construct
  a family of rescaled solutions $(U^\ve)_{\ve>0}$ to the equation (\ref{21}) with unit viscosity.  We want to show that, as 
  $\ve \to 0$,
  they converge to a specific solution $V$ of the same equation, uniformly for $(t,x)$ in compact subsets. 

The proof relies on two main ingredients.
\begi
\item As $\ve\to 0$, one has the convergence $\bigl\|U^\ve(t_\ve, \cdot)- V(t_\ve,\cdot)\bigr\|_{\L^1([-r_\ve, r_\ve])}\to 0$, 
for a suitable choice of $t_\ve\to -\infty$, $r_\ve \to +\infty$.
\item For the rescaled solution to the inviscid equation, the domains of determinacy $\D_\ve$ of the intervals $J_\ve= \bigl\{ (t_\ve, x)\,;~~|x|\leq r_\ve\bigr\}$ 
cover the entire plane $\R^2$.
\endi

\v

\begin{figure}[ht]
\centerline{\hbox{\includegraphics[width=9cm]{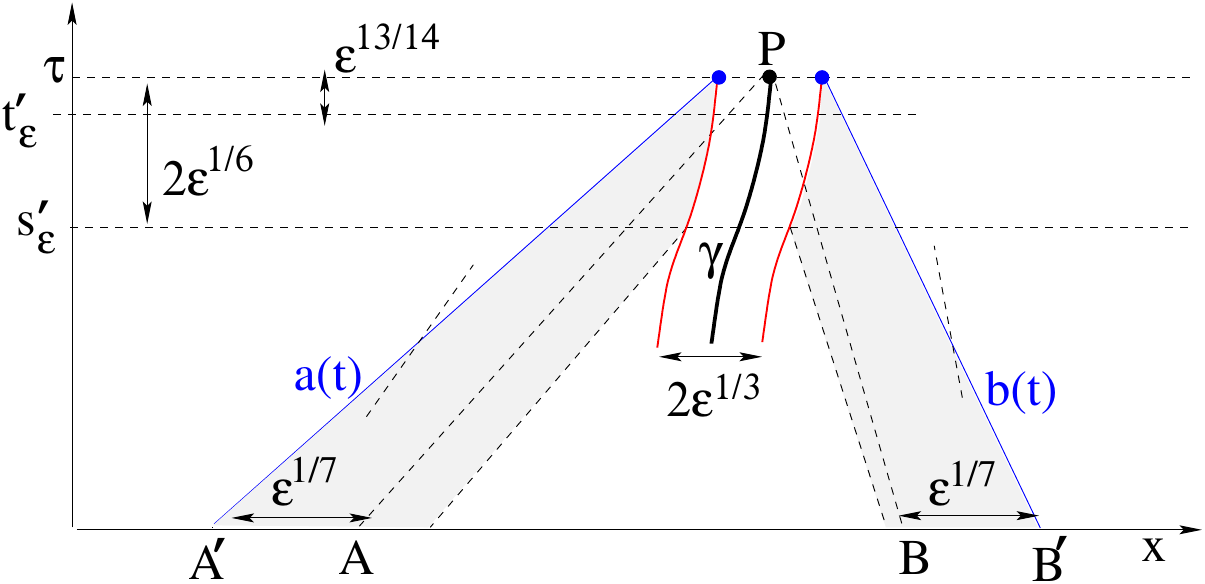}}\qquad \hbox{\includegraphics[width=6cm]{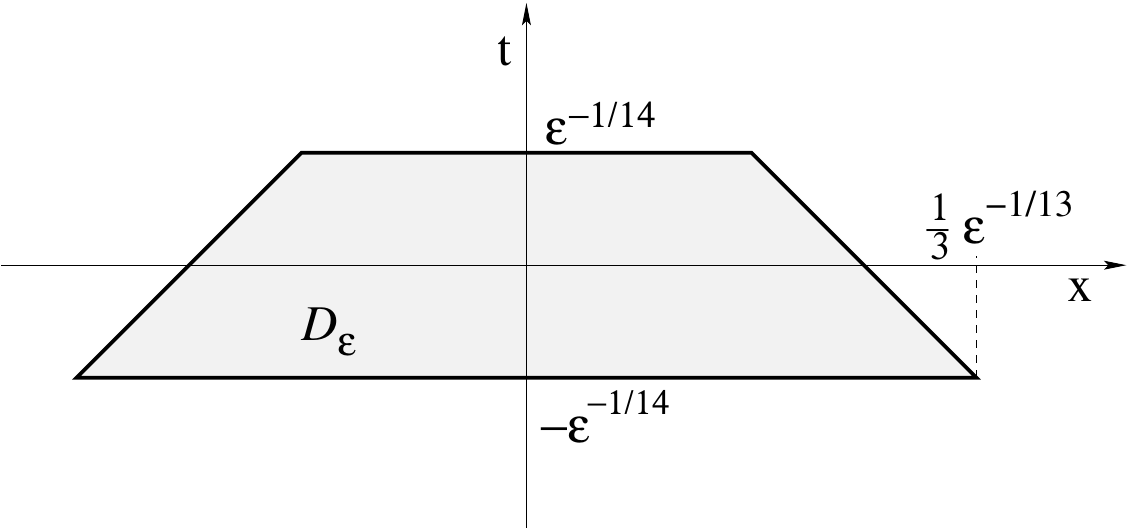}}}
\caption{\small Left: the construction used in the proof of Theorem~\ref{t:1}.  For $|x-\gamma(t)|>\ve^{1/3}$, by 
(\ref{err1}) the viscous approximation $u_\ve$ is close to the inviscid solution $u$. 
  Right: the domain $\D_\ve$  in (\ref{Dep1}).}
\label{f:asy56}
\end{figure}

\subsection{A point along a single shock.}\label{s:71}

{\bf 1.}
Assume that the inviscid solution has a shock  along the curve $x=\gamma(t)$, passing 
through the point $P=(\tau,\xi)$. 
Call 
$u^\pm\doteq u(\tau, \xi\pm)$ the left and right states at time $t=\tau$. W.l.o.g.~we shall assume $\xi=\gamma(\tau)=0$.

By the $\L^1$ error bound (\ref{err2}), on a neighborhood of the point $(\tau,\xi)$ one has the implication
\bel{err1}
\bigl|x-\gamma(t)\bigr|\geq\ve^{1/3}\qquad \implies \qquad \bigl| u_\ve(t,x)-u(t,x)\bigr| \,<\,2C_*\,\ve^{1/6},\eeq
valid for all $\ve>0$ sufficiently small.
As shown in Fig.~\ref{f:asy56}, left, define
$$C\,=\,-3\ve^{1/3},\qquad D\,=\, 3\ve^{1/3},$$
and consider the initial points of the backward characteristics through $P=(\tau,0)$, namely
$$A~=~- \tau\, f'(u^-) ,\qquad B~=~-\tau\,f'(u^+).$$
Consider the additional points
$$A'~=~- \tau\, f'(u^-)- \ve^{1/7} ,\qquad B'~=~
- \tau\,f'(u^+)+\ve^{1/7}.$$
We claim that 
the lines $$x= a(t)\quad\hbox{through ~$(0,A')$,~$(\tau,C)$ \qquad and}\qquad 
 x= b(t)\quad\hbox{through ~$(0,B')$,~$(\tau,D)$} $$
  have slopes $\lambda_a>\lambda_b$ which satisfy the transversality relations
\bel{trans1}\bega{rl}
\lambda_a~>~f'\bigl( u(t,x)\bigr) +2\kappa_1\ve^{1/7}\qquad &\hbox{if}~~ x\in \bigl[a(t)-2\ve^{1/3}, \, a(t)\bigr],\\[2mm]
\lambda_b~<~f'\bigl( u(t,x)\bigr) -2\kappa_1\ve^{1/7}\qquad &\hbox{if}~~ x\in \bigl[b(t), \, b(t)+2\ve^{1/3}\bigr],\enda\eeq
for some $\kappa_1>0$, independent of $\ve$.  Indeed, this follows from the assumption
that the solution $u(\tau,\cdot)$ is $\C^1$ to the left and to the right of the shock.
In particular, calling 
$$x(t,y) ~\doteq ~y+t f'\bigl(\bar u(y)\bigr)$$
 the position of the characteristic starting at $y$, at time $\tau$  we have
\bel{nondeg}{\partial \over\partial y} x(\tau, y)~\geq ~c_0~>~0,\eeq
for all characteristics reaching a  neighborhood of $P$.   Indeed, such a quantity vanishes only at the blow-up times (\ref{r5}).
Based on (\ref{nondeg}), an elementary computation 
leads to (\ref{trans1}).

By (\ref{err1}), for the corresponding  viscous solution $u_\ve$ with $\ve>0$ small enough, we thus have
\bel{trans2}\bega{rl}
\lambda_a~>~f'\bigl( u_\ve(t,x)\bigr) +\kappa_1\ve^{1/7}\qquad &\hbox{if}~~ x\in \bigl[a(t)-2\ve^{1/3}, \, a(t)\bigr],\\[2mm]
\lambda_b~<~f'\bigl( u_\ve(t,x)\bigr) -\kappa_1\ve^{1/7}\qquad &\hbox{if}~~ x\in \bigl[b(t), \, b(t)+2\ve^{1/3}\bigr].\enda\eeq

\v
{\bf 2.} We now construct a second initial data $\bar v$ such that 
\bel{barv1}\bar v(x)~=~\left\{ \bega{cl} \bar u(x)\quad &\hbox{if} \quad x\in \bigl[A', B'\bigr],\\[2mm]
\bar u(A')\quad &\hbox{if} \quad x\leq  A',\\[2mm] \bar u(B')\quad &\hbox{if} \quad x\geq  B'.
\enda\right.
\eeq
Let $v_\ve$ be the solution to (\ref{viscep}) with initial data $\bar v$.  
Consider the rescaled functions
\bel{uvres1}
U^\ve(t,x)\,\doteq\,u_\ve\bigl(\tau+\ve t, \,\ve x\bigr),\qquad\qquad V^\ve(t,x)\,=\, v_\ve\bigl(\tau+\ve t,\, \ve x\bigr),\eeq
and let 
$$\Gamma_\ve(t)\,\doteq\,\ve^{-1} \gamma(\tau+\ve t),\qquad\qquad
\bigl[a_\ve(t), \, b_\ve(t)\bigr] \,\doteq\,\bigl[ \ve^{-1} a(\tau+\ve t), ~\ve^{-1} b(\tau+\ve t)\bigr],$$
be  the shock curve,  and the interval corresponding to $[a, b]$ in the rescaled coordinates.
Referring to Fig.~\ref{f:asy56}, consider the intervals
\bel{step}[s'_\ve, t'_\ve]\,\doteq\,\bigl[ \tau - 2\ve^{1/6},\tau-\ve^{13/14}\bigr],
\qquad [s_\ve, t_\ve]\,\doteq\,\bigl[\ve^{-1} (s'_\ve-\tau), \,\ve^{-1}(t'_\ve-\tau)\bigr]~=~\bigl[ - 2\ve^{-5/6},
-\ve^{-1/14}\bigr].
\eeq
By Lemma~\ref{l:61} applied to the solution $v$ of (\ref{1}) with initial data $\bar v$, and by piecewise Lipschitz continuity, at time $t=s_\ve= -2 \ve^{-5/6}$ we have
\bel{Vep1}\bega{rl} 
\bigl|V^\ve(t,x) - u^-\bigr|~=~\O(1)\cdot \ve^{1/7}\qquad \hbox{for}~~ x< \Gamma_\ve(t) - \ve^{-2/3},\\[2mm]
\bigl|V^\ve(t,x) - u^+\bigr|~=~\O(1)\cdot \ve^{1/7}\qquad \hbox{for} ~~x> \Gamma_\ve(t) + \ve^{-2/3}.\enda\eeq
We can thus apply Theorem~\ref{t:41}, where:
\begi
\item  The time interval $[0,T]$ is replaced by 
$  [s_\ve, t_\ve]$ as in (\ref{step}).  Notice that  $t_\ve - s_\ve>\ve^{-5/6}$.
\item The interval 
$[a,b]$ is replaced by $[a_\ve, b_\ve],\doteq\,\bigl[ \Gamma_\ve(s_\ve) - \ve^{-2/3}, ~ \Gamma_\ve(s_\ve) + \ve^{-2/3}\bigr]$.
\item At time $t=s_\ve$, for $x\notin [a_\ve,b_\ve]$, we have 
\bel{Vepm}\bigl|V^\ve(s_\ve,x) - u^\pm\bigr| \leq \delta_0 ~=~ \O(1)\cdot \ve^{1/7}.\eeq
 \endi
At times  $t>\!>  s_\ve$,  by Theorem~\ref{t:41} the solution $V^\ve(t,\cdot) $ is well approximated by a traveling wave profile $\phi$ with left and right asymptotic states $u^-, u^+$. 
Indeed, for a suitable shift $c_\ve$,
\bel{neartv}\sup_x
\bigl|V^\ve(t,x)-\phi(x-\lambda t- c_\ve)\bigr|\,=\,\O(1)\cdot  \left( 1+ b_\ve-a_\ve \over t_\ve-s_\ve\right)^{1/2}\!=~
\O(1)\cdot \left({\ve^{-2/3} \over \ve^{-5/6}}\right)^{1/2}\!=\,\O(1)\cdot \ve^{1/12}.\eeq
We observe that, by Remark~\ref{r:41}, the shift $c_\ve$ can be bounded by
\bel{ceb}
c_\ve~=~\O(1)\cdot \bigl( (b_\ve-a_\ve) + (t_\ve-s_\ve)\bigr)~=~\O(1) (\ve^{-2/3} + \ve^{-5/6})~=~\O(1)\cdot \ve^{-5/6}.\eeq
Hence, in the original coordinates, this corresponds to choosing
\bel{sh2}\xi_\ve~=~\O(1)\cdot (\ve^{1/3} + \ve^{1/6})~=~\O(1)\cdot\ve^{1/6},\eeq
which approaches zero as $\ve\to 0$.
\v

In particular, at time $t=t_\ve= -\ve^{-1/14}$, integrating over the intervals where $|x- c_\ve|<\ve^{-1/13}$, as $\ve\to 0$ 
we have the convergence
\bel{int0}\int_{|x|<\ve^{-1/13}} \Big|V^\ve(t_\ve,x)-\phi(x-\lambda t_\ve - c_\ve)\Big|\, dx~=~\O(1)\cdot \ve^{{1\over 12}- {1\over 13}}~\to ~0.\eeq
\v
{\bf 3.} Thanks to the transversality condition (\ref{trans1}), we can now apply 
 Lemma~\ref{l:62} with $\sigma= {1\over 7}$ and $r= {1\over 6}$, $q=1$.
 First with $\lambda=\lambda_a$, then with $\lambda=\lambda_b$.
 This yields the bound
 \bel{VUed}\bigl|V^\ve(t,x)-U^\ve(t,x)\bigr|~=~\O(1)\cdot \ve ~<~\ve^{1/2}\qquad\qquad\forall x\in \bigl[a_\ve(t), b_\ve(t)\bigr].\eeq
In view of (\ref{int0}), we obtain
 \bel{int10}\int_{|x|<\ve^{-1/13}} \Big|U^\ve(t_\ve,x)-\phi(x-\lambda t_\ve - c_\ve)\Big|\, dx~=~\O(1)\cdot \ve^{{1\over 156}}~\to ~0.\eeq
\v
{\bf 4.} Finally, we study the solution $U^\ve$ for $t>t_\ve$.
Introduce the upper bound on all characteristic speeds:
\bel{hatl} \hat\lambda~\doteq~1+ \sup\Big\{ \bigl|f'(\omega)\bigr|\,;~~|\omega|\leq \|\bar u\|_{\L^\infty}\Big\}, 
\eeq
and, motivated by (\ref{int0}), define the domain (see Fig.~\ref{f:asy56}, right)
\bel{Dep1}
\D_\ve~\doteq~\Big\{ (t,x)\,;~~|t|\leq  \ve^{-1/14},~~~|x|\leq {1\over 3} \ve^{-1/13}- (t+\ve^{-1/14})\hat\lambda\Big\}.
\eeq
Let $\Hat U^\ve$ be a second solution to (\ref{visc1}) with initial data  at $t=t_\ve = -\ve^{-1/14}$ 
obtained by modifying $U^\ve$,
as follows.
\bel{HEep}
\Hat U^\ve(t_\ve,x)~=~\left\{ \bega{cl} U^\ve(t_\ve,x)\quad &\hbox{if} \quad |x|  \leq {2\over 3}\ve^{-1/13},\\[3mm]
\phi(x-\lambda t_\ve-c_\ve)\quad &\hbox{if} \quad |x|  \geq \ve^{-1/13}.
\enda\right.
\eeq
Recalling (\ref{int10}), we can define $\Hat U^\ve(t_\ve,x) $ in the intermediate region where ${2\over 3}\ve^{-1/13}<|x|< 
\ve^{1/13}$ in such a way that 
\bel{int12} \int_{-\infty}^{+\infty} \Big|\Hat U^\ve(t_\ve,x)-\phi(x-\lambda t_\ve-c_\ve) \Big|\, dx~=~\O(1)\cdot \ve^{{1\over 156}}.\eeq
Since the semigroup generated by the viscous conservation law (\ref{visc1}) is contractive w.r.t.~the $\L^1$ distance, for every $t>t_\ve$ we have
\bel{int3} \int_{-\infty}^{+\infty}  \Big|\Hat U^\ve(t,x)-\phi(x-\lambda t - c_\ve)\Big|\, dx~\le ~ \int_{-\infty}^{+\infty}  \Big|\Hat U^\ve(t_\ve,x)-\phi(x-\lambda t_\ve - c_\ve)\Big|\, dx~=~\O(1)\cdot \ve^{{1\over 156}}.\eeq
Next, to estimate the difference $\Hat U^\ve-U^\ve$ on the domain $\D_\ve$, we apply Lemma~\ref{l:62}
with $\kappa=1$, $\lambda= \pm \hat \lambda$, $r= 1/14$, $q=1$.   By (\ref{uvec}) it follows
\bel{UHUerr}\Big|\Hat U^\ve(t,x)- U^\ve(t,x)\bigr|~=~\O(1)\cdot\ve\qquad\qquad\forall (t,x)\in \D_\ve\,.\eeq
Together with (\ref{int3}), for every $t\in [-\ve^{-1/14}, \ve^{-1/14}]$, this implies
\bel{int34} \int_{|x|\leq {1\over 3} \ve^{-1/13}- (t+\ve^{-1/14})\hat\lambda}
 \Big|U^\ve(t,x)-\phi(x-\lambda t - c_\ve)\Big|\, dx~=~\O(1)\cdot \ve^{{1\over 156}}~\to~0\eeq
as $\ve\to 0$.

Since the viscous solutions $U^\ve$ are uniformly Lipschitz continuous and the domains $\D_\ve$ invade the entire plane $\R^2$, arguing as in the proof of Lemma~\ref{l:61}, 
by (\ref{int34}) we conclude the convergence of the rescaled functions $U^\ve$ to the traveling wave profile $\phi(\cdot)$, uniformly on bounded sets.

\subsection{A point where two shocks merge.}\label{s:72} We now consider the case where the inviscid solution 
$U$ has two shocks, say located at 
$\gamma_1(t)<\gamma_2(t)$,
interacting at time $t=\tau$ at the point $x=\xi$.    W.l.o.g.~we can again assume $\xi=0$.
Let $u^->u^*>u^+$ be the left, middle, and right state at the point of interaction.
As long as the two discontinuities remain sufficiently far apart, the same analysis as in the case of a single shock
remains valid.  More precisely, let $\phi_1$ be a viscous traveling profile with speed $\lambda_1$ connecting the asymptotic states $u^-, u^*$, and let $\phi_2$ be a traveling profile with speed $\lambda_2$ connecting  the states $u^*, u^+$.  
\v
{\bf 1.}
We perform the same analysis as in Section~\ref{s:71}, separately for the two shocks $\gamma_1$, $\gamma_2$.
More precisely, consider the initial points of the three backward characteristics through $P=(\tau,0)$:
\bel{ABC} A=-\tau f'(u^-),\qquad B= -\tau f'(u^*),\qquad C= - \tau f'(u^+).\eeq
Then define
\bel{ABC'} \bega{l}A'=-\tau f'(u^-)-\ve^{1/7},\qquad B'= -\tau f'(u^*)+\ve^{1/7},\\[2mm]B''= -\tau f'(u^*)-\ve^{1/7},
\qquad 
C''= - \tau f'(u^+)+ \ve^{1/7}.\enda\eeq

As in (\ref{barv1}),
by replacing the initial data $\bar u$ with  functions $\bar v_1$ or $\bar v_2$  such that
\bel{barv11}\bar v_1(x)~=~\left\{ \bega{cl} \bar u(x)\quad &\hbox{if} \quad x\in \bigl[A, B],\\[2mm]
\bar u(A')\quad &\hbox{if} \quad x\leq  A',\\[2mm] \bar u(B')\quad &\hbox{if} \quad x\geq  B',
\enda\right.\qquad\qquad 
\bar v_2(x)~=~\left\{ \bega{cl} \bar u(x)\quad &\hbox{if} \quad x\in \bigl[B'', C''\bigr],\\[2mm]
\bar u(B'')\quad &\hbox{if} \quad x\leq  B'',\\[2mm] \bar u(C'')\quad &\hbox{if} \quad x\geq C'',
\enda\right.
\eeq
 we obtain 
two viscous rescaled solutions $V^\ve_1$, $V^\ve_2$ that satisfy the same estimates as in (\ref{neartv}), say
\bel{neartw2}\sup_{-\infty<x<\infty}
\Big|V_i^\ve(t,x)-\phi_i\bigl(x-\lambda_i t- c_{i,\ve}(t)\bigr)\Big|~=~\O(1)\cdot \ve^{1/12},\qquad\qquad i=1,2.\eeq
Notice that here the shift $c_{i,\ve}$ may depend on time.
The bound (\ref{neartw2}) is uniformly valid for $t\in [\ve^{-5/6},0]$.

Denote by $\Gamma_{1\ve},\Gamma_{2\ve}$ the locations of the shocks in rescaled coordinates:
$$\Gamma_{i\ve}(t)~=~\ve^{-1} \gamma_i(\tau+\ve t),\qquad\qquad i=1,2.$$

Let $t_\ve^*=\O(1)\cdot \ve^{-2/3}$ be the time when
$$\Gamma_2(t^*_\ve)- \Gamma_1(t^*_\ve)~=~2\ve^{-2/3}.$$
Using Lemma~\ref{l:62}, 
 as in (\ref{VUed}) we obtain
\bel{VU12}\sup_{|x-\Gamma_i(t_\ve^*)|\leq \ve^{-2/3}} \bigl| U^\ve(t^*_\ve,x) - V_i^\ve(t^*_\ve,x) \bigr|~<~ \ve^{1/2},
\qquad\qquad i=1,2.\eeq
%
\v
{\bf 2.}
In the present setting, a new difficulty arises.  Namely, as shown in Fig.~\ref{f:asy65}, left,
using the previous arguments based on Lemma~\ref{l:62},  the difference $U^\ve-V^\ve_1$ can only be estimated for 
$x<\Gamma_{2\ve }(t)-\ve^{-2/3}$, and similarly the difference $U^\ve-V^\ve_2$ 
can only be estimated for $x>\Gamma_{1\ve}(t)+\ve^{-2/3}$.

\v

\begin{figure}[ht]
\centerline{\hbox{\includegraphics[width=14cm]{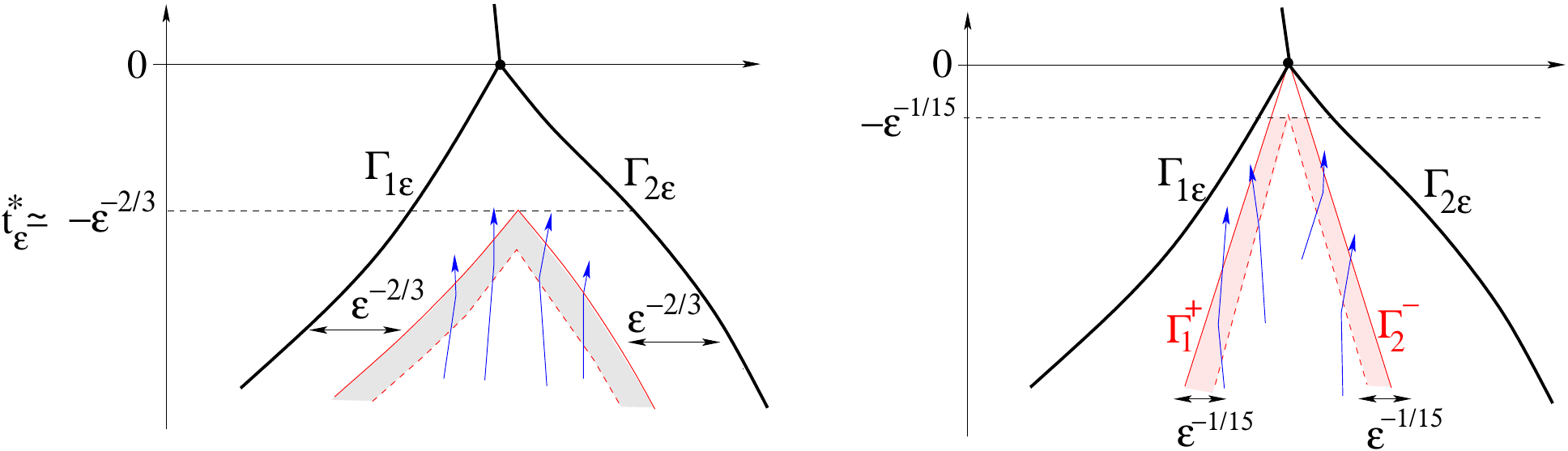}}}
\caption{\small Left: the shock curves $\Gamma_{1\ve},\Gamma_{2\ve}$, in the rescaled coordinates.
Based on the bound (\ref{err1}), transversality of characteristics is ensured only at a distance $>\ve^{-2/3}$.
Right: the argument at (\ref{supvu}) yields the transversality of characteristics up to time $\ve^{-1/15}$.}
\label{f:asy65}
\end{figure}

To extend the bounds (\ref{VU12}) at times $t>t_\ve^*$, we set $\lambda^*=f'(u^*)$ and 
 choose intermediate 
speeds
\bel{l123}\lambda_1~>~\mu_1~>~\lambda^*~>~\mu_2~>~\lambda_2\,,\eeq
and consider the straight lines (see Fig.~\ref{f:asy65}, right)
$$\Gamma_1^+(t) \,=\,  \mu_1t,\qquad\qquad \Gamma_2^-(t) \,=\,  \mu_2 t,\qquad\qquad t<0.$$
Let $t^\sharp$ be the first time where one of the inequalities 
\bel{supvu}\sup_{x\leq \Gamma_2^-(t)}  \Big|V_1^\ve(t,x)-U^\ve(t_\ve,x)\Big|~ \leq ~\ve^{1/2}
\qquad \sup_{x\geq \Gamma_1^+(t)}  \Big|V_2^\ve(t,x)-U^\ve(t_\ve,x)\Big|~ \leq ~ \ve^{1/2},
\eeq is satisfied as an equality.   As long as $t^\sharp<-\ve^{-1/15}$,
for $t\leq t^\sharp$ by (\ref{neartw2}) and (\ref{supvu}) we obtain
$$\sup_{\Gamma_1^+(t)<x<\Gamma_2^-(t)}\Big( \bigl| U^\ve_i(t,x) - u^*\bigr|+ \bigl| V^\ve_i(t,x) - u^*\bigr|\Big)~=~\O(1)\cdot \ve^{1/12}. $$
Hence (see Fig.~\ref{f:asy65}, right), always for $t< t^\sharp <-\ve^{-1/15}$
the characteristic speeds satisfy 
$$\bigl| f'(U^\ve_i(t,x)) - f'(u^*) \bigr|~=~\O(1)\cdot \ve^{1/12},\qquad\qquad \bigl| f'(V^\ve_i(t,x)) - f'(u^*) \bigr|~=~\O(1)\cdot \ve^{1/12}.$$
By (\ref{l123}) these characteristics
are strictly transversal to both $\Gamma_1^+$ and $\Gamma_2^-$.  For every $\ve>0$ small enough, we now apply Lemma~\ref{l:62} (i)
with $r=1/15$, $q=p$, choosing first $\lambda=\mu_1$, then $\lambda=\mu_2$
(the arbitrary constants in (\ref{l123})), and $\kappa = (\lambda_i-\mu_i)/2$.
This yields
$$\sup_{x\leq \gamma_2^-(t)}  \Big|V_1^\ve(t,x)-U^\ve(t_\ve,x)\Big|~ =~\O(1)\cdot \ve^p~<~\ve^{1/2},
$$
reaching a contradiction.
\v
{\bf 3.}  Now consider the rescaled solution $U^\ve$ at time $t_\ve =-\ve^{-1/14}$.  By the previous analysis, 
we have
\bel{supUep}\bega{l}\ds
\sup_{|x|<\ve^{-1/13}, \, x<\lambda^* t_\ve}  \Big|U^\ve(t_\ve,x)- \phi_1(x-\lambda_1 t_\ve -c_{1,\ve})\Big|~=~\O(1)
\cdot \ve^{1/12},\\[3mm]
\ds
\sup_{|x|<\ve^{-1/13}, \, x>\lambda^* t_\ve}  \Big|U^\ve(t_\ve,x)- \phi_2(x-\lambda_2 t_\ve -c_{2,\ve})\Big|~=~\O(1)
\cdot \ve^{1/12}.\enda\eeq
Calling $W=W(t,x)$ the eternal solution constructed at (\ref{45}), and setting 
$$W^\ve(t,x)~\doteq~W(t-\tau_\ve', x-\xi'_\ve)$$ for a suitable shift $(\tau'_\ve, \xi'_\ve)$ depending on $c_{1,\ve},c_{2,\ve}$, as $\ve\to 0$ 
we obtain
\bel{int123}\bega{l}\ds \int_{|x|<\ve^{-1/13}} \Big|U^\ve(t_\ve,x)-W^\ve(t,x)\Big|\, dx\
\\[4mm]
 \ds~=~\int_{|x|<\ve^{-1/13}, \, x<\lambda^* t_\ve}  \Big|U^\ve(t_\ve,x)- \phi_1(x-\lambda_1 t_\ve -c_{1,\ve})\Big| \, dx\\[4mm]
\qquad \ds +
\int_{|x|<\ve^{-1/13}, \, x>\lambda^* t_\ve} \Big|U^\ve(t_\ve,x)- \phi_2(x-\lambda_2 t_\ve -c_{2,\ve})\Big| \, dx~=~\O(1)\cdot \ve^{{1\over 13} - {1\over 12}}~\to ~0.
\enda\eeq
Note that, by Remark~\ref{r:41}, as in (\ref{ceb}) we have
\bel{ce12}
c_{1,\ve}, c_{2,\ve}~=~\O(1)\cdot (\ve^{-2/3} + \ve^{-5/6}).\eeq
In turn, this yields
$ \tau_\ve ', ~\xi_\ve ' ~=~\O(1)\cdot (\ve^{-2/3} + \ve^{-5/6})$.
Hence, in the original coordinates, the shifts approach zero:
$$\tau_\ve-\tau, ~ \xi_\ve ~=~\O(1)\cdot  \ve^{1/6}~\to~0\qquad\hbox{as} ~\ve\to 0.$$
\v
{\bf 4.} The last step of the proof is  the same as for the case of a single shock. 
We define the upper bound $\hat \lambda$ for all characteristic speeds as in (\ref{hatl}), and consider the domain 
$\D_\ve$ as in (\ref{Dep1}).  We then  modify $U^\ve$ by setting

\bel{Hep}
\Hat U^\ve(t_\ve,x)~=~\left\{ \bega{cl} U^\ve(t_\ve,x)\quad &\hbox{if} \quad |x|  \leq {2\over 3}\ve^{-1/13},\\[3mm]
W^\ve(t,x)\quad &\hbox{if} \quad |x|  \geq \ve^{-1/13}.
\enda\right.
\eeq
Recalling (\ref{int123}), we can define $\Hat U^\ve(t_\ve,x) $ in the intermediate region where ${2\over 3}\ve^{-1/13}<|x|< 
\ve^{1/13}$ in such a way that 
\bel{int2} \int_{-\infty}^{+\infty} \Big|\Hat U^\ve(t_\ve,x)-W^\ve(t,x) \Big|\, dx~=~\O(1)\cdot \ve^{{1\over 156}}.\eeq
Since the semigroup generated by the viscous conservation law (\ref{visc1}) is contractive w.r.t.~the $\L^1$ distance, for every $t>t_\ve$ we have
\bel{int13} \int_{-\infty}^{+\infty}  \Big|\Hat U^\ve(t,x)-W^\ve(t,x)\Big|\, dx~\le ~ \int_{-\infty}^{+\infty} 
\Big|\Hat U^\ve(t_\ve,x)-W^\ve(t,x)\Big|, dx~=~\O(1)\cdot \ve^{{1\over 156}}.\eeq
The difference $\Hat U^\ve-U^\ve$ on the domain $\D_\ve$ can be estimated as in (\ref{UHUerr}),
with $\kappa=1$, $\lambda= \pm \hat \lambda$, $r= 1/14$, $q=1$.   By (\ref{uvec}) it follows
\bel{UHUer}\Big|\Hat U^\ve(t,x)- U^\ve(t,x)\bigr|~=~\O(1)\cdot\ve\qquad\qquad\forall (t,x)\in \D_\ve\,.\eeq
Together with (\ref{int13}), for every $t\in [-\ve^{-1/14}, \ve^{-1/14}]$, this implies
\bel{int14} \int_{|x|\leq {1\over 3} \ve^{-1/13}- (t+\ve^{-1/14})\hat\lambda}
 \Big|U^\ve(t,x)-W^\ve(t,x)\Big|\, dx~=~\O(1)\cdot \ve^{{1\over 156}}~\to~0\eeq
as $\ve\to 0$.

Since the viscous solutions $U^\ve$ are uniformly Lipschitz continuous and the domains $\D_\ve$ invade the entire plane $\R^2$, arguing as in the proof of Lemma~\ref{l:61}, by (\ref{int14}) we conclude the convergence of the rescaled functions $U^\ve$ to a suitably shifted 
eternal solution $W$, uniformly on bounded sets.

\subsection{The case of variable initial data.}
Finally, we consider the case of a family of initial data $\bar u_\ve$ converging to $\bar u$ as $\ve\to 0$, as in 
(\ref{uelim1}).  For every $\ve>0$ small enough, at time $\tau$  the corresponding inviscid solution $u^{(\ve)}(\tau,\cdot)$ will have a shock at some  point $x_\ve(\tau)$, say with left and right  states $(u^-_\ve, u^+_\ve)$.
Here $x_\ve(\tau)\to \xi$, while $(u^-_\ve, u^+_\ve)\to u(\tau, \xi-), u(\tau, \xi+)$ as $\ve\to 0$.
Retracing all steps in the previous proof, we obtain the existence of domains $\Gamma_\ve\subset\R^2$,
traveling wave profiles $\phi_\ve\to \phi$ and constants $\delta_\ve\to 0$ such that the rescaled viscous
solutions satisfy
\bel{supep}
\sup_{(t,x)\in \Gamma_\ve} \bigl| U^\ve(t,x) - \phi_\ve(x-\lambda_\ve t- c_\ve)\bigr|~\leq~\delta_\ve\,.\eeq
We now observe that the domains $\Gamma_\ve$ and the constants $\delta_\ve$ remain valid for 
all initial data $\bar v$ sufficiently close to $\bar u$. More precisely, for all initial data $\bar v$ such that
\bel{uvclose}
\bigl\| \bar v - \bar u\bigr\|_{\L^1(\R)}~\leq ~\rho,\qquad\quad
\bigl\| \bar v - \bar u\bigr\|_{\C^0(\Omega_0)}~\leq \rho\eeq
for some $\rho>0$ sufficiently small.
Since $\|\phi_\ve- \phi\|_{\L^\infty} \to 0$  and $\lambda_\ve\to \lambda$, this yields the convergence 
$U^{\ve}(t,x)\to \phi(x-\lambda t)$ uniformly on bounded sets of $\R^2$.

An entirely similar argument applies to the case (ii), describing the interaction of two viscous shocks.

\section{A global viscous solution describing shock formation}
\label{sec:8}
\setcounter{equation}{0}

\begin{figure}[ht]
\centerline{\hbox{\includegraphics[width=9cm]{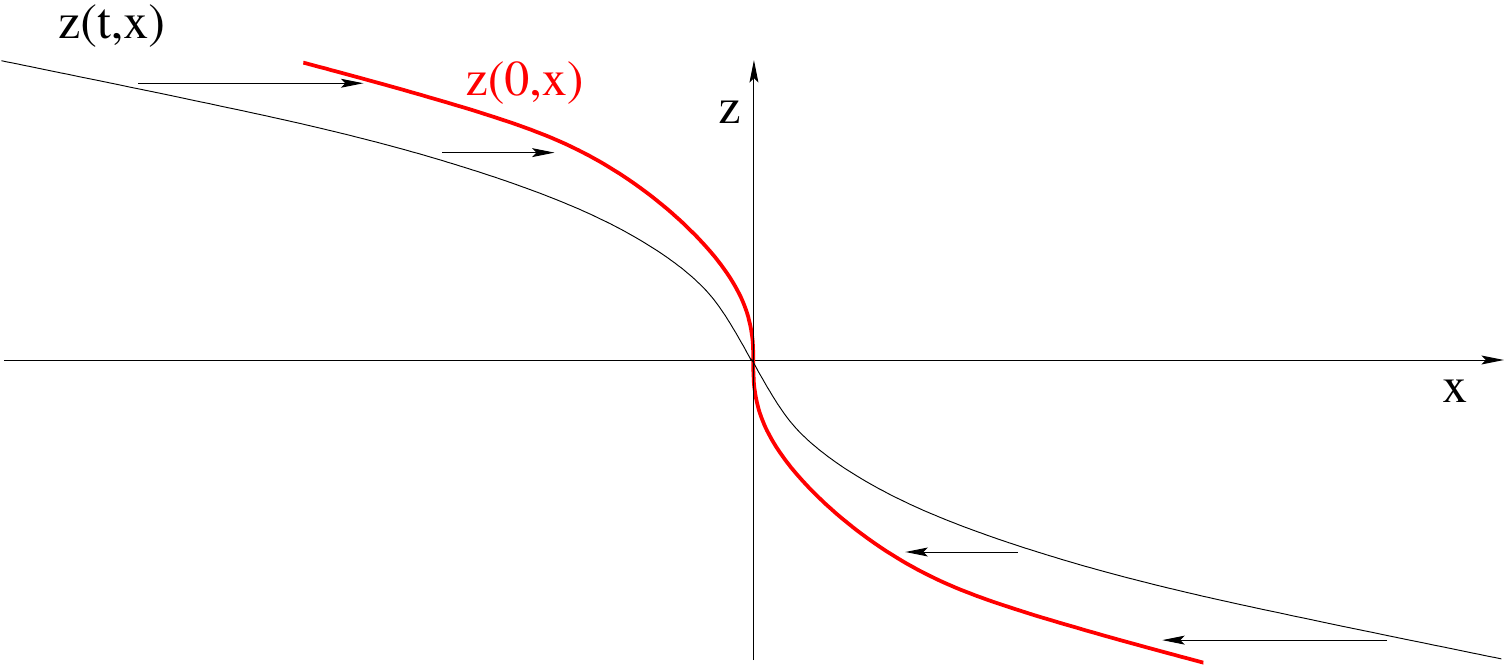}}}
\caption{\small  The solution to Burgers' equation implicitly defined by (\ref{xzc}), for $t\leq 0$.}
\label{f:asy7}
\end{figure}

Let $z=z(t,x)$ be the backward solution to Burgers' equation
\bel{zbur} z_t + zz_x~=~0,\qquad\qquad t\leq 0,\eeq
with terminal data
\bel{ztd} z(0,x)~=~-x^{1/3}.\eeq
For $t<0$ this solution is smooth (see Fig.~\ref{f:asy7}). Indeed, by the method of characteristics, 
the value $z=z(t,x)$ is implicitly defined by
\bel{xzc}
x~=~tz -z^3\qquad\qquad t\leq 0, ~x, z\in \R.\eeq
For  $t\leq 0, ~x\in \R$, the unique real root of~\eqref{xzc} has the same sign as $-x$. It is explicitly computed by
a classical formula due to Scipione del Ferro ($\sim$1540)
\bel{xzcsol} 
 z(t,x)~=~\sqrt[3]{-\frac x2+\sqrt{\frac{x^{2}}{4}-\frac{t^{3}}{27}}}+\sqrt[3]{-\frac x2-\sqrt{\frac{x^{2}}{4}-\frac{t^{3}}{27}}} 
  \,.\eeq
\begin{remark}{\rm For $t<0$ the two terms $-tz$ and $z^3$ have the same sign. Therefore
$$\max\bigl\{ |tz|, |z|^3\bigr\}~\leq~|x|~ \leq~2\max\bigl\{ |tz|, |z|^3\bigr\}$$
 This yields
\bel{zbo}
\min\left\{ \Big|{x\over 2t}\Big|\,,\, \Big|{x\over 2}\Big|^{1/3}\right\}~\leq~
\bigl|z(t,x)\bigr|~\leq~\min\left\{ \Big|{x\over t}\Big|\,,\, |x|^{1/3}\right\}.\eeq 
}
\end{remark}
For future use, we derive the implicit relations
\bel{xzz} x_z~=~t-3z^2, \qquad x_{zz}\,=\, -6z,\eeq
\bel{zxx} 
 z_x~=~{1\over x_z}~=~{1\over t-3z^2}\,,\qquad
 z_{xx}~=~{6zz_x\over (t-3z^2)^2}~=~{6z\over (t-3z^2)^3}\,,
 \eeq
 \bel{zxxx} \bega{rl}
 z_{xxx}&=~\ds {6z_{x}\over (t-3z^2)^3}+{108z^{2}z_{x}\over (t-3z^2)^4}\\
 &\ds=~{6 \over (t-3z^2)^4}-{108  \over (t-3z^2)^4}\cdot \frac{z^{2}}{3z^{2}-t}
 ~\in~\left[-{36 \over (t-3z^2)^4}\,,~{6 \over (t-3z^2)^4}\right]\,.
 \enda\eeq
We used in the last line that $\frac{z^{2}}{3z^{2}-t}=\frac{1}{3-t/z^{2}}\leq \frac13$ is bounded since we consider 
times $t<0$.
Notice that $z, z_{xx}$ are odd functions, while
$z_x, z_{xxx}$ are even functions.

 \begin{lemma}\label{l:81}
Let $t\leq 0, ~x\in \R$ and let $z(t,x)$ be the real root of~\eqref{xzc}.
\begi
\item[(i)] 
The first derivative $z_{x}=1/x_z$ satisfies $t^{-1}\leq z_x<0$.
\item[(ii)]
The second derivative satisfies $- |t|^{-5/2}<z_{xx}\leq 0$ for $x\leq0$, and $0\leq z_{xx}< |t|^{-5/2}$ for $x\geq 0$.
\item[(iii)]
The third derivative satisfies $\abs{z_{xxx}}\leq 36\cdot t^{-4}$.
\endi
\end{lemma}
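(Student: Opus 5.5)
The plan is to read everything off the implicit formulas (\ref{xzz})--(\ref{zxxx}), using only that for $t\le 0$ the quantity $D\doteq t-3z^2$ is nonpositive, with $|D| = |t|+3z^2\ge |t|$. The case $t=0$ is degenerate: there the derivatives blow up at $x=0$, and we understand the bounds for $t<0$, with the right-hand sides read as $+\infty$ when $t=0$. So fix $t<0$.

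For (i), we have $z_x = 1/D$ with $D\le t<0$. Hence $z_x<0$ and $|z_x| = 1/|D| \le 1/|t|$, which gives $t^{-1}\le z_x<0$.

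For (ii), use $z_{xx} = 6z/D^3$. Since $D<0$, we have $D^3<0$, so $z_{xx}$ has the sign of $-z$. By the remark after (\ref{xzcsol}), $z$ has the sign of $-x$, so $z_{xx}$ has the sign of $x$. This gives $z_{xx}\le 0$ for $x\le 0$ and $z_{xx}\ge0$ for $x\ge 0$.

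For the magnitude, we need to bound
$$|z_{xx}| = \frac{6|z|}{(|t|+3z^2)^3}$$
uniformly in $z$. Set $s=z^2$ and maximize $g(s)=\sqrt s\,(|t|+3s)^{-3}$. Differentiating, the maximum occurs where $(|t|+3s) = 18 s$, that is, at $s=|t|/15$. At that point $|t|+3s = \tfrac{6}{5}|t|$. Therefore
$$|z_{xx}|\le 6\,\sqrt{|t|/15}\,\Big(\tfrac{5}{6}\Big)^3 |t|^{-3} = c\,|t|^{-5/2},\qquad c = 6\cdot 15^{-1/2}(5/6)^3\approx 0.9<1 .$$
This gives the strict bound $|z_{xx}|<|t|^{-5/2}$. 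The only real work in the lemma is this one-variable optimization, and checking that the constant is indeed below $1$ is the main point to verify carefully. Alternatively, a cruder bound $|t|+3s\ge 2\sqrt{3|t|s}$ combined with $|t|+3s\ge |t|$ can be tried first, but it may not give a constant $\le1$, so the exact maximization is the safer route.

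For (iii), the last line of (\ref{zxxx}) already gives $z_{xxx}\in[-36D^{-4},\,6D^{-4}]$. This relies on $z^2/(3z^2-t)\in[0,1/3]$, which holds because $3z^2-t\ge 3z^2$ when $t\le0$. Then $|z_{xxx}|\le 36 D^{-4}\le 36\,t^{-4}$, since $D^4\ge t^4$. Finally, I would double-check the expression for $z_{xxx}$ by differentiating $z_{xx}=6z D^{-3}$ directly, using $D_x = -6z z_x$. This yields $z_{xxx} = 6z_xD^{-3}+108 z^2 z_x D^{-4}$, consistent with the paper.
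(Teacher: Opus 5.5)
Your proof is correct and follows essentially the paper's argument. Parts (i) and (iii) are read directly off \eqref{zxx}--\eqref{zxxx} using $|t-3z^2|\ge |t|$, and (ii) is the same one-variable maximization at $z^2=|t|/15$, giving the constant $6\cdot 15^{-1/2}(5/6)^3\approx 0.9<1$. Your explicit treatment of the sign of $z_{xx}$ and of the degenerate case $t=0$ makes precise points the paper leaves implicit.
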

\v
{\bf Proof.} 
From (\ref{zxx}) it immediately follows
\bel{zxbound}{1\over t} \leq z_x < 0\qquad\qquad\forall t<0, \, x\in\R.\eeq
Note that $z_x(t,x)\to 0$ as  $z\to\pm\infty$, since we are considering the case $t\leq 0$.

Differentiating the second identity in (\ref{zxx}) we find
$${d\over dz} {6z\over (t-3z^2)^3}~=~{6\over (t-3z^2)^3} +6z\cdot {(-3)\cdot (-6z)\over (t-3z^2)^4}
~=~{6(t+15 z^2)\over   (t-3z^2)^4}\,.  $$
This vanishes when 
$t+15 z^2=0$.   Setting $z^2 = -t/15$ in the last expression of (\ref{zxx}) we obtain
\bel{zxxb}|z_{xx}|~\leq~{6\sqrt{-t/15}\over \bigl| t-3(-t/15)\bigr|^3}~=~{1\over\sqrt 3\cdot 36} \left(-t\over 5\right)^{-5/2}
~<~|t|^{-5/2}.\eeq

Finally, the bound on $|z_{xxx}|$ follows immediately from (\ref{zxxx}).  \endproof

\v
Next, we seek a solution to the viscous Burgers' equation
\bel{Z0}Z_t + ZZ_x~=~Z_{xx}\,,\eeq
which, as $t\to -\infty$, asymptotically coincides with the backward solution $z$
of (\ref{zbur})-(\ref{ztd}). 

Toward this goal, we construct the sequence of functions $Z^{(n)}:[-n,+\infty [\,\times\R\mapsto \R$, 
where each $Z^{(n)}$ provides the solution to (\ref{Z0})
with initial data at $t=-n$ given by
\bel{Zn}Z(-n,x)~=~z(-n,x).\eeq

\begin{lemma}\label{l:82}
As $n\to\infty$, the sequence $(Z^{(n)})_{n\geq 1}$ converges to a function $Z :\,\R\times \R\mapsto\R$
uniformly on bounded sets.   The sequence is monotone increasing for $x\geq 0$ and
monotone decreasing for $x\leq 0$.
\end{lemma}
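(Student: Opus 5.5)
The plan is to compare each $Z^{(n)}$ with the inviscid backward solution $z$, using the comparison principle for the viscous Burgers equation (\ref{Z0}). The key observation is that $z$ is a sub/supersolution of (\ref{Z0}) on the two half-lines. Since $z_t+zz_x=0$, we have
$$z_t+zz_x-z_{xx}~=~-z_{xx}.$$
By Lemma~\ref{l:81}(ii), $z_{xx}\geq 0$ for $x\geq 0$ and $z_{xx}\leq 0$ for $x\leq 0$. Hence $z$ is a subsolution on $\{x\geq 0\}$ and a supersolution on $\{x\leq 0\}$. Both $z$ and every $Z^{(n)}$ are odd in $x$, because the equation and the data are invariant under $u(t,x)\mapsto -u(t,-x)$ and uniqueness holds. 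Therefore $Z^{(n)}(t,0)=z(t,0)=0$ for all $t$, and the comparison can be carried out on the half-plane $x\geq 0$ with zero Dirichlet boundary value at $x=0$.

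\textbf{Step 1: monotonicity.} On the quarter plane $\{t\geq -n,\ x\geq 0\}$, the functions $z$ and $Z^{(n)}$ coincide at $t=-n$ and at $x=0$, and $z$ is a subsolution. The maximum principle, applied to the difference (which satisfies a linear parabolic equation with bounded coefficients on compact time intervals, growth at infinity being controlled by (\ref{zbo})), gives $Z^{(n)}\geq z$ for $x\geq 0$, $t\in[-n,0]$.

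Now apply this at $t=-n$ to $Z^{(n+1)}$: we get $Z^{(n+1)}(-n,x)\geq z(-n,x)=Z^{(n)}(-n,x)$ for $x\geq0$. Comparing again on the quarter plane gives $Z^{(n+1)}\geq Z^{(n)}$ for $x\geq 0$. By oddness the reverse inequality holds for $x\leq 0$. For $t>0$ we need not compare with $z$ at all: the ordering at the common time $-n$ is simply propagated forward by comparison on the half-line.

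\textbf{Step 2: uniform bounds.} Monotonicity alone does not give convergence; we need an upper bound for $Z^{(n)}$ on $x\geq 0$, uniform in $n$ on bounded sets. I would look for a supersolution of the form
$$\bar z(t,x)~=~z(t+c,\,x)\quad\hbox{or}\quad z(t,x)+C|t|^{-1/2}\ \hbox{(suitably corrected)},$$
exploiting the bound $|z_{xx}|<|t|^{-5/2}$. The error term $\int_{-\infty}^{t}|s|^{-5/2}\,ds$ is integrable as $s\to-\infty$, so an additive correction $\int_{-\infty}^t |s|^{-5/2}ds$ (or a correction $\sim |t|^{-3/2}$) absorbs the viscous defect. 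Concretely, I would try $\bar z=z+\phi(t)$ with $\phi'\geq |t|^{-5/2}$; the extra term $\phi z_x$ has a favourable sign because $z_x<0$ and $\phi\geq 0$. This gives $z\leq Z^{(n)}\leq z+\frac23|t|^{-3/2}$ on $x\geq 0$, $t\leq -1$.

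For $t\in[-1,T]$, boundedness on bounded sets follows from $L^\infty$ bounds on compact $x$-sets. Here I would use the cubic growth of $z$, local parabolic estimates, and $L^1_{loc}$ contraction from the ordered data at $t=-1$. Standard parabolic regularity then gives uniform Lipschitz bounds on compact sets.

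\textbf{Step 3: convergence.} Monotone bounded convergence yields a pointwise limit $Z$. Equicontinuity upgrades this to uniform convergence on bounded sets, and the limit solves (\ref{Z0}).

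I expect the main obstacle to be Step 2: constructing a supersolution valid uniformly as $n\to\infty$ and handling the unbounded growth of $z$ as $|x|\to\infty$, so that comparison is legitimate on unbounded domains. I would first try the additive correction above, checking the sign of the term $\phi z_x$ carefully.
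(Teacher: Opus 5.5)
Your Step~1 is correct, and it reaches the key inequality $Z^{(n)}\ge z$ on $x\ge 0$ by a more direct route than the paper.

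\textbf{The paper's route.} It first propagates sign information from the data. Using maximum-principle arguments on the differentiated equations (\ref{Ztx})--(\ref{Ztxx}), together with oddness, it shows $Z^{(n)}_x<0$ and $Z^{(n)}_{xx}\ge 0$ for $x\ge 0$. Hence $Z^{(n)}_t+Z^{(n)}Z^{(n)}_x\ge 0$ there, so $Z^{(n)}$ is a supersolution of the \emph{inviscid} equation (\ref{zbur}). A first-order comparison with $z$ then gives $Z^{(n)}\ge z$ on $[-n,-m]\times\R_+$.

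\textbf{Your route.} You use the explicit sign of $z_{xx}$ from Lemma~\ref{l:81}(ii) to see that $z$ is a subsolution of the \emph{viscous} equation (\ref{Z0}) on $x\ge 0$. This gives the same inequality in one parabolic comparison and avoids the derivative equations entirely. The final comparison of $Z^{(m)}$ and $Z^{(n)}$ on $[-m,+\infty[\,\times\R_+$ is identical in both. What the paper's detour buys is monotonicity and convexity of $Z^{(n)}$ itself, which it reuses in Remark~\ref{r:82}.

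\textbf{The error in Step~2.} Your sign claim there is wrong. For $\bar z=z+\phi(t)$ the residual is
$\bar z_t+\bar z\bar z_x-\bar z_{xx}=\phi'+\phi z_x-z_{xx}$.
With $\phi\ge 0$ and $z_x<0$, the term $\phi z_x$ works \emph{against} you. This is exactly why Remark~\ref{r:82} uses the moving frame $z(t,x-\sigma(t))$ with $\dot\sigma=\frac23|t|^{-3/2}$, which cancels that term.

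With your choice $\phi=\frac23|t|^{-3/2}$, so $\phi'=|t|^{-5/2}$, the check fails. For instance, where $|z|=\frac15|t|^{1/2}$ one gets $z_{xx}+\phi|z_x|\approx 1.45\,|t|^{-5/2}>\phi'$.

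Without a shift you can still close the estimate using $|z_x|\le|t|^{-1}$, by taking $\phi=2|t|^{-3/2}$. Then $\phi'=3|t|^{-5/2}>2|t|^{-5/2}+z_{xx}$, which gives $z\le Z^{(n)}\le z+2|t|^{-3/2}$, the same bound as (\ref{Zbo}).

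\textbf{Step~2 is not needed for this lemma.} Compare $Z^{(n)}$ with the zero solution on $\{t\ge -n,\ x\ge 0\}$: the data $z(-n,\cdot)\le 0$ there, and the boundary value is $0$. This gives $Z^{(m)}\le Z^{(n)}\le 0$ for $x\ge 0$, $t\ge -m$, $n\ge m$. That supplies uniform bounds on every bounded set, including $t\ge -1$, where your sketch was only heuristic. Interior parabolic estimates then give the equicontinuity you need in Step~3.
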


{\bf Proof.} {\bf 1.} We observe that all functions $Z^{(n)}, Z^{(n)}_{xx}$ are odd, while  $Z^{(n)}_x, Z^{(n)}_{xxx}$
are even.

We claim that, for all $t\in [-n,0]$, 
\bel{Zxx}\bega{rl}Z^{(n)}_{xx}(t,x)  \geq 0\qquad\hbox{for}~~x\geq 0,\\[2mm]
Z^{(n)}_{xx}(t,x)  \leq 0\qquad\hbox{for}~~x\leq 0.\enda\eeq
Indeed, differentiating (\ref{Z0}) one finds
\bel{Ztx}Z_{xt} + ZZ_{xx}~=~- (Z_x)^2 +Z_{xxx}\,,\eeq
\bel{Ztxx}Z_{xxt} + ZZ_{xxx} + 3 Z_x Z_{xx}~=~Z_{xxxx}\,,\eeq
Observing that
$$Z^{(n)}_{x}(-n,x)~=~z_x(-n,x)~<~0,$$
by (\ref{Ztx}) a comparison argument yields 
$$Z^{(n)}_{x}(t,x)~<~0\qquad\qquad\forall t\geq -n, ~x\in \R.$$
Moreover, since 
$$Z^{(n)}_{xx}(-n,x)~=~z_{xx}(-n,x)~\geq ~0\qquad\forall x\geq 0,$$
$$Z^{(n)}(t,0)~=~Z^{(n)}_{xx}(t,0)~=~0\qquad\forall t\geq -n,$$
by  (\ref{Ztxx}) a comparison argument yields 
\bel{Z+} Z^{(n)}_{xx}(t,x)~\geq ~0\qquad\qquad\forall t\geq -n, ~x\geq 0.\eeq
\v
{\bf 2.} Next, assume $1\leq m<n$.   We claim that 
\bel{Zmn} Z^{(n)}(t,x)~\geq~Z^{(m)}(t,x)\qquad\forall  t\geq -m, ~x\geq 0.\eeq
Indeed, by (\ref{Z+}) it follows 
$$Z^{(n)} + Z^{(n)}Z^{(n)}_x~\geq~0\qquad \forall t\geq -n,~x\ge 0\,.$$
Hence $Z^{(n)}$ is a supersolution to (\ref{zbur}) on $[-n, -m]\times \R_+$,
taking the same values as $z$ at $t=-n$ and at $x=0$.  A comparison argument yields
$$Z^{(n)}(t,x)\geq z(t,x)\qquad \forall (t,x)\in [-n, -m]\times \R_+\,.$$
On the common domain where $ (t,x)\in [-m,+\infty [\,\times \R_+$, the functions $Z^{(m)}, Z^{(n)}$ satisfy the same 
equation (\ref{Z0}), with boundary values
$$Z^{(m)}(-m, x)~=~z(-m,x)~\leq~  Z^{(n)}(-m, x), \qquad\qquad Z^{(m)}(t,0)\,=\,Z^{(n)}\,=\,0.$$
By comparison, this implies
$$Z^{(m)}(t, x)~\leq~  Z^{(n)}(t, x)\qquad\qquad\forall (t,x)\in  [-m,+\infty[\,\times \R_+\,.$$
Therefore, the sequence  $Z^{(n)}(t, x)$ is monotone increasing, for all $x\geq 0$.
Since all these functions are odd, the sequence  $Z^{(n)}(t, x)$ is monotone decreasing, for all $x\leq 0$.
This proves convergence. \endproof

\begin{remark}\label{r:82}{\rm  By (\ref{Ztx}),  another comparison argument 
shows that the first derivatives $Z^{(n)}_x$ satisfy
$${1\over t}~\leq~Z^{(n)}_x(t,x)~\leq~0\qquad\qquad\forall t<0, ~x\in \R.$$

Next, for any $t<0$, define
$$\sigma(t)~\doteq~\int_{-\infty}^t {2\over 3}  |s|^{-3/2}\, ds~=~{4\over 3} |t|^{-1/2}.$$
By another comparison argument, in view of (\ref{zxxb})  we claim that
\bel{Zbb} \left\{\bega{rl} 
z(t,x) ~\leq~Z(t,x)~\leq z\bigl(t,x-\sigma(t)\bigr) +  {2\over 3} |t|^{-3/2}\qquad\qquad  &\hbox{for}~~x\geq 0,\\[2mm]
z\bigl(t,x+\sigma(t)\bigr) -  {2\over 3} |t|^{-3/2}~\leq~Z(t,x) ~\leq~z(t,x)\qquad\qquad &\hbox{for}~~x\leq 0.\enda\right.\eeq
Indeed, to prove the inequality for $x\geq 0$, set $\Tilde Z(t,x)= z\bigl(t,x-\sigma(t)\bigr) +  {2\over 3} |t|^{-3/2}$. Then
$$\bega{rl} \Tilde Z_t&=~z_t -\dot \sigma(t) z_x +  |t|^{-5/2} ~=\ds~- zz_x - {2\over 3} |t|^{-3/2} z_x +  |t|^{-5/2} \\[2mm]
&=~ - \Big(\Tilde Z -   {2\over 3} |t|^{-3/2}\Big) \Tilde Z_x- {2\over 3} |t|^{-3/2} \Tilde Z_x+ |t|^{-5/2} ~\geq~-\Tilde Z\Tilde Z_x  + \Tilde Z_{xx}\,,
\enda
$$
showing that $\Tilde Z$ is an upper solution to (\ref{Z0}), with $\Tilde Z(t,0)\geq z(t,0)=0$.  The case $x\leq 0$ is entirely similar.

Since $t^{-1}\leq z_x\leq 0$, we have 
$$\bigl| z(t,x) - z(t, x\pm\sigma(t)\bigr| ~\leq~ t^{-1} \sigma(t)~\leq~2t^{-3/2}.$$
Hence the above implies
\bel{Zbo} \left\{\bega{rl} 
z(t,x) ~\leq~Z(t,x)~\leq z(t,x) + {2} |t|^{-3/2}\qquad\qquad  &\hbox{for}~~x\geq 0,\\[2mm]
z(t,x) - {2} |t|^{-3/2}~\leq~Z(t,x) ~\leq~z(t,x)\qquad\qquad &\hbox{for}~~x\leq 0.\enda\right.\eeq
}
\end{remark}

\section{Proof of Theorem~\ref{t:2} - preliminaries}
\label{sec:9}
\setcounter{equation}{0}
Toward a proof of Theorem~\ref{t:2}, we first consider the case where all solutions $u_\ve$ have the same initial data:
\bel{uepzero}
u_\ve(0,x)~=~\bar u(x)\qquad\qquad x\in \R.\eeq
Assume that, in the inviscid solution, the shock forms
at the point $P=(\tau,\xi)$. 
To simplify the notation, by performing an affine transformation
of the $(t,x,u)$ coordinates as in Remark~\ref{r:11}, we assume
\bel{ufa}\xi=0,\qquad u(\tau,\xi)=0,\qquad\qquad f(0)= f'(0)=0,\qquad f''(0)=1.\eeq
Moreover, at the point of blow up, we can also assume that the inverse function $x=x(\tau,u)$ satisfies
\bel{ifu} x_{uuu}(\tau,0)\,=\,-6,\qquad\qquad x(\tau,u)\,= - u^3 +\O(1) \cdot u^4.\eeq
Note that the three parameters $c,\sigma,\lambda$ in (\ref{PZdef}) must be inserted 
precisely to keep track of this affine transformation of variables.

The following notation will be used.
\begi
\item The rescaling of the inviscid solution $u$ is denoted by
\bel{U(e)} U^{(\ve)}(t,x)~=~\ve^{-1/4}\Big[ u\bigl(\tau+  \ve^{1/2}t, ~\ve^{3/4}x\bigr)\Big].\eeq
\item The rescaling of the viscous solution $u_\ve$ is denoted by
\bel{Ue} U^\ve(t,x)~=~\ve^{-1/4}\Big[ u_\ve\bigl( \tau+ \ve^{1/2}t, ~\ve^{3/4}x\bigr)\Big].\eeq
\item $z(t,x)$ is the backward solution of Burgers' equation considered at (\ref{zbur})-(\ref{ztd}) . It is invariant under rescaling.
\item $Z(t,x)$ is the solution of  the viscous Burgers' equation (\ref{Z0}) constructed in Lemma~\ref{l:82}.
\endi
To help the reader, we first outline the main steps of the proof, with the aid of Fig.~\ref{f:asy72}.

\begin{figure}[ht]
\centerline{\hbox{\includegraphics[width=5cm]{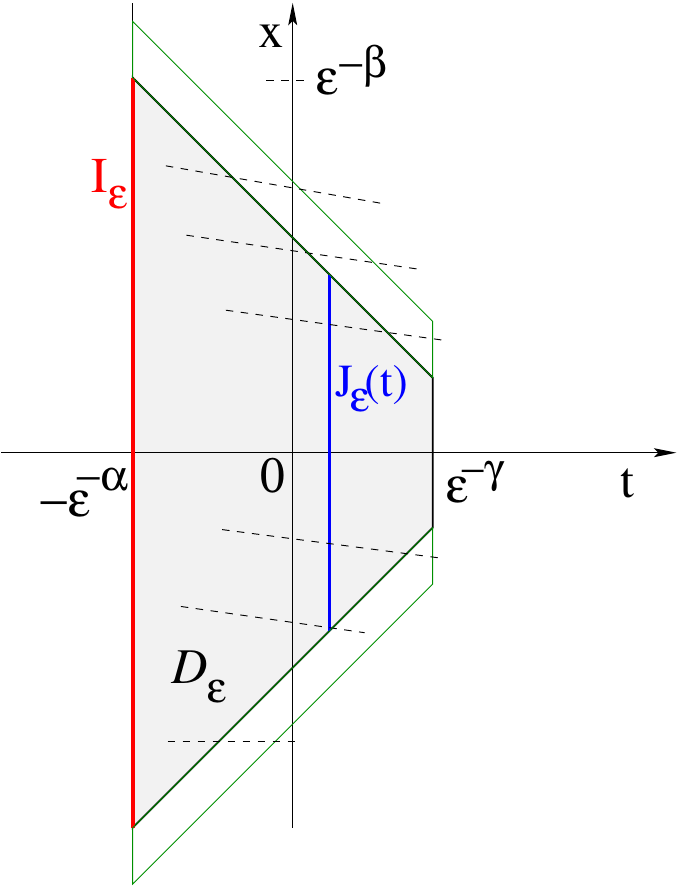}}
}
\caption{\small The construction used in the proof of Theorem~\ref{t:2}.
Here the trapezoidal domain has the form $\D_\ve = \bigl\{ (t,x)\,;~t\in [ -\ve^{-\alpha}, \ve^{-\gamma}],~~|x|\leq r_\ve(t)\bigr\}$.}
\label{f:asy72}
\end{figure}

\begi
\item[{\bf (I)}] For any small $\ve>0$, we consider a segment in the $t$-$x$ plane:
\bel{Iepdef}I_\ve = \{-\ve^{-\alpha}\}\times \bigl[-\ve^{-\beta}, \, \ve^{-\beta}\bigr]~\subset~\R^2.\eeq
Call $t_\ve\doteq -\ve^{-\alpha}$. By suitably choosing the exponents $0<\beta<\alpha$, as $\ve\to 0$ we achieve
the $\L^1$ convergence of the inviscid solutions
\bel{conv1}
\int_{-\ve^{-\beta}}^{\ve^{-\beta}} \bigl| U^{(\ve)}(t_\ve,x) - z(t_\ve,x)\bigr|\, dx~\to~0.\eeq
\item[{\bf (II)}]    We then show that, as $\ve\to 0$, the difference between viscous and inviscid solutions
also approaches zero in $\L^1(I_\ve)$. Namely
\bel{conv2}
\int_{-\ve^{-\beta}}^{\ve^{-\beta}} \bigl| Z(t_\ve,x) - z(t_\ve,x)\bigr|\, dx~\to~0.\eeq
\bel{conv3}
\int_{-\ve^{-\beta}}^{\ve^{-\beta}} \bigl| U^\ve(t_\ve,x) - U^{(\ve)} (t_\ve,x)\bigr|\, dx~\to~0.\eeq
In turn, this implies
\bel{conv4}
\int_{-\ve^{-\beta}}^{\ve^{-\beta}} \bigl| U^\ve(t_\ve,x) - Z (t_\ve,x)\bigr|\, dx~\to~0.\eeq
\item[{\bf (III)}]  As shown in Fig.~\ref{f:asy72}, we then consider a domain of dependence $\D_\ve$, so that all characteristics cross the lateral boundary of $\D_\ve$ 
strictly in the outward direction.
Introducing the segments
$$J_\ve(t)~\doteq~\{ (s,x)\in \D_\ve\,;~~s=t\bigr\}~\doteq~\bigl\{ (t,x)\,;~~|x|\leq r_\ve(t)\bigr\}$$
for a suitable affine function $r_\ve(\cdot)$, 
this will imply
\bel{conv5} \int_{-r_\ve(t)}^{r_\ve(t)} \bigl| U^\ve(t,x) - Z (t,x)\bigr|\, dx
~=~\O(1)\cdot \int_{-\ve^{-\beta}}^{\ve^{-\beta}} \bigl| U^\ve(t_\ve,x) - Z (t_\ve,x)\bigr|\, dx
~\to ~0\eeq
as $\ve\to 0$.
\item[{\bf (IV)}]  As $\ve\to 0$, the domains $\D_\ve$ invade the entire plane $\R^2$.
Since the functions $U^\ve, Z$ are solutions to a conservation law with unit viscosity,
they are uniformly Lipschitz continuous on bounded sets of the plane.
Therefore, the $\L^1$ convergence  (\ref{conv5})  implies the pointwise convergence $U^\ve(t,x) \to Z (t,x)$,
uniformly on bounded sets. This will achieve the proof.
\endi

\begin{figure}[ht]
\centerline{\hbox{\includegraphics[width=13cm]{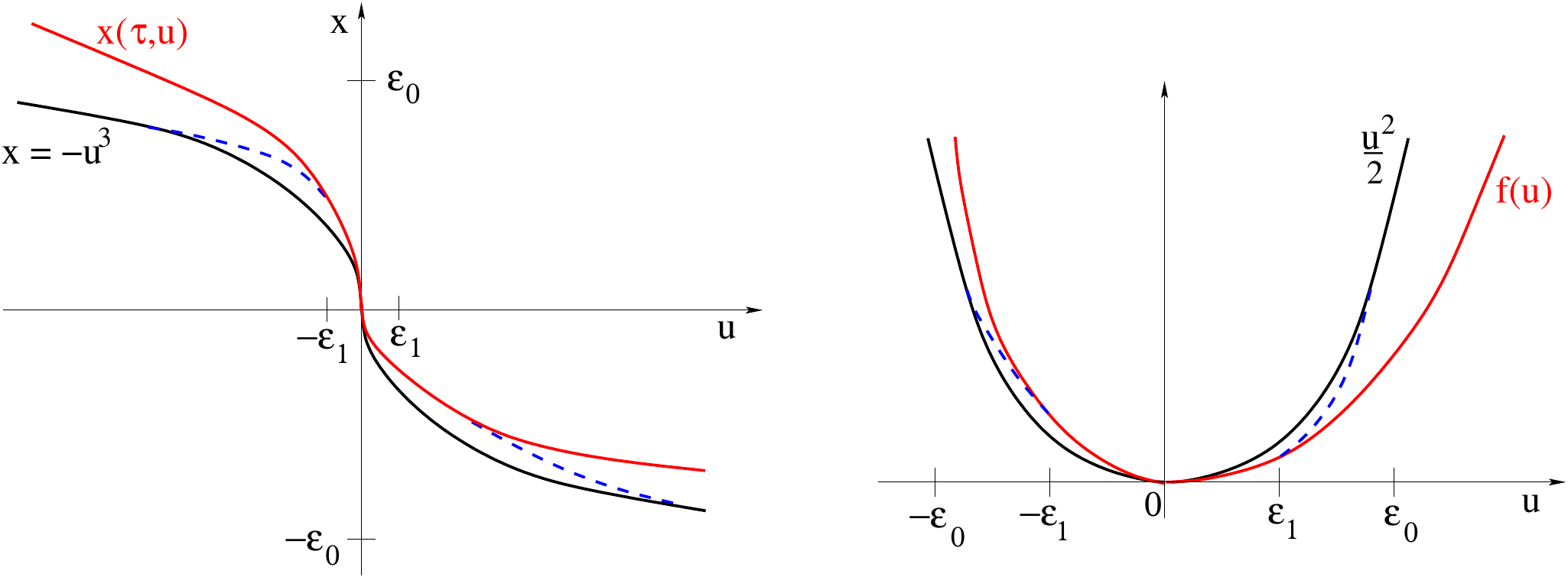}}}
\caption{\small   Left: the function $\Tilde x(\tau,\cdot)$ with the properties {\bf (P1)} is obtained by interpolating the graphs of $x(\tau,\cdot)$ and $Y(u)=-u^3$ (dashed line).   Right:   the flux function $\Tilde f$ with the properties {\bf (P2)} is obtained by interpolating
the graphs of $f$ and $B(u)=u^2/2$ (dashed line).
}\label{f:asy73}
\end{figure}

As a further preliminary, we  observe that the asymptotic properties of 
the rescaled solutions $U^{(\ve)}, U^\ve$ are not affected if the initial data $\bar u$ or the flux function $f$ are 
modified outside a neighborhood of the origin. 
 
More precisely, consider the function $Y(u)\doteq - u^3$ and Burgers' flux function
$B(u) \doteq u^2/2$. Given any small $\epsilon_0>0$, consider a terminal condition
$\Tilde u(\tau,\cdot)$ and a flux function $\Tilde f$ with the following properties:

\begi
\item[{\bf (P1)}] {\it The profile $x\mapsto \Tilde u(\tau,x)$ is monotone decreasing.  Moreover, its inverse function $u\mapsto \Tilde x(\tau,u)$ satisfies\label{P1}
 (see Fig.~\ref{f:asy73}, left)
\bel{ibu2}
 \Tilde x(\tau,u)\,=\,Y(u) \quad \hbox{whenever}~~|u|\geq \epsilon_0\,,
 \qquad \qquad   \bigl\| \Tilde x(\tau,\cdot) -Y(\cdot) \|_{\C^3(\R)}\,\leq\,\epsilon_0\,.\eeq
At $u=0$, the first three derivatives of $\Tilde x(\tau,\cdot)$ and $Y$ coincide:
 \bel{ibu3}
\Tilde x(\tau,0)\,=\,\Tilde x_u(\tau,0)\,=\, \Tilde x_{uu}(\tau,0)\,=\,0, \qquad \Tilde x_{uuu}(\tau,0)\, = \, -6.\eeq
 Moreover, for some $\epsilon_1>0$ there holds
 \bel{utu} \Tilde u(\tau,x)~=~u(\tau,x)\qquad\forall |x|\leq \epsilon_1\,.\eeq

 }
 \item[{\bf (P2)}] {\it One has  (see Fig.~\ref{f:asy73}, right)
\bel{fB} 
\Tilde f(u)~=~B(u)\quad \hbox{whenever} ~~|u|\geq\epsilon_0\,,
\qquad\qquad  \|\Tilde f-B\|_{\C^2(\R)}\,\leq\,\epsilon_0\,,\eeq
\bel{f00}
\Tilde f(0)\,=\,\Tilde f'(0)\,=\,0,\qquad \Tilde f''(0)\,=\,1.\eeq
Moreover, for some $\epsilon_1>0$ there holds
\bel{ftf} \Tilde f(u) = f(u)\qquad \hbox{whenever} \quad |u|\leq \epsilon_1\,.\eeq
}
\endi

\begin{figure}[ht]
\centerline{\hbox{\includegraphics[width=9cm]{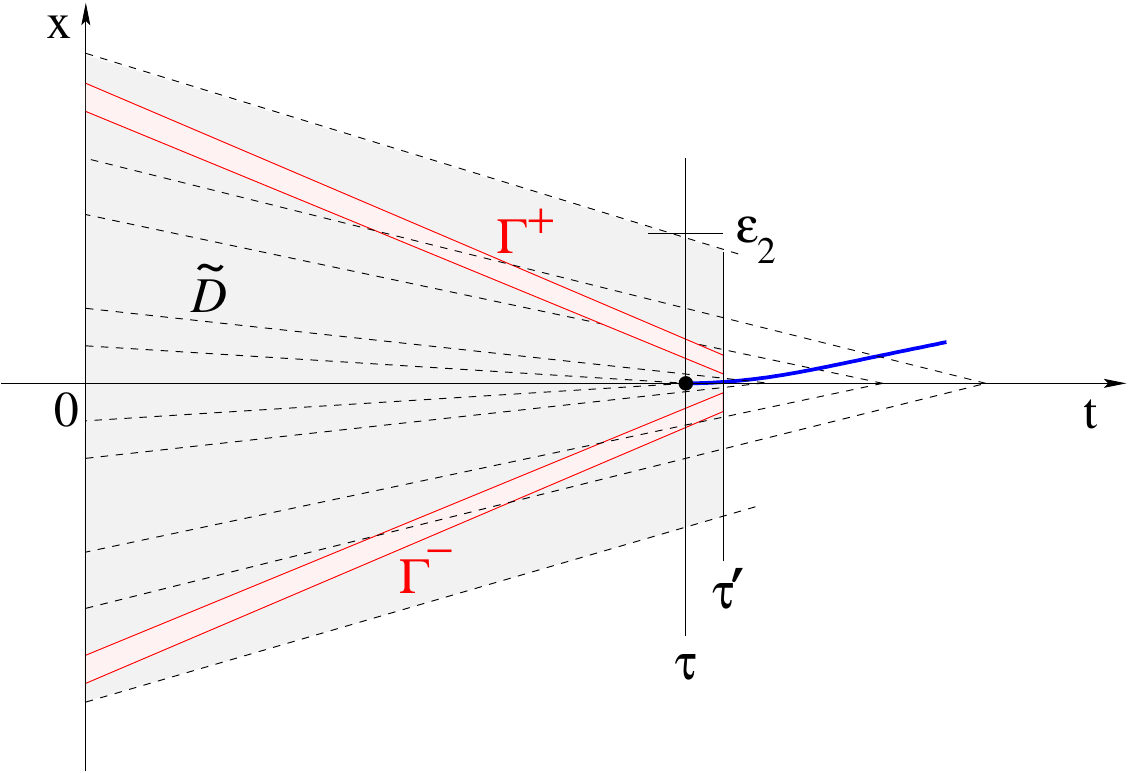}}}
\caption{\small  The construction showing that the modifications of the data 
in Fig.~\ref{f:asy73} do not change the asymptotic limit of rescaled solutions in a neighborhood of the point $(\tau,0)$ where a new shock is formed.}
\label{f:asy74}
\end{figure}

Let $u=u(t,x)$ and $\Tilde u=\Tilde u(t,x)$ be the solutions to
\bel{utilde2}u_t + f(u)_x=0,\qquad\qquad \Tilde u_t + \Tilde f(\Tilde u)_x =0,\eeq
respectively, with data $u(\tau,\cdot)$ and $\Tilde u(\tau,\cdot)$ as in {\bf (P1)}, 
assigned at time $t=\tau$.
Notice that, because of (\ref{utu}) and (\ref{ftf}), these two solutions coincide on a domain of the form
\bel{TDdef}\Tilde \D~\doteq~\Big\{ (t,x)\,;~~t\in [0, \tau']\,,~~-\epsilon_2 +(t-\tau) 
f'\bigl(u(\tau,-\epsilon_2)\bigr) \,\leq \,x \,\leq \,\epsilon_2 +(t-\tau) f'\bigl(u(\tau,\epsilon_2)\bigr)\Big\},\eeq
for some $\tau'>\tau$ and some $\epsilon_2\in \,]0, \epsilon_1[\,$.

As shown in Fig.~\ref{f:asy74}, we now construct two disjoint domains
\bel{Gpmor}\bega{l}
\Gamma^+ ~=~\Big\{ (t,x)\,;~~t\in [0, \tau'], ~~x\in \bigl[ \delta- \lambda(t-\tau'), ~4\delta -  \lambda(t-\tau')\bigr] \Big\},\\[2mm]
\Gamma^- ~=~\Big\{ (t,x)\,;~~t\in [0, \tau'], ~~x\in \bigl[ -4\delta+ \lambda(t-\tau'), ~-\delta +  \lambda(t-\tau')\bigr] \Big\},\enda
\eeq
choosing $\delta>0$ and $\lambda>0$ so that 
\bel{Gint} \Gamma^+\cup\Gamma^-~\subset~\Tilde \D,\eeq
and moreover, for some $\delta_0>0$, strict transversality holds:
\bel{cross}\left\{ \bega{cl} f'\bigl(u(t,x)\bigr) > - \lambda +\delta_0 
\qquad&\forall (t,x)\in \Gamma^+,\\[2mm]
 f'\bigl(u(t,x)\bigr) <  \lambda -\delta_0 
\qquad&\forall (t,x)\in \Gamma^-.\enda\right.\eeq
We also consider the subdomain
\bel{D*def}\D^*~\doteq~\Big\{ (t,x)\,;~~t\in [0, \tau']\,,~~
x\in \bigl[ -\delta+ \lambda(t-\tau'), ~\delta -  \lambda(t-\tau')\bigr]\Big\}.\eeq
By (\ref{Gint}) it follows $\D^*\subset\Tilde \D$.   Hence the two inviscid 
solutions $u, \Tilde u$ trivially coincide on $\D^*$.   Thanks to the transversality
property (\ref{cross}), we will show that the corresponding viscous solutions $u_\ve, \Tilde u_\ve$ are
also extremely close on $\D^*$.

More precisely, call $u_\ve, \Tilde u_\ve$ respectively the solutions to 
\bel{utuep}
\bega{rl}
(u_\ve)_t + f(u_\ve)_x~=~\ve (u_\ve)_{xx}\,,\qquad\qquad &u_\ve(0,x) = u(0,x),
\\[3mm]
(\Tilde u_\ve)_t + \Tilde f(\Tilde u_\ve)_x~=~\ve (\Tilde u_\ve)_{xx}\,,\qquad\qquad &\Tilde u_\ve(0,x) = \Tilde u(0,x).\enda\eeq
Recalling (\ref{Ue}), consider the corresponding rescaled solutions:
\bel{RUTU} 
\bega{l}U^\ve(t,x)~\doteq~\ve^{-1/4}\Big[ u_\ve\bigl( \tau+ \ve^{1/2}t, ~\ve^{3/4}x\bigr)\Big],
\\[3mm]
\Tilde U^\ve(t,x)~\doteq~\ve^{-1/4}\Big[ \Tilde u_\ve\bigl( \tau+ \ve^{1/2}t, ~\ve^{3/4}x\bigr)\Big].
\enda
\eeq
Observe that, in the rescaled coordinates, the domain $D^*$ is transformed into
\bel{D*ep}\D^*_\ve~\doteq~\Big\{ (t,x)\,;~~(\tau+\ve^{1/2} t,~\ve^{3/4} x) \in \D^*\Big\} .\eeq
The next lemma shows that the difference between 
the rescaled viscous solutions $U^\ve$ and $\Tilde U^\ve$ becomes arbitrarily small as 
$\ve\to 0$.  A very similar result can be found in Lemma 1 of the Appendix in \cite{SX}.

\begin{lemma}\label{l:91} In the above setting, as $\ve\to 0$ there holds
\bel{visconv}\lim_{\ve\to 0} ~\sup_{(t,x)\in \D^*_\ve} \Big| U^\ve(t,x) - \Tilde U^\ve(t,x)
\Big|~=~0.\eeq
\end{lemma}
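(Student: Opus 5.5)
The plan is to work in the original (unrescaled) coordinates on $[0,\tau']\times\R$. I will show that $u_\ve$ and $\Tilde u_\ve$ differ by at most $\O(1)\cdot\ve^{q}$ on $\D^*$ for some $q>1/4$, say $q=1$. Since $U^\ve-\Tilde U^\ve=\ve^{-1/4}(u_\ve-\Tilde u_\ve)$ evaluated at the rescaled points, and these points range over $\D^*$, this yields (\ref{visconv}). The mechanism is the one behind Lemma~\ref{l:62}: the two strips $\Gamma^\pm$ act as transversal barriers. I will use Lemma~\ref{l:62} in its unrescaled form, or equivalently rescale $(t,x)\mapsto(t/\ve,x/\ve)$ so that viscosity becomes one, the strips $\Gamma^\pm$ have width $3\delta/\ve=\ve^{-r}$-like size, and the time horizon is $\tau'/\ve\le\ve^{-n}$.

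\textbf{Step 1 (transversality for the viscous solutions).} Before any comparison, I need (\ref{cross}) with $u$ replaced by $u_\ve$ and $\Tilde u_\ve$, with a slightly smaller margin, say $\delta_0/2$. The inviscid solutions $u,\Tilde u$ coincide and are smooth on $\Tilde\D\cap\{t\le\tau'\}$ away from the shock-formation point. One may need to take $\tau'<\tau$, or to shrink $\Gamma^\pm$ so that they avoid the shock born at $(\tau,0)$, which moves with speed near $0$ while the strips sit at distance $\ge\delta$. Lemma~\ref{l:61}, applied on intervals slightly larger than the slices of $\Gamma^\pm$, then gives $|u_\ve-u|,|\Tilde u_\ve-\Tilde u|\le 2C_*\ve^{1/6}$ there. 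For $\ve$ small, this preserves the strict inequalities (\ref{cross}).

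\textbf{Step 2 (reduction to equal data and equal flux).} The two problems differ in both initial data and flux, and the flux difference is the point needing care. The fluxes agree for $|u|\le\epsilon_1$. I will first shrink $\delta$ so that $|u|,|\Tilde u|<\epsilon_1/2$ on $\Gamma^\pm\cup\D^*$; then by Step 1, $u_\ve$ and $\Tilde u_\ve$ also take values in $[-\epsilon_1,\epsilon_1]$ there. On that region both functions solve the same equation $w_t+f(w)_x=\ve w_{xx}$. I then run the argument of Lemma~\ref{l:62} on the region between the outer edges of $\Gamma^-$ and $\Gamma^+$. The difference $W=u_\ve-\Tilde u_\ve$ satisfies the linear equation (\ref{weq1}), with $A(u_\ve,\Tilde u_\ve)$ bounded by $\lambda-\delta_0/2$ on $\Gamma^-$, and symmetrically on $\Gamma^+$. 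At the outer edges only the crude bound $|W|\le C$ (a maximum-principle bound) is available.

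Next I compare the primitive $\Phi(t,x)=\int_{\text{edge}}^x W$ with an exponential barrier of the form (\ref{phi+}). The barrier decays like $e^{-(\delta_0/2)s/\ve}$ across the strip of width $3\delta$. The initial data agree on $\D^*\cup\Gamma^\pm$ at $t=0$, since (\ref{utu}) is propagated backward by the inviscid characteristics, which coincide there. This gives $|\Phi|\le C e^{-c\delta/\ve}$ at the inner edge and on $\D^*$. A Lipschitz bound then converts this into a pointwise bound: $|\partial_x u_\ve|\le C/\ve$ crudely, or $C$ on the smooth region. This is exactly Lemma~\ref{l:62}(iii) with $m=1$, and it gives $|W|\le C\ve^{q}$ on $\D^*$ for any $q$.

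\textbf{Main obstacle.} The real difficulty is the interplay between the region where $u_\ve$ stays in $[-\epsilon_1,\epsilon_1]$ and the barrier argument. The equation for $W$ is only linear with a common flux inside that region, so I must make sure the comparison domain never needs the flux outside it. I will handle this by working only on the trapezoid bounded by the outer edges of $\Gamma^\pm$. On the lateral boundary of this trapezoid I use the $L^\infty$ bound on $W$ as a Dirichlet datum for $\Phi$, rather than the decay at $x\to\pm\infty$ used in Lemma~\ref{l:62}. The secondary technical point is justifying Step 1 near time $\tau$, where Lemma~\ref{l:61} needs Lipschitz continuity of $u$ on the strips; this holds because the strips stay a fixed distance $\delta$ from the singular point.
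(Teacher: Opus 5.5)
\textbf{Your route versus the paper's.} Your route differs from the paper's. The paper never compares $u_\ve$ and $\Tilde u_\ve$ directly. It interpolates fluxes and data through $f^\theta=\theta\Tilde f+(1-\theta)f$ and bounds $V^{\ve,\theta}=\partial_\theta U^{\ve,\theta}$. This function solves a linear equation on the whole line whose initial data and source vanish on $\Tilde\D_\ve$. A barrier ``as in Lemma~\ref{l:62}(iii)'', in rescaled variables where the strips have width $\ve^{-3/4}\delta$, then gives $|V^{\ve,\theta}|\le C\ve$ on $\D^*_\ve$, and one integrates in $\theta$.

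You skip the homotopy. On the trapezoid $R$ bounded by the outer edges of the strips, both viscous solutions take values where $f=\Tilde f$, so $W=u_\ve-\Tilde u_\ve$ itself solves $W_t+(AW)_x=\ve W_{xx}$ there. This is legitimate and avoids the $\theta$-family. The homotopy needs the same input: item (ii) of the paper's proof requires $|u_{\ve,\theta}|\le\epsilon_1$ on $\Tilde\D$. Nor does the homotopy remove the need to localize, since neither $V^{\ve,\theta}$ nor $W$ decays as $|x|\to\infty$; the paper's one-line appeal to Lemma~\ref{l:62}(iii) hides this.

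Two small corrections:
\begin{itemize}
\item Step~1 (Lemma~\ref{l:61}) gives the smallness of $u_\ve,\Tilde u_\ve$ only on the strips. On $\D^*$, which contains the forming shock, you need the parabolic maximum principle on $R$, using small data on its base and Step~1 on its lateral sides.
\item Of your two options, $\tau'<\tau$ is not acceptable. With it, $\D^*_\ve$ would only reach rescaled times $t\le-\ve^{-1/2}(\tau-\tau')\to-\infty$, which is useless in Section~\ref{sec:10}.
\end{itemize}

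\textbf{The gap: the barrier step.} Let $a(t)$ be the left outer edge, moving with speed $\lambda$, and $\Phi(t,x)=\int_{a(t)}^x W(t,y)\,dy$. A direct computation gives
\[
\Phi_t+A\Phi_x-\ve\Phi_{xx}~=~g(t)~\doteq~\bigl(A(t,a(t))-\lambda\bigr)\,W(t,a(t))-\ve\,W_x(t,a(t)).
\]
The right-hand side is the flux of $W$ through the moving edge; it is of order one and not small. So $\Phi$ does not solve the homogeneous equation. It is also \emph{not} exponentially small on $\D^*$: there it contains the integral of $W$ over the left strip plus the accumulated flux $\int_0^t g$.

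An $\L^\infty$ bound on $W$ is Neumann, not Dirichlet, information for $\Phi$. Lemma~\ref{l:62} avoids the issue by anchoring at $+\infty$, where $W$ decays, and that is unavailable here. Even the bound $|W|=\O(\ve^{1/6})$ on the strips leaves $\Phi$ at best polynomially small on $\D^*$. Interpolation with $|W_x|\le C/\ve$ then gives nothing.

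\textbf{How to repair it.} The key is that $g$ depends on $t$ only. Set $\Psi\doteq\Phi-\int_0^t g(s)\,ds$. Then:
\begin{itemize}
\item $\Psi$ solves $\Psi_t+A\Psi_x=\ve\Psi_{xx}$ on $R$;
\item $\Psi$ vanishes at $t=0$, since $W(0,\cdot)=0$ on the base;
\item $\Psi$ is $\O(1)$ on both lateral sides.
\end{itemize}
Take $\kappa=\delta_0/2$. The transversality from Step~1 makes $Ce^{-\kappa(x-a(t))/\ve}$ a supersolution on the left strip and $Ce^{-\kappa(b(t)-x)/\ve}$ a supersolution on the right strip. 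Flatten them beyond the strips as in (\ref{phi+}) and add $Kt$ with $K$ exponentially small. This yields $|\Psi|\le Ce^{-c\delta/\ve}$ on $\D^*$.

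Since $W=\Psi_x$, interpolation with $|W_x|\le C/\ve$ then gives $|W|\le C\ve^q$ on $\D^*$ for every $q$. In short, only the oscillation of $\Phi$ across $\D^*$ is exponentially small, and that is exactly what you need.
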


{\bf Proof.} {\bf 1.} We first construct a homotopy between the inviscid solutions $u$ and $\Tilde u$, then a homotopy between the rescaled viscous solutions $U^\ve$ and $\Tilde U^\ve$.
Set 
\bel{ftheta} 
f^\theta(u)~\doteq~\ \theta \Tilde f(u) + (1-\theta) f(u),\eeq
and let $u^\theta$ be the solution to 
\bel{fthe}
u^\theta_t + f^\theta(u^\theta)_x
~=~0\qquad\qquad  u^\theta (0,x) ~=~\theta \Tilde u(0,x) 
+(1-\theta) u (0,x).  \eeq
Note that, for every $\theta\in [0,1]$, we have
\bel{utud*}
u^\theta(t,x)~=~u(t,x)~=~\Tilde u(t,x) \qquad\forall \quad (t,x)\in \Tilde \D.\eeq
In particular, equality holds on $\Gamma^+\cup\Gamma^-$. 
\v
{\bf 2.} Next, consider the corresponding viscous solutions $u_{\ve,\theta}$, which solve
\bel{fthep}
(u_{\ve,\theta})_t + f^\theta(u_{\ve,\theta})_x
~=~\ve(u_{\ve,\theta})_{xx}\qquad\qquad  u_{\ve,\theta} (0,x) ~=~u^\theta (0,x).  \eeq
By (\ref{utud*}), $u^\theta(t,x)$ is uniformly Lipschitz continuous on $\Gamma^+\cup\Gamma^-$.
Therefore, as $\ve\to 0$, by Lemma~\ref{l:61} the viscous approximations 
$u_{\ve,\theta}$  converge to $u^\theta$ uniformly on the subdomains
\bel{Gpm}\bega{l}
\Tilde \Gamma^+ ~=~\Big\{ (t,x)\,;~~t\in [0, \tau'], ~~x\in \bigl[ 2\delta- \lambda(t-\tau'), ~3\delta -  \lambda(t-\tau')\bigr] \Big\},\\[2mm]
\Tilde \Gamma^- ~=~\Big\{ (t,x)\,;~~t\in [0, \tau'], ~~x\in \bigl[ -3\delta+ \lambda(t-\tau'), ~-2\delta +  \lambda(t-\tau')\bigr] \Big\}.\enda
\eeq
As a consequence, for all $\ve>0$ small enough, a transversality condition similar to (\ref{cross}) remains valid
on these subdomains:
\bel{cross6}\left\{ \bega{cl} f'\bigl(u_{\ve,\theta}(t,x)\bigr) > - \lambda +\delta_0/2 
\qquad&\forall (t,x)\in \Tilde \Gamma^+,\\[2mm]
 f'\bigl(u_{\ve,\theta}(t,x)\bigr) <  \lambda -\delta_0/2 
\qquad&\forall (t,x)\in \Tilde\Gamma^-.\enda\right.\eeq
\v
{\bf 3.} We now consider the rescaled viscous solutions. Namely, we call 
$\Tilde U^\ve$ the function obtained by rescaling $\Tilde u_{\ve}$ and 
$U^{\ve,\theta}$ the function obtained as a rescaling of $u_{\ve,\theta}$.  Notice that 
$$U^{\ve,\theta}:\bigl[\tau_\ve, \tau'_\ve\bigr]\times\R\mapsto\R,$$
where
$$\tau_\ve\,\doteq-\ve^{-1/2}\tau,\qquad\qquad \tau'_\ve\,\doteq\,\ve^{-1/2}(\tau'-\tau).$$
Calling
\bel{feptheta} 
f_{\ve,\theta}(U)~\doteq~\ve^{-1/4} \Big[ \theta \Tilde f(\ve^{1/4} U) + (1-\theta) f(\ve^{1/4} U)
\bigr],\eeq
for $\theta\in [0,1]$ 
the $U^{\ve,\theta}$ satisfy
\bel{Uepthe}
U^{\ve,\theta}_t + f_{\ve,\theta}(U^{\ve,\theta})_x
~=~U^{\ve,\theta}_{xx}\,,\qquad \qquad U^{\ve,\theta} (\tau_\ve,x) ~=~\theta \Tilde U^\ve (\tau_\ve,x) 
+(1-\theta) U^\ve (\tau_\ve,x). \eeq
An estimate on the difference $\Tilde U^{\ve,\theta} - U^{\ve,\theta} $ will be obtained
by proving uniform bounds on 
$$V^{\ve,\theta}(t,x)~\doteq~{\partial\over\partial \theta} U^{\ve,\theta},\qquad\qquad \theta\in [0,1].$$
\v
{\bf 4.}
In view of (\ref{feptheta})-(\ref{Uepthe}),  $V^{\ve,\theta}$ satisfies the linearized equation
\bel{Vet}
V^{\ve,\theta}_t  +  f'_{\ve,\theta}(U^{\ve,\theta}) V^{\ve,\theta}_x
~=~V^{\ve,\theta}_{xx}-\left({\partial\over\partial\theta} f'_{\ve,\theta}(U^{\ve,\theta}) 
\right)V^{\ve,\theta}\,U^{\ve,\theta}_x\,,\eeq
with initial data at $t=\tau_\ve =-\ve^{1/2}\tau$
\bel{Ve0} V^{\ve,\theta} (\tau_\ve,x) ~=~ \Tilde U^\ve  (\tau_\ve,x)
-U^\ve (\tau_\ve,x) \,.\eeq
In the rescaled variables, the domain $\Tilde D$ is transformed into
\bel{DefDve}\Tilde\D_\ve~\doteq~\Big\{ (t,x)\,;~~(\tau+\ve^{1/2} t\,,\, \ve^{3/4} x)\in \Tilde \D\Big\}.\eeq
Similarly, we call $\Tilde \Gamma_\ve^+, \Tilde \Gamma^-_\ve\subset \Tilde\D_\ve$
the domains obtained from $\Tilde \Gamma^+, \Tilde \Gamma^-$, after the rescaling.
We now observe that
\begi
\item[(i)] At time $t=\tau_\ve$, the initial data $V^{\ve,\theta}$ vanishes on
$$\Tilde \D_\ve\cap\{(t,x)\,;~t=\tau_\ve\}.$$
\item[(ii)]  For all $t\in [\tau_\ve,\tau'_\ve]$, the source term ${\partial\over\partial\theta} f_{\ve,\theta}(U^{\ve,\theta}) $ vanishes on $\Tilde \D_\ve$.
\item[(iii)] By (\ref{cross6}), on $\Tilde \Gamma^+_\ve$ and on $\Tilde \Gamma^-_\ve$ the flux is strictly transversal:
\bel{cross5}\left\{ \bega{cl} (f^{\ve,\theta})'\bigl(U^{\ve,\theta}(t,x)\bigr) > - \lambda +\delta_0/2 
\qquad&\forall (t,x)\in \Tilde \Gamma^+_\ve,\\[2mm]
(f^{\ve,\theta})'\bigl(U^{\ve,\theta}(t,x)\bigr)<  \lambda -\delta_0/2 
\qquad&\forall (t,x)\in \Tilde\Gamma^-_\ve.\enda\right.\eeq
Moreover, after the rescaling the regions $\Tilde \Gamma^\pm_\ve$ have thickness
$\ve^{-3/4}\delta\to +\infty$.
\item[(iv)]  In the original coordinates, the functions $u_{\ve,\theta}$ are not bounded on the entire domain $[0,\tau]\times \R$.    Indeed, they grow as $|x|^{1/3}$ as 
$|x|\to \infty$.    However, all these functions remain uniformly bounded 
on the domain $\Tilde \D$ in (\ref{TDdef}).  
As a consequence, on the corresponding rescaled domains 
\bel{TDep}\Tilde \D_\ve~\doteq~\Big\{ (t,x)\,;~~(\tau+\ve^{1/2} t,~\ve^{3/4} x) \in \Tilde \D\Big\} \eeq
we have $\bigl|U^{\ve,\theta}(t,x)\bigr|\leq C\ve^{-1/4}$ for some constant $C$.
By (\ref{fthe}), this implies that both quantities
\bel{ftb}(f^{\ve,\theta})'(U^{\ve,\theta}),\qquad\qquad {\partial\over\partial\theta}(f^{\ve,\theta})'(U^{\ve,\theta}),\eeq
remain uniformly bounded on $\Tilde\D_\ve$.
\endi
By constructing upper and lower solutions, similarly as in the proof of part (iii) of Lemma~\ref{l:62},
we obtain
\bel{uvecnew}\sup_{(t,x)\in \D^*_\ve}
\bigl| V^{\ve,\theta}(t,x)\bigr|~\leq~C_q\,\ve^q,\eeq
for all $\theta\in [0,1]$ and every $\ve>0$ sufficiently small.
Here $q>0$ can be any exponent, and $C_q$ is a constant independent of $\ve,\theta$.
In particular, we can take $q=1$.  Integrating w.r.t.~$\theta$, we conclude
\bel{UTUes}\sup_{(t,x)\in \D^*_\ve}
\bigl| \Tilde U^\ve(t,x) - U^\ve(t,x)\bigr|~\leq~C_1\,\ve,\eeq
and hence (\ref{visconv}).
\endproof
\v

%
%

\section{Proof of Theorem~\ref{t:2} - the main argument}
\label{sec:10}
\setcounter{equation}{0}

After all these preliminaries, we now work out details, in several steps.  In view of Lemma~\ref{l:91}, w.l.o.g.~we 
can assume that the flux function $f$ and the profile of the inviscid solution at the 
time $\tau $ of shock formation satisfy (\ref{ibu3})--(\ref{f00}).   
\v
{\bf 1.} We begin by establishing {\bf (I)}. 
Consider the rescaled inviscid solutions $U^{(\ve)}$.  For $t\in [-t_\ve, 0]$, 
$t_\ve \doteq -\ve^{-\alpha}$, we denote by 
$U\mapsto X^{(\ve)}(t,U)$ the inverse function, so that 
$$U^{(\ve)} \bigl(t,  X^{(\ve)}(t,U)\bigr)~=~U.$$
By the assumptions (\ref{ufa})-(\ref{ifu}),   at the terminal time $t=0$
this inverse function   has the form
\bel{xfa}X^{(\ve)}(0, U)~=~ - U^3 + \O(1)\cdot \ve^{1/4} U^4.\eeq
Since $X^{(\ve)}_t(t,U) ~=~f'_\ve(U)$, we obtain
\bel{xu}X^{(\ve)}(t,U)~=~-  U^3 + \O(1)\cdot \ve^{1/4} U^4 + t U +\O(1) \cdot \ve^{1/4} t U^2, \eeq
where, in view of the simplifying assumptions {\bf (P1)-(P2)}, the terms
$\O(1)$ are uniformly bounded. Choosing $\epsilon_0, \epsilon_1>0$ small enough,
 as $t\to -\infty$we obtain the decay of the first derivatives
$${1\over t-3z^2}\,\leq\,z_x(t,x)~<\,0~\qquad\qquad {2\over t-3( U^{(\ve)})^2}\,\leq\, U^{(\ve)}_x(t,x)\,<\,0,\qquad\qquad t<0.$$
This yields
\bel{err3}\bega{ll}
\bigl|U^{(\ve)}(t,x) - z(t,x)\bigr| ~=~
\O(1)\cdot \ve^{1/4} \bigl( z^4 - t z^2\bigr)\cdot |z_x|\\[4mm]
\qquad \ds=~\O(1)\cdot \ve^{1/4}\,
{ z^4 - t z^2\over |t-3z^2|}~=~\O(1)\cdot (z^2-t)\,.\enda\eeq
Integrating at time $t_\ve = -\ve^{-\alpha}$, one obtains
\bel{inverr}\int_{|x|\leq \ve^{-\beta}} \Big| U^{(\ve)}(t_\ve , x) - z(t_\ve , x)\Big|\, dx~=~\O(1)\cdot \ve^{-\beta}  \ve^{1/4}\bigl[ \ve^{-2\beta} + \ve^{-\alpha}\bigr]
\eeq
This converges to zero as $\ve\to 0$ provided we choose $\alpha<\beta<3/20$.

\v

%
{\bf 2.} 
Having established the $\L^1$ convergence of rescaled inviscid solutions on the intervals  $I_\ve$, 
we now study the convergence of the corresponding viscous solutions, as claimed in 
{\bf (II)}.
By Remark~\ref{r:82} it follows that, for $t<0$,
\bel{zZL}
\bigl\| z( t,\cdot)-Z(t,\cdot)\bigr\|_{\L^\infty(\R)} ~\leq ~{2\over 3}\, |t|^{-3/2}.\eeq
When $t=t_\ve \doteq -\ve^{-\alpha}$ this  implies
\bel{zzint} \int_{|x|<\ve^{-\beta}} 
\bigl| z(t_\ve,x)-Z(t_\ve,x)\bigr|\, dx ~=~\O(1)\cdot \ve^{-\beta}\ve^{3\alpha/2}.\eeq
\v
%
We claim that a similar estimate  holds for the difference
between the rescaled inviscid solutions $U^{(\ve)}$
and the corresponding viscous solutions $U^\ve$.  
Repeating the computations at (\ref{xzz})-(\ref{zxx}) with slightly perturbed data,
setting $\bar x(u)\doteq x(\tau, u)$,
for the original solution of (\ref{1}) we obtain
\bel{xu2}\left\{\bega{rl} x(t,u)&=~(t-\tau) f'(u) + x(\tau, u),\\[1mm]
 x_u(t,u)&=~ (t-\tau) f''(u) +  x_u(\tau,u),\enda\right.\qquad
\qquad t\in [0,\tau].\eeq
In connection with the rescaling 
\bel{resc3}
u^\ve = \ve^{-1/4} u, \qquad x^\ve = \ve^{-3/4} x,\qquad t^\ve = \ve^{-1/2} (t-\tau),\eeq
we have
$$f'_\ve(u^\ve) = {dx^\ve\over dt^\ve}~=~{\ve^{-3/4}\over \ve^{-1/2}} {dx\over dt}~=~
\ve^{-1/4} f'(\ve^{1/4} u^\ve),$$
$$f''_\ve(u^\ve) ~=~{d\over du^\ve} f'_\ve(u^\ve)~=~ {d\over du^\ve}\Big(\ve^{-1/4} f'(\ve^{1/4} u^\ve)\Big)~=~f''(\ve^{1/4} u^\ve),$$
$$f'''_\ve(u^\ve) ~=~{d\over du^\ve} f''_\ve(u^\ve)~=~\ve^{1/4} f'''(\ve^{1/4} u^\ve).$$
The above relations imply
\bel{fep5}
f'_\ve(v)\,=\,v + \O(1)\cdot \ve^{1/4} v^2,
\qquad f''_\ve(v)\,=\,1 + \O(1)\cdot \ve^{1/4} v,\qquad f'''_\ve(v)\,=\, \O(1)\cdot \ve^{1/4}.
\eeq
Concerning the rescaled data at $t^\ve=0$ we have
$$\bar x^\ve(u^\ve)~=~\ve^{-3/4}\bar x(\ve^{1/4} u^\ve),
\qquad \bar x^\ve_u(u^\ve)~=~\ve^{-1/2} \bar x_u(\ve^{1/4} u^\ve),\qquad 
 \bar x^\ve_{uu}(u^\ve)~=~\ve^{-1/4} \bar x_{uu}(\ve^{1/4} u^\ve)$$
The above relations imply
\bel{xep}\bar x^\ve(v)~=~-v^3 +\O(1) \cdot \ve^{1/4} v^4,
\qquad \bar x^\ve_u(v)~=~-3v^2 +\O(1) \cdot \ve^{1/4} v^3,\eeq
\bel{xxep}
\bar x^\ve_{uu}(v)~=~-6v +\O(1) \cdot \ve^{1/4} v^2,\qquad \bar x^\ve_{uuu}(v)~=\,-6 +\O(1) \cdot \ve^{1/4} v.
\eeq
We now have (writing $t,v$ in place of $t^\ve, u^\ve$ to simplify notation)
$$x^\ve(t,v)~=~t f_\ve'(v) +\bar x^\ve(v)  ~=~ tv-v^3 +\ve^{1/4} \,\bigl[ \O(1)\cdot  tv^2 +\O(1)\cdot  v^4\bigr], $$
$$x^\ve_u(t,v)~=~t f_\ve''(v) +\bar x_u^\ve(v)  ~=~ t-3v^2 +\ve^{1/4} \,\bigl[ \O(1)\cdot  tv +\O(1)\cdot  v^3\bigr]. $$
Moreover, by (\ref{ibu2})--(\ref{f00}), we can assume the implications
\bel{easy}
\ve^{1/4} |v|~\geq~\epsilon_0\qquad \implies\qquad 
f'_\ve(v)~=~v,\qquad \bar x^\ve(v)~=~-v^3.\eeq

{}From (\ref{fep5}) and (\ref{xep}), for $t<0$  it thus follows
\bel{ux5}
\bega{l} \ds 0~>~U^\ve_x(t,v)~\ds=~{1\over x^\ve_u(t,v)}~=~{1\over t f''_\ve(v) + 
\bar x^\ve_u(v)}\\[4mm]
\quad \ds =~{1\over   t-3v^2 +\ve^{1/4} \cdot\bigl[ \O(1)\cdot  tv 
+\O(1)\cdot  v^3\bigr] }~
\ds \geq~{2\over  t-3v^2}\,.\enda\eeq
Indeed, by (\ref{easy}), if $|v|\geq \ve^{-1/4}\epsilon_0$ the last inequality is trivial.
On the other hand, if $|v|< \ve^{-1/4}\epsilon_0$, then 
\bel{1tv}\bega{l} \ve^{1/4} \cdot\Big| \O(1)\cdot  tv +\O(1)\cdot  v^3\Big| ~=~
\ve^{1/4} \cdot\Big| \O(1)\cdot  t\ve^{-1/4}\epsilon_0 -3 v^2\cdot \O(1)\cdot\ve^{-1/4}\epsilon_0  \Big| \\[3mm]
\qquad \ds =~|t-3v^2|\cdot \O(1)\cdot \epsilon_0~<~{1\over 2} |t-3v^2|,\enda
\eeq
provided that $\epsilon_0>0$ was chosen sufficiently small. This yields (\ref{ux5}).

Next, from
\bel{uxx6} 
U^\ve_{xx}(t,v)\,=\,- { t f'''_\ve(v) + \bar x^\ve_{uu}(v)\over\bigl(  tf''_\ve(v) + \bar x^\ve_u(v)\bigr)^3}\eeq
by (\ref{xxep}) and (\ref{ux5}) we obtain
\bel{uxx7}\bega{l} \ds \bigl| U_{xx}^\ve(t,v)\bigr|~\leq~8\cdot {\Big|-6v +\ve^{1/4} \bigl[ \O(1)\cdot t + \O(1)\cdot v^2\bigr]\Big|\over |t-3v^2|^3}\\[4mm]
\ds\qquad \leq ~{\Big| -48 v +\O(1)\cdot \epsilon_0 v\Big|\over  |t-3v^2|^3}
+ {\O(1)\cdot \ve^{1/4} \over  |t-3v^2|^2}\,.\enda
\eeq
For a suitable constant $C_3$, this yields
\bel{uxx8}
\bigl|U_{xx}^\ve(t,v)\bigr| ~\leq~C_3\Big(
|t|^{-5/2}  +  \ve^{1/4} |t|^{-2} \Big).
\eeq
For any $\ve>0$ sufficiently small, we claim that
\bel{Huu} \bigl|U^\ve(t,x)-U^{(\ve)} (t,x)\bigr|~\leq~C_4 \, \left( |t|^{-3/2}+\ve^{1/4}\,{\bigl|\ln |t|\bigr|+|\ln\ve|\over |t|}\right),\eeq
for some constant $C_4$  independent of  $\ve$.
The bound (\ref{Huu}) will be proved by constructing upper and lower solutions,
as in the proof of (\ref{Zbb}).
Observing that by (\ref{ux5})
\bel{Uxb}\bigl|U^\ve_x(t,x)\bigr|~\leq~2 |t|^{-1},\eeq
an upper solution to the equation
\bel{rveq} U^\ve_t + f'_\ve(U^\ve)_x ~=~U^\ve_{xx}\,,\qquad\qquad U^\ve(-\ve^{-1/2},x)~=~U^{(\ve)}(-\ve^{-\alpha},x),\eeq
 is obtained as follows. Recalling    (\ref{fB}), define
\bel{etaep}\sigma_\ve(t)~\doteq~C_f \int_{-\ve^{-1/2}\tau}^t C_3 \left( {2\over 3}|s|^{-3/2}+\ve^{1/4} |s|^{-1}\right) ds,\qquad\qquad C_f~\doteq~\max_{u} f''(u)~\leq~1+\epsilon_0,\eeq
and
set
$$U^+(t,x)~\doteq~U^{(\ve)} \bigl(t,x-\sigma_\ve(t)\bigr) +  C_3 \left( {2\over 3} |t|^{-3/2} + \ve^{1/4} |t|^{-1}\right). $$
To check that $U^+$ is indeed an upper solution, using (\ref{uxx8}) and (\ref{etaep})  and observing  that 
$$U^+_x(t,x) ~=~U^{(\ve)}_x  \bigl(t,x-\sigma_\ve(t)\bigr) ~<~0,$$ we compute
$$\bega{rl}U^+_t(t,x)&=~U^{(\ve)}_t \bigl(t, x-\sigma_\ve(t)\bigr) -\dot\sigma_\ve(t) U^{(\ve)}_x\bigl(t, x-\sigma_\ve(t)\bigr) + C_3 \Big(  |t|^{-5/2} + \ve^{1/4} |t|^{-2}\Big)
\\[2mm]
&\geq~-f'_\ve\left(U^+ -C_3 \Big({2\over 3}  |t|^{-3/2}+\ve^{1/4} |t|^{-1}\Big)\right) U^+_x  - \dot\sigma_\ve(t) U^+_x  + 
 U^+_{xx}\\[2mm]
&\geq ~-f'_\ve(U^+) U^+_x  - C_f C_3 \Big({2\over 3}  |t|^{-3/2}+\ve^{1/4} |t|^{-1}\Big)\  |U^+_x|   + \dot\sigma_\ve(t) |U^+_x | +  U^+_{xx}
\\[2mm]
&= ~-f'_\ve(U^+) U^+_x   +  U^+_{xx}\,.

\enda
$$
A lower solution is constructed in the same way.  
Recalling (\ref{Uxb}) and the definition of $\sigma_\ve$ at (\ref{etaep}), we have
$$\bega{l}\ds U^+(t,x) - U^{(\ve)}(t,x)~\leq~\sigma_\ve(t) \bigl\|U^{(\ve)}_x(t,\cdot)\bigr\|_{\L^\infty}
+ C_3 \left( {2\over 3} |t|^{-3/2} + \ve^{1/4} |t|^{-1}\right)\\[4mm]
\ds\qquad \leq ~C_4\left(  |t|^{-3/2} +\ve^{1/4}{\bigl|\ln|t|\bigr|+|\ln \ve|\over |t|}\right), \enda$$
and a similar estimate holds for the  lower solution.
This yields (\ref{Huu}).
\v
{\bf 3.} 
Putting together the three bounds (\ref{inverr}), (\ref{zzint}) and (\ref{Huu}), 
at time $t_\ve = -\ve^{-\alpha}$ we obtain
\bel{err6}\bega{l}\ds
\int_{|x|<\ve^{-\beta}} \bigl|U^\ve(t_\ve ,x)-Z(t_\ve ,x)\bigr|\, dx
~\leq~\int_{|x|<\ve^{-\beta}} \bigl|U^{(\ve)}(t_\ve ,x)- z( t_\ve ,x)\bigr|\, dx\\[4mm]
\ds \qquad +\int_{|x|<\ve^{-\beta}} \Big(\bigl|U^{(\ve)}(t_\ve ,x)-U^\ve(t_\ve ,x)\bigr| 
+\bigl|z(t_\ve ,x)-Z(t_\ve ,x)\bigr|\Big)\, dx\\[4mm]
~=~\O(1)\cdot \ve^{1/4} \ve^{-5\beta/3} + \O(1)\cdot \ve^{-\beta} \ve^{3\alpha/2} +
\O(1)\cdot \ve^{-\beta} \bigl( \ve^{3\alpha/2}+\ve^{1/4}\ve^\alpha |\ln \ve| \bigr).\enda\eeq
As $\ve\to 0$ all the integrals on the right hand side of (\ref{err6}) approach zero
provided we choose
\bel{choose}
\beta<{3\over 20}\,,\qquad\alpha>{2\beta\over 3}\,,\qquad \alpha>\beta-{1\over 4}\,.\eeq
To guarantee that the domains of dependence of the intervals $I_\ve$ in 
(\ref{Iepdef}) invade the entire $t$-$x$ plane, we shall also need $\alpha<\beta$.
Throughout the following, we thus choose the constants $\alpha,\beta$ so that 
\bel{abchose}
{2\beta\over 3}~<~\alpha~<~\beta~<~{3\over 25}\,.\eeq
 \v
{\bf 4.}  Next, we need to estimate the difference between the viscous solutions $U^\ve$ and $Z$ on the domain of dependency of the interval $I_\ve$ at (\ref{Iepdef}).  For this purpose, we can modify both of these functions at time $t_\ve= -\ve^{-\alpha}$, replacing them with smooth functions
$\Hat U^\ve(t_\ve,\cdot)$, $\Hat Z(t_\ve,\cdot)$, such that 
\bel{HUZ}
\left\{ \bega{rl} \Hat U^\ve(t_\ve,x)=U^\ve(t_\ve,x),\qquad \Hat Z(t_\ve,x)=Z(t_\ve,x)\quad &\hbox{if}\quad |x|\leq -\ve^{-\beta},\\[2mm]
\Hat U^\ve(t_\ve,x) ~=~ \Hat Z(t_\ve,x)~=~Z(t_\ve,2\ve^{-\beta})\qquad &\hbox{if}~~x\geq 2\ve^{-\beta},\\[2mm]
\Hat U^\ve(t_\ve,x) ~=~ \Hat Z(t_\ve,x)~=~Z(t_\ve, -2\ve^{-\beta})\qquad &\hbox{if}~~x\leq -2\ve^{-\beta},\enda\right.\eeq
In view of (\ref{err6}) we can arrange so that
\bel{err7}\int_{-\infty}^\infty \bigl|\Hat U^\ve(t_\ve,x)-\Hat Z(t_\ve,x)\bigr|\, dx~=~
\int_{|x|<2\ve^{-\beta}} \bigl|\Hat U^\ve(t_\ve,x)-\Hat Z(t_\ve,x)\bigr|\, dx~\to~0\eeq
as $\ve\to 0$.
For $t>t_\ve$, we shall denote $\Hat U^\ve(t, \cdot)$ and $\Hat Z(t,\cdot)$ the solutions to the 
viscous conservation laws
$$\Hat U^\ve_t + f'_\ve(\Hat U^\ve) \Hat U^\ve_x~=~\Hat U^\ve_{xx}\,,
\qquad\qquad \Hat Z_t + \Hat Z\Hat Z_x~=~\Hat Z_{xx}\,,$$
with the above initial data assigned at time $t=t_\ve$.

\v
{\bf 5.} To estimate the difference   $\bigl\|\Hat U^\ve(t, \cdot)-\Hat Z(t,\cdot)\bigr\|_{\L^1}$, we denote by $S$
the semigroup generated by the viscous Burgers' equation (\ref{Burv}).   This is well 
defined for all bounded measurable initial data 
and is contractive w.r.t.~the $\L^1$ distance. We use the error formula
\bel{L1e}\bega{l}\ds
\bigl\|\Hat U^\ve(t, \cdot)-\Hat Z(t,\cdot)\bigr\|_{\L^1}~\leq~\bigl\|\Hat U^\ve(t_\ve, \cdot)-\Hat Z(t_\ve,\cdot)\bigr\|_{\L^1}
+\int_{t_\ve}^t \lim_{h\to 0+} {1\over h} \Big\| \Hat U^\ve(\tau+h, \cdot) - S_h \Hat U^\ve(\tau,\cdot) \Big\|_{\L^1}
\, d\tau\\[4mm]
\qquad \doteq E_1+E_2(t)\,. \enda\eeq
For each $\tau\in [t_\ve, t]$, by (\ref{fep5}), recalling that $\Hat U^\ve(\tau,\cdot)$ is monotone decreasing and by (\ref{HUZ})
$$\bigl\| \Hat U(t_\ve,\cdot)\bigr\|_{\L^\infty} ~\leq~\Bigl|Z(t_\ve, 2\ve^{-\beta})\bigr|~=~(2\ve)^{1/3},$$
 the integrand function in (\ref{L1e}) can be  bounded as
\bel{err8}\bega{l}\ds\int \Big| f'_\ve\bigl( \Hat U^\ve(\tau,x)\bigr)- \Hat U^\ve(\tau,x)\Big| \, \bigl| \Hat U^\ve_x(\tau,x)\bigr|\, dx~
\leq~ \Big\| f'_\ve\bigl( \Hat U^\ve(\tau,\cdot)\bigr)- \Hat U^\ve(\tau,\cdot)\Big\|_{\L^\infty}\cdot \int \bigl| \Hat U^\ve_x(\tau,x)\bigr|\, dx \\[4mm]
~\leq~\ds \O(1)\cdot \ve^{1/4} \bigl\|\Hat U^\ve(\tau,\cdot)^2\bigr\|_{\L^\infty} \cdot  \bigl\|\Hat U^\ve(\tau,\cdot)\bigr\|_{\L^\infty}
~=~\O(1)\cdot \ve^{1/4}\cdot \ve^{-\beta}.
\enda
\eeq
The first term on the right hand side of (\ref{L1e}) is bounded by (\ref{err6}). 
In view of (\ref{err8}), for $t>t_\ve$ we have 
\bel{E12} E_2(t)~=~\O(1) \cdot (t-t_\ve) \ve^{1/4}\cdot \ve^{-\beta}~=~\O(1)\cdot  \ve^{-\alpha}\ve^{1/4}\cdot \ve^{-\beta},\eeq
which approaches zero as $\ve\to 0$.
\v
{\bf 6.} 
Finally, consider the trapezoidal domain (see Fig.~\ref{f:asy66})
\bel{Dedef2}\D_\ve~\doteq~\Big\{ (t,x)\,;~t\in [-\ve^{-\alpha}, \ve^{-\gamma}]\,,\qquad |x|\leq q_\ve(t)
\Big\},\eeq
\bel{qepdef}q_\ve(t)~\doteq~\ve^{-\beta} - {\ve^{-\beta}-2\ve^{-\delta}\over  \ve^{-\gamma}+\ve^{-\alpha} } (t+\ve^{-\alpha}),
\qquad\qquad \bar q_\ve \doteq q_\ve(0).\eeq
Here we choose
\bel{abcd}0\,<\,\gamma\,<\,\delta\,<\,\beta-\alpha,\qquad\qquad \gamma\,<\,{\beta-\alpha\over 4}\,.\eeq
We need to check that the differences 
\bel{epdif}\bega{l}\ds
e_1(t)~\doteq~\int_{|x|\leq q_\ve(t)-\ve^{-\delta}} |\Hat U^\ve(t,x) - U^\ve(t,x)\bigr|\, dx,\\[4mm]
\ds e_2(t)~\doteq~\int_{|x|\leq q_\ve(t)-\ve^{-\delta}}|\Hat Z^\ve(t,x) - Z(t,x)\bigr|\, dx\enda\eeq
both approach zero as $\ve\to 0$.  For this purpose, we shall use Lemma~\ref{l:62}, 
showing that characteristics move strictly outward from the domain $\D_\ve$.

For Burgers' equation (\ref{zbur}), the characteristic speed is simply $z$.  Moreover, we have
\bel{Zz}\bigl|\Hat Z(t,x)\bigr|~\leq~\bigl|Z(t,x)\bigr|~\leq~\bigl|z(t,x)\bigr|\qquad\hbox{ for all}~ t,x.\eeq
We compare the  the speed $\pm \lambda$ of the boundary of $\D_\ve$ with
speed of characteristics, in the strips
\bel{Depm}\bega{l}\ds\D_\ve^+~\doteq~\left\{ (t,x)\,;~t\in [-\ve^{-\alpha}, \ve^\gamma]\,,\qquad x\in \Big[q_\ve(t)-{\ve^{-\delta}}, ~q_\ve(t)\Big]
\right\},\\[4mm]
\ds\D_\ve^-~\doteq~\left\{ (t,x)\,;~t\in [-\ve^{-\alpha}, \ve^\gamma]\,,\qquad x\in \Big[-q_\ve(t),~-q_\ve(t)+{\ve^{-\delta}}\Big]
\right\}.\enda
\eeq

On $\D_\ve^+$, since $z(t,x)<Z(t,x) <\Hat Z(t,x)<0$, the speed of characteristics satisfies
\bel{cspeed}\bega{l}\Hat Z(t,x)~\geq~Z(t,x)~\geq~z(t,x)~\geq ~z(0,\bar q_\ve) ~=~-\bar q_\ve ^{1/3}~=~-\left( \ve^{-\beta} - {\ve^{-\beta}-2\ve^{-\delta}\over  \ve^{-\gamma}+\ve^{-\alpha} } \ve^{-\alpha}\right)^{1/3}\\[4mm]
\qquad \approx~- \Big( \ve^{-\beta} - \ve^{-\beta} (1-2\ve^{\beta-\delta}) (1-\ve^{\alpha-\gamma})\Big)^{1/3}
~\approx~-(2\ve^{-\delta}+ \ve^{\alpha -\beta-\gamma})^{1/3}.
\enda
\eeq
On the other hand, the speed of the boundary of $\D_\ve^+$ is  
\bel{bspeed}-\lambda~=~ - {\ve^{-\beta}-\ve^{-\delta}\over  \ve^{-\gamma}+\ve^{-\alpha} } 
~<~-{1\over 2} \ve^{\alpha-\beta}
\eeq
for all $\ve>0$ sufficiently small.
By construction, the thickness of the boundary layer $\D_\ve^+$ is $\ve^{-\delta}$. By the choice of $\delta$ at (\ref{abcd}), in view of (\ref{cspeed})-(\ref{bspeed}), for all
$\ve> 0$ sufficiently small we thus have
\bel{62}\bega{l}
{\hbox{[difference in speed]}\times\hbox{[thickness of the boundary layer]}}\\[4mm]
\qquad >~\left(-2\bigl(2\ve^{-\delta}+ \ve^{\alpha -\beta-\gamma}\bigr)^{1/3}+{1\over 2} \ve^{\alpha-\beta} \right)\ve^{-\delta}~\to~+\infty.\enda
\eeq
The same estimate holds for $\D_\ve^-$.
We can thus use Lemma~\ref{l:62} with $U=Z$, $V=\Hat Z$, $q>1$,  and conclude that 
\bel{lim14} 
\sup\left\{ \bigl|Z(t,x) - \Hat Z(t,x)\bigr| \,;~~t\in [-\ve^{-\alpha}, \ve^{-\gamma}],~~
|x|\leq q_\ve(t) - {\ve^{-\delta}\over 2}\right\}~\leq~\ve\eeq
for all $\ve>0$ sufficiently small.  This proves that the integral $e_2(t)$  in (\ref{epdif}) 
approaches zero as $\ve\to 0$,
for all $t\in [\ve^{-\alpha}, \ve^{-\gamma}]$.

 A similar argument shows that the same holds for the first integral $e_1(t)$.

\begin{figure}[ht]
\centerline{\hbox{\includegraphics[width=9cm]{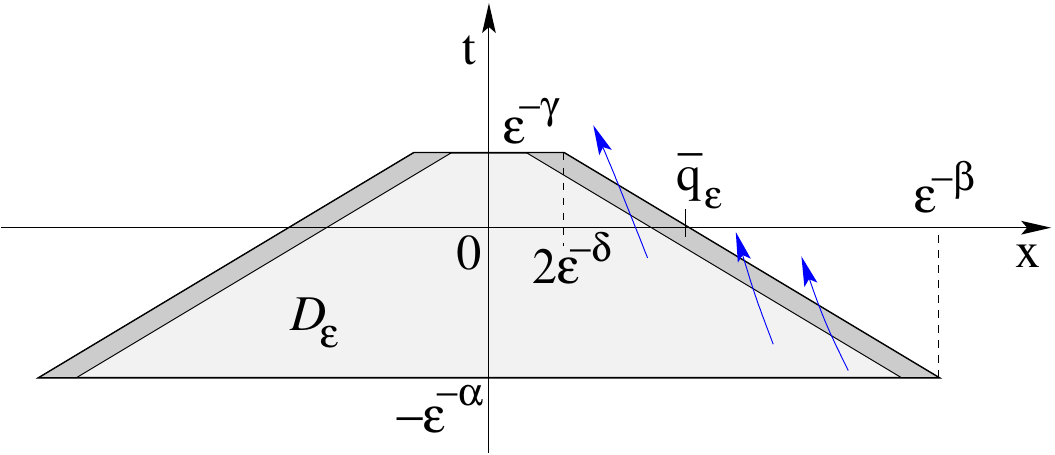}}}
\caption{\small The trapezoidal domain  $\D_\ve$ at (\ref{Dedef2}).  The darker shaded parallelograms
are the regions $\D_\ve^-, \D_\ve^+$ defined at (\ref{Depm}).}
\label{f:asy66}
\end{figure}

\v
{\bf 7.} We now argue as in the proof of Lemma~\ref{l:61}. 
Since the domains $\D_\ve$ invade the entire plane $\R^2$, and the functions $Z, U^\ve$ are uniformly Lipschitz continuous on bounded sets, the integral bounds 
\bel{difin} 
\sup_{t\in[ -\ve^{-\alpha}, \ve^{-\gamma}]}
\int_{|x|\leq q_\ve(t)-\ve^{-\delta}} | U^\ve(t,x)- Z(t,x)\bigr|\, dx ~\to ~0 \qquad\qquad \hbox{as} ~~\ve\to 0\eeq
yield the convergence $U^\ve(t,x)-Z(t,x)\to 0$, uniformly for $t,x$ in bounded sets.

This proves Theorem~\ref{t:2} in the case (\ref{uepzero}) where all solutions $u_\ve$
have the same initial data.
\v
{\bf 8.} To cover the general case, it suffices to observe that, if the high order convergence of the initial data (\ref{uelim2}) holds, 
then in the corresponding inviscid solutions $u^{(\ve)}$
a new singularity is formed at points $(\tau_\ve, \xi_\ve)\to (\tau,\xi)$.   Calling 
$u\mapsto x^{(\ve)} (\tau_\ve, u)$ 
a local inverse to the map $x\mapsto u^{(\ve)}(\tau_\ve,x)$, one has the convergence
relations
\bel{Tayk}u^{(\ve)}(\tau_\ve,\xi_\ve)\,\to\, u(\tau,\xi),\quad\qquad{\partial^k\over \partial u^k} x^{(\ve)} \Big(\tau_\ve, u^{(\ve)} (\tau_\ve, \xi_\ve)\Big) ~\to~{\partial^k\over\partial u^k}
x\bigl(\tau, u(\tau, \xi)\bigr), \quad k=1,2,3.\eeq
In this case, the previous analysis provides an estimate on the difference
between the rescaled functions 
$U^\ve$ in (\ref{resc22})  and the functions
$$\Phi_\ve(t,x)~=~c_\ve\, Z(\sigma_\ve t, \, x- \lambda_\ve t),$$
for suitable $(c_\ve, \sigma_\ve, \lambda_\ve)\to (c,\sigma,\lambda)$.  
This convergence is uniform on a family of compacts $D_\ve$ that invades $\R^2$.
Hence the conclusion.
\endproof
\v
\begin{remark}{\rm
In Theorem~\ref{t:2}  the convergence of rescaled solutions was proved assuming that the initial data
converge in $\C^4$.   This assumption simplifies some of the estimates, but is likely not needed.
Indeed, we expect that the same conclusion can be reached assuming that the convergence takes place locally 
in $\C^3$.

On the other hand, $\C^2$ convergence of the initial data $u_\ve(0,x)\to \bar u(x)$ as $\ve\to 0$ is not enough.
For example, let $u, v$ be the inviscid solutions of Burgers' equation where a shock forms at time $\tau=1$, such that
$$u(1,x)~=~-2 x^{1/3},\qquad\qquad v(1,x) = - x^{1/3}\,.$$
Let $\bar u(x)=u(0,x)$ and  $\bar v(x)=v(0,x)$ be the corresponding initial data.  

One can now construct a sequence of smooth initial data $\bar u_n$ such that 
$$\bigl\|\bar u_n- \bar u \bigr\|_{\C^2}~\to~0,\qquad\qquad \bar u_n(x)=\bar v(x)\quad\hbox{for}~|x|\leq {1\over n}\,.$$
By the previous analysis, for  each fixed $n\geq 1$, the rescaled viscous solutions $U^\ve_n$ converge to the function $Z$ constructed in Lemma~\ref{l:82} uniformly on bounded sets.
Hence, taking a suitable sequence $\ve_n\to 0$, we still have the convergence
$U^{\ve_n}_n(t,x)\to Z(t,x)$ uniformly on bounded sets.
However, the limit of the rescaled solutions $U^\ve$ does not converge to $Z$.
}\end{remark}

\v
{\small
{\bf Acknowledgments.} The research of A.\,Bressan was partially supported by NSF with
grant  DMS-2306926, ``Regularity and approximation of solutions to conservation laws".

L. Caravenna would like to thank the Italian Ministry of University and Research (MUR) for the project PRIN PNRR P2022XJ9SX of the European Union – Next Generation EU "Heterogeneity on the road—modeling, analysis, control", and the GNAMPA group of the "Istituto Nazionale di Alta Matematica F. Severi". She also acknowledges the hospitality of Penn State University, where this work was initiated.}


\begin{thebibliography}{6111}

\bibitem{ACG} J.\,Anderson, S.\,Chaturvedi and C.\,Graham,
Shock formation in 1D conservation laws II: vanishing viscosity.
Preprint 2025. arXiv:2506.17156.



\bibitem{Bi} S.\,Bianchini, Hyperbolic limit of the Jin-Xin relaxation model.
{\it Comm. Pure Appl. Math.} {\bf 59} (2006), 688--753.


\bibitem{BiB02}
S.\,Bianchini and A.\,Bressan, On a Lyapunov functional relating shortening curves and viscous conservation laws, {\it 
Nonlinear Analysis T.M.A.} {\bf 51} (2002),  649--662.

\bibitem{BiB05}
S.\,Bianchini and A.\,Bressan, 
Vanishing viscosity solutions of nonlinear hyperbolic systems,
{\it Annals of Mathematics}  {\bf 161} (2005), 223--342.

\bibitem{Bbook} A.\,Bressan, \textit{Hyperbolic systems of conservation
    laws.  The one-dimensional Cauchy problem}. Oxford University
  Press, Oxford, 2000.

\bibitem{BC17} A.\,Bressan and G.\,Chen,
Generic regularity of conservative solutions to a nonlinear wave equation.
{\it Ann. Inst. H. Poincar\'e}  {\bf 34} (2017),  335--354.

\bibitem{BD} A.\,Bressan and C.\,Donadello,
 On the formation of scalar viscous shocks,
{\it Int. J. Dynam. Syst.  Differ. Equat.}
{\bf 1}  (2007), 1--11.

\bibitem{BD2} A.\,Bressan and C.\,Donadello, On the convergence of viscous approximations
after shock interactions. {\it Discr. Cont. Dynam. Syst.}
{\bf 23} (2009),
29--48.

\bibitem{BGu} A.\,Bressan and G.\,Guerra,
Shift differentiability of the flow generated by a conservation
law, {\it Discrete Cont. Dynamical Systems} {\bf 3} (1997), 35--58.


\bibitem{BHY} A.\,Bressan, T.\,Huang and F.\,Yu,
 Structurally stable singularities 
for a nonlinear   wave equation. {\it Bull. Inst. Math. Acad. Sinica},
{\bf 10} (2015), 449--478.

\bibitem{BMa} A.\,Bressan and A.\,Marson,
 A variational calculus for discontinuous
solutions of conservative systems, {\it Comm. Part. Diff. Equat.} {\bf 20}
(1995), 1491--1552.


\bibitem{BS2} A.\,Bressan and W.\,Shen,
On traffic flow with nonlocal flux: a relaxation  representation, 
{\it Arch. Rational Mech. Anal.} {\bf 237} (2020), 1213--1236.



\bibitem{CG} S.\,Chaturvedi and C.\,Graham,
The inviscid limit of viscous Burgers at nondegenerate shock formation.
{\it Annals of PDE}  (2023) 9:1.

\bibitem{CDS}  M.\,Chiani, D.\,Dardari and M.\,K.\,Simon, New exponential bounds and approximations for the computation of error probability in fading channels. {\it  IEEE Transactions on Wireless Communications}
{\bf 2} (2003)  840--845. 

\bibitem{CJL}
J.\,Choi, C-Y.\,Jung
and 
H.\,Lee,
On boundary layers for the Burgers equations in a bounded domain
{\it Comm. Nonlin. Sci. Num. Simul.}
{\bf 67} (2019), 637--657.

\bibitem{CCS}
M.\,Colombo, G.\,Crippa,  and  L\,V.\,Spinolo,
On the singular local limit for conservation laws with nonlocal fluxes.
{\it Arch.~Rational Mech.~Anal.} {\bf 233} (2019),  1131--1167.


\bibitem{Dafermos} C.\,Dafermos, {\it Hyperbolic Conservation Laws in Continuum Physics}, 4-th ed., Springer-Verlag, Berlin, 2016.


\bibitem{F}
 P.~C.~Fife, {\it
Mathematical Aspects of Reacting
and Diffusing Systems.}  
Lecture Notes in Biomathematics, Springer, 1979.


\bibitem{FS}
H.\,Freist\"uhler and D.\,Serre,  
$L^1$ stability of shock waves in scalar viscous conservation laws.
{\it Comm. Pure Appl. Math.} {\bf 51} (1998),  291--301.

\bibitem{GX} J.\,Goodman and Z.\,Xin, 
Viscous limits for piecewise smooth solutions to systems of conservation laws.
{\it Arch. Rational Mech. Anal.} {\bf 121} (1992), 235--265.

\bibitem{GS} M.\,Golubitsky and D.\,Schaeffer,
 Stability of shock waves for a single conservation law.
{\it Adv. Math.} {\bf 15} (1975),  65-71.

\bibitem{Gk}
J.\,Guckenheimer, Solving a single conservation law. In
``Dynamical systems - Warwick 1974'', pp. 108--134.
Springer Lecture Notes in Math, {\bf 468}.

\bibitem{HR}
H.\,Holden and N.\,Risebro, 
{\it Front Tracking for Hyperbolic Conservation Laws.} Second Edition.  Springer-Verlag, Berlin, 2015.

\bibitem{KP} \newblock A.\,Keimer and L.\,Pflug, 
\newblock{On approximation of local conservation laws by nonlocal conservation laws.}
\newblock\emph{J. Math. Anal. Appl.} {\bf 475} (2019), 1927--1955.


\bibitem{Kr} S.\,Kruzhkov, First order quasilinear equations with several space
variables, {\it Math. USSR Sbornik} {\bf 10} (1970), 217--243.



\bibitem{KT} A.\,Kurganov and E.\,Tadmor,
Stiff systems of hyperbolic conservation laws: convergence and error estimates.
{\it SIAM J. Math. Anal.} {\bf 28}  (1997), 1446--1456.


\bibitem{LV} R.\,J.\,LeVeque,
{\it 
Numerical Methods for Conservation Laws.}
Birkh\"auser, Basel, 1992.

\bibitem{LZ} M.\,Li and Q.\,Zhang, 
Generic regularity of conservative solutions to Camassa-Holm type equations.
{\it SIAM J. Math. Anal.} {\bf 49} (2017), 2920--2949.

\bibitem{Liu}
T.\,P.\,Liu, Hyperbolic conservation laws with relaxation.
{\it Commun. Math. Phys.} {\bf 108} (1987),153--175.

\bibitem{Liubook} T.\,P.\,Liu, {\it Shock Waves.} American Math. Soc., Providence, RI, 2021.



\bibitem{S} D.\,Schaeffer, A regularity theorem for conservation laws,
{\it Adv. in Math.} {\bf  11}
(1973), 368--386.

\bibitem{SX}
 W.\,Shen and Z.\,Xu, Vanishing viscosity approximations to hyperbolic conservation laws,
 {\it  J. Differential  Equations} {\bf 244} (2008), 1692–1711.

\bibitem{VH}
 A.\,Van Harten and R.\,R.\,Van Hassel,  A quasilinear, singular perturbation problem of hyperbolic type, 
 {\it SIAM J. Math. Anal.} {\bf 16} (1985), 1258--1267.

\bibitem{Y} Wen-An Yong,
Singular perturbations of first-order hyperbolic systems with stiff source terms,
{\it J. Differential Equations} {\bf  155} (1999), 89--132.

%

\end{thebibliography}
\end{document}